\numberwithin{equation}{section}
\NewDocumentMathCommand\odv{m}{\frac{\text{d}}{\text{d}{#1}}}
\NewDocumentMathCommand\pdv{m}{\frac{\partial}{\partial{#1}}}
\newtheorem{theorem}{Theorem}[section]
\newtheorem{proposition}[theorem]{Proposition}
\newtheorem{corollary}[theorem]{Corollary}
\newtheorem{lemma}[theorem]{Lemma}
\newtheorem{definition}[theorem]{Definition}
\newtheorem{remark}[theorem]{Remark}
\def\N{\mathbb{N}}
\def\Z{\mathbb{Z}}
\def\Q{\mathbb{Q}}
\def\C{\mathbb{C}}
\def\Res{{\rm Res}}
\def\wt{{\rm wt}}
\def\de{\delta}
\def\dim{{\rm dim}}
\def\ker{{\rm ker}}
\def\Hom{{\rm Hom}}
\def\lf{\lfloor}
\def\rf{\rfloor}
\def\of{\overline}
\def \<{\left\langle}
\def \>{\right\rangle}
\def\la{\lambda}
\newmathcommand\vac{\mathbbm{1}}
\declaremathcommand\A{\mathcal{A}}
\declaremathcommand\M{\mathcal{M}}
\declaremathcommand\iu{\mathbbm{i}}
\begin{document}
\title[Bimodules and twisted fusion rules theorem]{Bimodules over twisted Zhu algebras and twisted fusion rules theorem for vertex operator algebras}
\subjclass[2020]{17B69}
\keywords{vertex operator algebra, Zhu algebra, twisted module, fusion rule}
\author{Yiyi Zhu}
\address{Department of Mathematics, South China University of Technology, 381 Wushan Road, Guangzhou, 510641, Guangdong, China}
\email{yzhu51@ucsc.edu}

\begin{abstract}
\noindent
Let $V$ be a strongly rational vertex operator algebra, and let $g_1, g_2, g_3$ be three commuting finitely ordered automorphisms of $V$ such that $g_1g_2=g_3$ and $g_i^T=1$ for $i=1, 2, 3$ and $T\in \N$. Suppose $M^1$ is a $g_1$-twisted module. For any $n, m\in \frac{1}{T}\N$, we construct an $A_{g_3, n}(V)$-$A_{g_2, m}(V)$-bimodule $\mathcal{A}_{g_3, g_2, n, m}(M^1)$ associated to the quadruple $(M^1, g_1, g_2, g_3)$. Given an $A_{g_2, m}(V)$-module $U$, an admissible $g_3$-twisted module $\mathcal{M}(M^1, U)$ is constructed. For the quadruple $(V, 1, g, g)$ with some finitely ordered $g\in \text{Aut}(V)$, $\mathcal{A}_{g, g, n, m}(V)$ coincides with the $A_{g, n}(V)$-$A_{g, m}(V)$-bimodules $A_{g, n, m}(V)$ constructed by Dong-Jiang, and $\mathcal{M}(V, U)$ is the generalized Verma type admissible $g$-twisted module generated by $U$. When $U=M^2(m)$ is the $m$-th component of a $g_2$-twisted module $M^2$ for some $m\in\frac{1}{T}\N$, we show that the submodule of $\M(M^1, M^2(m))$ generated by the $m$-th component satisfies the universal property of the tensor product of $M^1$ and $M^2$. Using this result, we obtain a twisted version of Frenkel-Zhu-Li's fusion rules theorem.    
\end{abstract}

\maketitle

\section{Introduction}
In the representation theory of vertex operator algebras, the Zhu algebra \cite{Z96} and its related bimodule theory \cite{FZ92} have played an important role. There are deep connections between the representation of a vertex operator algebra and the corresponding associative Zhu algebra. For example, there is a correspondence between the modules for a vertex operator algebra and the modules for the corresponding Zhu algebra, and one can use bimodules over the Zhu algebra to compute fusion rules of a vertex operator algebra, i.e., the so-called fusion rules theorem.
There are many generalizations of the Zhu algebra $A(V)$ and its bimodule $A(M)$ theory. Dong, Li, and Mason generalized Zhu algebra theory in two directions. In one direction, for $n\in \N$, they defined a series of associative algebras $A_n(V)$ \cite{DLM98b} with $A_0(V)$ being identical to $A(V)$. They proved that a vertex operator algebra $V$ is rational if and only if $A_n(V)$ is semisimple for all $n\in \N$. In the other direction, for an automorphism $g$ of $V$ with finite order $T$, they defined a twisted Zhu algebra $A_g(V)$ to study the twisted representation theory of $V$ \cite{DLM98a}. They also defined higher twisted Zhu algebras $A_{g, n}(V)$ for $n\in \frac{1}{T}\N$ later in \cite{DLM98c}, and proved that $V$ is $g$-rational if only if $A_{g, n}(V)$ are semisimple for all $n\in \frac{1}{T}\N$. As for bimodule theory, Dong and Ren defined a series of $A_n(V)$-bimodules $A_n(M)$ for $n\in \N$ and proved a similar fusion rules theorem in terms of these higher Zhu algebras and their bimodules. A special case of twisted version of bimodule theory was studied in \cite{JJ16}, where for an untwisted module $M$ and a finitely ordered automorphism $g$ of $V$, a series of $A_{g, n}(V)$-bimodules $A_{g, n}(M)$ for $n\in\frac{1}{T}\N$ was constructed. A more general case was partially studied in \cite{Z23}, where for a quadruple $(M, g_1, g_2, g_3)$, an $A_{g_3}(V)$-$A_{g_2}(V)$-bimodule $A_{g_3, g_2}(M)$ was constructed. Here, $M^1$ is a $g_1$-twisted module, $g_1, g_2, g_3$ are three finitely ordered, mutually commuting automorphisms of $V$ satisfying $g_3=g_1g_2$. 

A vertex operator module map $Y_M(-, z)$, or more generally, an intertwining operator $I(-, z)$ among $M^1, M^2$ and $M^3$, involves infinitely many components, i.e., the coefficients of $Y(a, z)$ or $I(v, z)$ for $a\in V, v\in M^1$. The results mentioned above were built upon the set of all weight-zero components. There are many more components with nonzero weight. Instead of just focusing on weight-zero components, considering all of the components should give us more information about the representations of $V$. In \cite{DJ08a}, Dong and Jiang developed a more general bimodule theory. For $m, n\in \N$, they constructed an $A_{n}(V)$-$A_{m}(V)$-bimodule $A_{n, m}(V)$ associated to $V$. Roughly speaking, $A_{n, m}(V)$ was built upon the set of components with weight $n-m$ generated by the vertex operator module map $Y_M(-, z)$. When $n=m$, the algebra $A_n(V)$ was recovered. Moreover, by putting components with all possible weights together, they gave another construction of the generalized Verma type module $\overline{M}(U)$ generated by an $A_m(V)$-module $U$, which was initially constructed in \cite{DLM98c}. Later, in \cite{DJ08b}, the authors generalized the $A_{n, m}(V)$ theory to the $A_{g, n, m}(V)$ theory with $A_{1, n, m}(V)=A_{n, m}(V)$. This type of bimodule theory was generalized to $A_{n, m}(M)$ for an untwisted module $M$ in \cite{JZ19} by considering components of intertwining operators $I(-, z)$.

The first goal of this paper is to develop a more general bimodule theory over twisted Zhu algebras. For each quadruple $(M^1, g_1, g_2, g_3)$, where $g_1, g_2, g_3$ are three finitely ordered, mutually commuting automorphisms of $V$ satisfying $g_i^T=1$ for $i=1, 2, 3$ and $g_1g_2=g_3$, and $M^1$ is a $g_1$-twisted module, we are going to construct an $A_{g_3, n}(V)$-$A_{g_2, m}(V)$-bimodule $\mathcal{A}_{g_3, g_2, n, m}(M^1)$ for $n, m\in \frac{1}{T}\N$. In this paper, we do this for a strongly rational vertex operator algebra. Many known examples of vertex operator algebras are strongly rational, see Proposition \ref{prop:rationality}. Under this assumption on $V$, any admissible $g$-twisted module has a grading given by $L_{(0)}$-spectrum. We show that the associative algebra $\mathcal{A}_{g, g, n, n}(V)$ associated to the quadruple $(V, 1, g, g)$ coincides with $A_{g, n}(V)$ constructed in \cite{DLM98c}, and the $A_{g, n}(V)$-$A_{g, m}(V)$-bimodule $\mathcal{A}_{g, g, n, m}(V)$ associated to the quadruple $(V, 1, g, g)$ coincides with $A_{g, n, m}(V)$ constructed in \cite{DJ08b}. For a quadruple $(M^1, g_1, g_2, g_3)$ and an $A_{g_2, m}(V)$-module $U$, we show that 
\[\mathcal{M}(M^1, U)=\oplus_{n\in \frac{1}{T}\N}\mathcal{A}_{g_3, g_2, n, m}(M^1)\otimes_{A_{g_2, m}(V)} U\]
is an admissible $g_3$-twisted module. For the special quadruple $(V, 1, g, g)$, we show that $\mathcal{M}(V, U)$ is the Verma type admissible $g$-twisted module $\overline{M}(U)$. 

One key step in constructing the Zhu algebra bimodule theory is determining the left and right actions. Unlike \cite{DJ08a, DJ08b}, where one product $\ast_{g, n, m}$ is enough to give both left and right actions, in our case, we need to find two, see \eqref{eq:left-action} and \eqref{eq:right-action}. 

The second goal of this paper is to give a twisted Frenkel-Zhu-Li's fusion rules theorem. The fusion rules theorem in the untwisted case was proposed in \cite{FZ92} without a proof. The theorem states that if $M^i, i=1, 2,3$ are three $V$-modules, $\mathcal{I}\binom{M^3}{M^1 M^2}$ is the space of all intertwining operators among them, then 
\begin{equation*}
    \mathcal{I}\binom{M^3}{M^1 M^2}\cong \Hom_{A(V)}\left(A(M^1)\otimes_{A(V)} M^2(0), M^3(0)\right).
\end{equation*}
Li gave the first proof in \cite{Li99a}, and he pointed out that for this theorem to be true, some conditions must be satisfied. Li then developed the regular representation theory for vertex operator algebras in \cite{Li02} and gave a more conceptual proof of the fusion rules theorem in \cite{Li01}. Another proof was given in \cite{Liu23} fulfilling Frenkel-Zhu's argument. In the twisted case, the corresponding twisted fusion rules theorem is still open. We do this by first constructing the tensor product for two twisted modules in terms of the bimodule theory established in this paper. In the category of untwisted modules, Huang and Lepowsky developed a tensor product theory for vertex operator algebras in \cite{HL95a, HL95b, HL95c, HL95d}. They proved that the module category for a vertex operator algebra $V$ is a modular tensor category under some convergence conditions of $V$. Li gave a construction of the tensor product for two untwisted $V$-modules in \cite{Li98}. In his construction, Li used the representation of the associated Lie algebra $g(V)$ of $V$. Li showed that the bottom level of the tensor product $M^1\boxtimes M^2$ he constructed is $A(M^1)\otimes_{A(V)} M^2(0)$. Using this result together with the universal property of the tensor product, one can immediately prove the fusion rules theorem. This inspires our bimodule approach in this paper. Suppose $g_1, g_2$ are two commuting automorphisms of $V$ with finite order, $M^1$ is an irreducible $g_1$-twisted module, and $M^2$ is an irreducible $g_2$-twisted module. Let $T_{m}(M^1, M^2)$ be the submodule generated by the $m$-th component of $\M\left(M^1, M^2(m)\right)$. We prove that there is a natural twisted intertwining operator $F(-, z)$ among $M^1, M^2$ and $T_{m}(M^1, M^2)$
and the pair $\left(T_{m}(M^1, M^2), F(-, z)\right)$ satisfies the universal property in the definition of the tensor product of two twisted modules. It is well-known that a vertex operator algebra has features of both Lie algebras and associative algebras. Li's approach is Lie-theoretical, while the approach in this paper is associative-theoretical. Using our construction of the tensor product of two twisted modules, we give a twisted version of the fusion rules theorem, namely, for any $g_3$-twisted module $M^3$, we have 
\begin{equation}
        \mathcal{I}\binom{M^3}{M^1M^2}\cong\Hom_{A_{g_3, m}(V)}\left(\mathcal{A}_{g_3, g_2, m, m}(M^1)\otimes_{A_{g_2, m}(V)}M^2(m), M^3(m)\right),
    \end{equation}
see Theorem \ref{thm:twisted-fusion-rule}.

This paper is organized as follows: In Section \ref{sec:prelims}, we present some basics of vertex operator algebra theory; In Subsection \ref{subsec:bimodule construction}, we give our first main result, i.e., we construct a series of $A_{g, n}(V)$-$A_{g, m}(V)$-bimodules $\mathcal{A}_{g_3, g_2, n, m}(M^1)$ associated to the quadruple $(M^1, g_1, g_2, g_3)$ for $n, m\in \frac{1}{T}\N$. In Subsection \ref{subsec:O}, we discuss the construction of $\mathcal{A}_{g_3, g_2, n, m}(M^1)$ in more detail, and explain the connection between our construction and some known results; In Section \ref{sec:properties}, we present some properties of $\mathcal{A}_{g_3, g_2, n, m}(M^1)$; Finally, in Section \ref{sec:tensor-product}, we give our second and third main results, i.e., one construction of tensor product of two twisted modules and a twisted version of the fusion rules theorem. 

\section{Preliminaries}\label{sec:prelims}
We refer to \cite{B86, FHL93, LL04} for the basics of vertex operator algebra theory.

Let $(V, Y, \vac, \omega)$ be a vertex operator algebra, $g$ a finite automorphism of $V$ such that $g^{T}=1$ for some $T\in \N$. Then \[V=\oplus_{r=0}^{T-1} V^r, \text{\quad where } V^r=\{a\in V\mid ga=e^{\frac{2\pi \iu r}{T}}a\}.\]
\subsection{Twisted module}
\begin{definition}\cite{DLM98a}\label{def:g-twisted-module}
    A weak $g$-twisted $V$-module $M$ is a vector space equipped with a linear map $Y_M$: $V \mapsto$ (End$M$)\{$z$\} such that: 
    \begin{itemize}
        \item For $a\in V^r, v\in M$, $Y_M(a, z)v=\sum_{n\in\frac{r}{T}+\Z}a_{(n)}vz^{-n-1}$ and $a_{(n)}v=0$ if $n\gg0$;
        \item $Y_M(\vac, z)=id_M$;
        \item For $a\in V^r, v \in M$,
        \[z_0^{-1}\delta(\frac{z_1-z_2}{z_0})Y_M(a, z_1)Y_M(b, z_2)-z_0^{-1}\delta(\frac{-z_2+z_1}{z_0})Y_M(b, z_2)Y_M(a, z_1)\]
        \[=z_1^{-1}\delta(\frac{z_2+z_0}{z_1})(\frac{z_2+z_0}{z_1})^{\frac{r}{T}}Y_{M}(Y(a, z_0)b, z_2).\]
    \end{itemize}
\end{definition}
\begin{definition}
    A weak $g$-twisted $V$-module $M$ is called admissible if the following hold:
    \begin{itemize}
        \item $M=\oplus_{n\in\frac{1}{T}\N} M(n)$;
        \item For homogeneous $a\in V$, $a_{(n)}M(m)\subseteq M(m+\wt u-n-1)$. 
    \end{itemize}
\end{definition} 
\begin{definition}
    A $g$-twisted $V$-module $M$ is a weak $g$-twisted $V$-module carrying a $\C$-grading such that the following hold:
    \begin{itemize}
        \item $M=\oplus_{\lambda\in \C} M_{\lambda}$ with \dim$M_{\lambda}<\infty$, and for any $\lambda_{0}\in \C$,  $M_{\lambda+\frac{n}{T}}=0$ for all sufficiently negative integers $n$;
        \item $M_{\lambda}=\{v\in M \mid L_{(0)}v=\lambda v\}$, where $L_{(0)}$ is the component operator in $Y_{M}(\omega, z)=\sum_{n\in \Z}L_{(n)}z^{-n-2}$.
    \end{itemize}
\end{definition}
A $g$-twisted $V$-module is called an ordinary $g$-twisted module. Let $M=\oplus_{\lambda\in \C}M_{\lambda}$ be any $g$-twisted $V$-module. For any complex number $\alpha\in \C$, set $M^{\alpha}=\oplus_{n\in \frac{1}{T}\Z}M_{\alpha+n}$. Then $M^{\alpha}$ is a submodule of $M$ and $M^{\alpha}=M^{\beta}$ if and only if $\alpha-\beta\in \frac{1}{T}\Z$, and $M$ has a decomposition 
\begin{equation}
    M=\oplus_{\alpha\in \C/\frac{1}{T}\Z}M^{\alpha}.
\end{equation}
For each $M^{\alpha}$, we may assume $M^{\alpha}=\oplus_{n\in\frac{1}{T}\N}M_{\alpha+n}$ by choosing representative properly.
Set 
\begin{equation}
    M(n)=\oplus_{\alpha\in \C/\frac{1}{T}\Z}M_{\alpha+n}\quad  \text{for}\ n\in \frac{1}{T}\N.
\end{equation}
Then $M=\oplus_{n\in \frac{1}{T}\N}M(n)$.
Moreover, a $g$-twisted $V$-module $M$ is admissible. We call $M(n)$ the degree $n$ subspace of $M$, and define 
\begin{equation}
    \deg v=n\quad \text{for any} \ v\in M(n).
\end{equation}
We call $M_{\alpha+n}$ the subspace of weight $\alpha+n$, and define 
\begin{equation}
    \wt v=\alpha+n \quad \text{for}\ v\in M_{\alpha+n}.
\end{equation}
In particular, for an irreducible $g$-twisted module $M$, there exists a $h\in \C$, called the conformal weight of $M$, such that 
\begin{equation}
    M=\oplus_{n\in \frac{1}{T}\N}M_{h+n}=\oplus_{n\in\frac{1}{T}\N}M(n).
\end{equation}

$V$ is called $g$-rational if the category of admissible $g$-twisted module is semisimple. When $g=1$, we say $V$ is rational. $V$ is said to be $C_2$-cofinite if $V/C_2(V)$ is finite dimensional, where 
\begin{equation}
    C_2(V)=\{a_{(-2)}b: a, b\in V\}.
\end{equation} 
It was proved in \cite{DLM98a} that for a $g$-rational vertex operator algebra $V$, it only has finitely many irreducible admissible $V$-modules up to isomorphism and any irreducible admissible module is a module. The following result has been proved in \cite{CM16}, also in \cite{ADJR18} under slightly different assumptions.
\begin{proposition}\label{prop:rationality}
    If $V$ is a strongly rational vertex operator algebra, then $V$ is $g$-rational for any $g\in \text{Aut}(V)$ with finite order. Moreover, $V$ is $g$-regular in the sense that any weak $g$-twisted module is a direct sum of irreducible $g$-twisted modules.
\end{proposition}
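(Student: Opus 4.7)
The strategy is to reduce to the untwisted theory for the fixed-point subalgebra $V^G$ with $G=\langle g\rangle$. Since $V$ is strongly rational, a Carnahan--Miyamoto type orbifold theorem guarantees that $V^G$ is again strongly rational; in particular $V^G$ is rational and $C_2$-cofinite, so by the Abe--Buhl--Dong regularity theorem every weak $V^G$-module is a direct sum of irreducibles.

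First I would restrict any weak $g$-twisted $V$-module $M$ along $V^G\hookrightarrow V$. For $a\in V^G\subseteq V^0$ the series $Y_M(a,z)$ only involves integer powers of $z$, and the twisted Jacobi identity of Definition~\ref{def:g-twisted-module} specializes to the untwisted Jacobi identity on $V^G$, so $M$ is a weak $V^G$-module. Regularity of $V^G$ then yields a decomposition $M=\bigoplus_i W_i$ into irreducible $V^G$-modules, and the $L_{(0)}$-gradings on the $W_i$ assemble into the $\frac{1}{T}\N$-grading exhibiting $M$ as admissible. In particular, the categories of weak and of admissible $g$-twisted $V$-modules coincide, so $g$-regularity and $g$-rationality will follow from the same argument.

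Next I would promote this to a decomposition into irreducible $g$-twisted $V$-modules using the induction/restriction correspondence between $V^G$-modules and $g$-twisted $V$-modules developed by Dong--Mason and Miyamoto--Tanabe. The full $V$-action regroups the summands $W_i$ into finitely many $V$-isotypic components, each of which, via this correspondence, is identified with a direct sum of copies of a single irreducible $g$-twisted $V$-module. This simultaneously yields $g$-rationality (semisimplicity of the admissible category) and $g$-regularity (semisimplicity of the weak category).

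The hard part is the orbifold input itself: establishing that $V^G$ inherits strong rationality from $V$. This is the substantive content of \cite{CM16}, with \cite{ADJR18} providing an alternative argument under slightly different assumptions; it requires a delicate induction on $|G|$ preserving $C_2$-cofiniteness, rationality, and the CFT-type hypothesis simultaneously, together with a detailed analysis of the monodromy of $V$-modules and quantum Galois theory. A secondary, more technical issue is verifying that the induction functor from $V^G$-modules to $g$-twisted $V$-modules sends irreducibles to direct sums of irreducibles, which is precisely what organizes the $W_i$ into a $V$-submodule decomposition; this is where one must invoke the Schur-type finiteness of multiplicities guaranteed by rationality of $V^G$.
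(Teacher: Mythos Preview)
The paper does not prove this proposition at all: it simply records the statement and attributes it to \cite{CM16} and \cite{ADJR18}, treating it as a black-box input for the rest of the paper. Your proposal therefore goes well beyond what the paper does, by sketching the actual mechanism behind those references.

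Your outline is essentially the correct strategy used in the literature: pass to the fixed-point subalgebra $V^G$, invoke the orbifold theorem to conclude $V^G$ is strongly rational (hence regular), restrict a weak $g$-twisted $V$-module to $V^G$ to force it to be ordinary and completely reducible over $V^G$, and then reassemble into irreducible $g$-twisted $V$-modules. You also correctly flag that the substantive difficulty is entirely in the orbifold step \cite{CM16}, while the remaining passage from $V^G$-regularity to $g$-regularity is comparatively formal. One small sharpening: once $M$ is known to be an ordinary (i.e.\ $\C$-graded, lower-truncated) $g$-twisted module via the $V^G$-restriction, the decomposition into irreducible $g$-twisted $V$-modules can be obtained directly from the $A_g(V)$-theory and finiteness of $\mathrm{Irr}_g(V)$, without needing the full induction/restriction formalism you mention; but either route works.
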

In this paper, we will always assume $V$ is strongly rational.

Let $M=\oplus_{\lambda\in\C}M_{\lambda}$ be a $g$-twisted module. The contragredient module $M'$ is defined as follows:
\[
M'=\oplus_{\lambda\in\C}M_{\lambda}^*,
\]
the vertex operator $Y_M'(a, z)$ is defined by 
\[
\langle Y_M'(a, z)f, v\rangle = \langle f, Y_{M}(e^{zL(1)}(-z^{-2})^{L(0)}a, z^{-1})v\rangle
\]
for $a\in V$, $f\in M'$ and $v\in M$. One can prove the following (cf. \cite{DLM98a}, \cite{FHL93}, \cite{X95}):
\begin{theorem}\label{thm:contragredient}
    $(M', Y_M')$ is a $g^{-1}$-twisted module and $(M'', Y_M'')=(M, Y_M)$. $M$ is irreducible if and only if $M'$ is irreducible. 
\end{theorem}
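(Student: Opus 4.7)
The plan is to verify the three assertions in turn, following the standard contragredient construction for twisted modules \cite{FHL93, X95, DLM98a}. Writing $\Delta(a, z) := e^{zL(1)}(-z^{-2})^{L(0)} a$, I would first check the grading and vacuum axioms. Since $\omega \in V^0$, the operators $L(0)$ and $L(1)$ preserve the decomposition $V = \oplus_r V^r$, so for $a \in V^r$ the element $\Delta(a, z)$ remains in $V^r[z, z^{-1}]$. Hence $Y_M(\Delta(a, z), z^{-1})$ has modes indexed by $n \in r/T + \Z$, and tracking the powers of $z$ in $Y_M'(a, z)$ shows that its modes have $z$-degree in $-r/T + \Z$, which is exactly the grading expected when $a$ is viewed as an element of the $(T-r)$-th eigenspace of $g^{-1}$. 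The vacuum axiom follows immediately from $Y_M(\vac, z^{-1}) = \id_M$.

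The main technical step, and the hardest part of the proof, is verifying the twisted Jacobi identity for $Y_M'$. The strategy is to apply both sides to $f \in M'$, pair with $v \in M$, and reduce to the Jacobi identity for $Y_M$. Using
\[
\langle Y_M'(a, z_1) Y_M'(b, z_2) f, v\rangle = \langle f, Y_M(\Delta(b, z_2), z_2^{-1}) Y_M(\Delta(a, z_1), z_1^{-1}) v\rangle
\]
together with the analogous expressions for the opposite ordering and for $Y_M'(Y(a, z_0) b, z_2)$, the twisted Jacobi identity for $Y_M$ on $V^r$ translates, after the substitutions $z_i \mapsto z_i^{-1}$ and the associated $\delta$-function identities, into the desired identity for $Y_M'$. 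The key subtlety is that these substitutions convert the factor $\bigl(\tfrac{z_2+z_0}{z_1}\bigr)^{r/T}$ into $\bigl(\tfrac{z_2+z_0}{z_1}\bigr)^{(T-r)/T}$ on the dual side, which is exactly the correct twist for $g^{-1}$ since $V^r$ is the $(T-r)$-th eigenspace of $g^{-1}$. This is the twisted analogue of the computation in \cite{FHL93}, and I would largely mimic it, citing \cite{X95, DLM98a} for the twisted signs.

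For the double-dual claim $(M'', Y_M'') = (M, Y_M)$, finite-dimensionality of each $M_\lambda$ gives $M_\lambda^{**} \cong M_\lambda$ canonically, so $M'' = M$ as a graded vector space. To match vertex operators I would compute
\[
\langle Y_M''(a, z) v, f\rangle = \langle f, Y_M(\Delta(\Delta(a, z), z^{-1}), z) v\rangle
\]
and invoke the involution identity $\Delta(\Delta(a, z), z^{-1}) = a$, i.e., $e^{z^{-1} L(1)} (-z^2)^{L(0)} e^{z L(1)} (-z^{-2})^{L(0)} = \id$ on $V$, established in \cite{FHL93}. Finally, for the irreducibility equivalence, the map $N \mapsto N^\perp := \{f \in M' : \langle f, N\rangle = 0\}$ gives an inclusion-reversing bijection between graded submodules of $M$ and graded submodules of $M'$, with inverse again given by orthogonal complement, because each $M_\lambda$ is finite-dimensional and the definition of $Y_M'$ as a transpose sends submodules to submodules. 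Consequently $M$ is irreducible if and only if $M'$ is.
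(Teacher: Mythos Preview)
The paper does not supply its own proof of this theorem: it is stated with the parenthetical ``cf.\ \cite{DLM98a}, \cite{FHL93}, \cite{X95}'' and no argument is given. Your proposal is exactly the standard contragredient argument from those references, so there is nothing to compare against; your sketch is correct and matches what one finds in \cite{FHL93} for the untwisted case and in \cite{X95, DLM98a} for the twisted adaptation.
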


\subsection{Twisted Zhu algebras}
We recall the $A_{g, n}(V)$ theory developed in \cite{DLM98c}. For any $s\in \Z$, let $\of{s}$ be the residue of $s$ modulo $T$. For any $x\in \frac{1}{T}\Z$, let 
\begin{equation}\label{eq: Tilde-number}
    \Tilde{x}:=\of{Tx}.
\end{equation} 
Then $0\leq \Tilde{x}<T$ and for any $x\in \frac{1}{T}\Z$, we have \[x=\lf x \rf+\frac{\Tilde{x}}{T}.\]
For $i, r, t\in \Z$ and $x\in \frac{1}{T}\Z$, let 
    $\de_{i}(r)=1$ if $i\geq r$, otherwise $\de_{i}(r)=0$.
Let
\begin{equation}\label{eq:lambda-number}
    \la_{t}(x, r):=-1+\lf x \rf +\de_{t+\Tilde{x}}(r)+\de_{t+\Tilde{x}-T}(r)+\frac{r}{T}.
\end{equation}
Let
\begin{equation}
    \la(x, r)=-1+\lf x \rf +\de_{\Tilde{x}}(r)+\frac{r}{T}.
\end{equation}
For $n\in \frac{1}{T}\N$ and a weak $g$-twisted module $M$, let 
\begin{equation}
    \Omega_n(M)=\{v\in M \mid z^{\la(n, r)}Y_{M}(z^{L_{(0)}}a, z)v\in M[[z]] \text{ for } a\in V^{r}, \N\ni r\in [0, T)\},
\end{equation}
and let
\begin{equation}
    o^{M}(a):=\Res_{z}z^{\wt a-1}Y_{M}(a, z)
\end{equation}

Let $O_{g, n}(V)$ be the linear span of all $a\circ_{g, n} b$ and $L_{(-1)}a+L_{(0)}a$, where 
\begin{equation}
    a\circ_{g, n} b=\Res_z\frac{(1+z)^{\la(n, r)}}{z^{2\lf n \rf+\delta_{\Tilde{n}}(r)+\delta_{\Tilde{n}}(T-r)+1}}Y\left((1+z)^{L_{(0)}}a, z\right)b\quad \text{for}\ a\in V^r \ \text{and}\ b\in V.
\end{equation}
Define a bilinear product $\ast_{g, n}$ on $V$ by
\begin{equation}
    a\ast_{g, n}b=\sum_{j=0}^{\lf n \rf}(-1)^j\binom{\lf n \rf+j}{j}\Res_z\frac{(1+z)^{\lf n \rf}}{z^{\lf n \rf+j+1}}Y\left((1+z)^{L_{(0)}}a, z\right)b \quad \text{for}\ a\in V^r \ \text{and}\ b\in V.
\end{equation}
From the definition, we can see that $a\ast_{g, n}b=0$ if $a\in V^{r}$ for some $r\neq 0$.

\begin{proposition}\cite{DLM98c}\label{prop:o-vanishing}
    Let $M$ be a weak $g$-twisted $V$-module. Then for $a, b\in V$ and $v\in \Omega_n(M)$, 
    \begin{equation}
        o^{M}(a\ast_{g, n}b)v=o^{M}(a)o^{M}(b)v, 
    \end{equation}
and 
    \begin{equation}
        o^{M}(a)v=0 \quad \text{ for } a\in O_{g, n}(V).    
    \end{equation}
\end{proposition}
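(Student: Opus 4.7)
The plan is to deduce both assertions from the twisted Jacobi identity in Definition \ref{def:g-twisted-module} combined with the defining condition of $\Omega_n(M)$, handling the two generators of $O_{g,n}(V)$ separately and then deriving the multiplicativity of $o^M$ as a consequence. By bilinearity of $Y$ I may assume $a, b$ are homogeneous throughout. The generators of the form $L_{(-1)}c + L_{(0)}c$ are immediate: since $Y_M(L_{(-1)}c, z) = \partial_z Y_M(c, z)$ and $Y_M(L_{(0)}c, z) = (\wt c) Y_M(c, z)$ for homogeneous $c$, integration by parts inside the residue yields $\Res_z z^{\wt c}\partial_z Y_M(c, z) = -(\wt c)\Res_z z^{\wt c - 1} Y_M(c, z)$, which cancels the $L_{(0)}$ contribution, so $o^M(L_{(-1)}c + L_{(0)}c) = 0$ with no appeal to $\Omega_n$.

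For $a \in V^r$, $b \in V$, I would then show $o^M(a \circ_{g,n} b) v = 0$ as follows. Expand the residue defining $o^M(a \circ_{g,n} b) v$ into a double residue involving $Y_M(Y(a, z_0) b, z_2) v$, then apply the iterate side of the twisted Jacobi identity to rewrite this as a difference of $Y_M(a, z_1) Y_M(b, z_2) v$ and $Y_M(b, z_2) Y_M(a, z_1) v$ integrated against $z_1^{-1}\delta((z_2 + z_0)/z_1)\left((z_2 + z_0)/z_1\right)^{r/T}$. The exponents $2\lfloor n \rfloor + \delta_{\tilde n}(r) + \delta_{\tilde n}(T - r) + 1$ and $\lambda(n, r)$ in the definition of $\circ_{g,n}$ are engineered precisely so that, after expanding the delta function, every surviving mode $a_{(k)}$ either acts on $v$ with $k > \lambda(n, r) + \wt a - 1$ or is absorbed into an analogous action on $Y_M(b, z_2) v$ killed by the same bound after shifting by $\wt b$; both possibilities are forbidden by $v \in \Omega_n(M)$.

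For the multiplicativity $o^M(a \ast_{g,n} b) v = o^M(a) o^M(b) v$, I would start from the right-hand side written as the double residue $\Res_{z_1}\Res_{z_2}\, z_1^{\wt a - 1} z_2^{\wt b - 1} Y_M(a, z_1) Y_M(b, z_2) v$, apply the Jacobi identity via a further $\Res_{z_0}$ against the kernel $\sum_{j=0}^{\lfloor n \rfloor}(-1)^j \binom{\lfloor n\rfloor + j}{j}(1 + z_0)^{\lfloor n\rfloor + \wt a} z_0^{-\lfloor n\rfloor - j - 1}$---precisely the generating function built into the definition of $\ast_{g,n}$---and identify the iterate term as $o^M(a \ast_{g,n} b) v$. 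The remaining commutator contributions are of the form $o^M(c) v$ with $c \in O_{g,n}(V)$ and vanish by the previous step. The case $r \neq 0$, where $a \ast_{g,n} b = 0$, is captured automatically: the same kernel forces the iterate to land in $O_{g,n}(V)$ whenever the fractional power $((z_2 + z_0)/z_1)^{r/T}$ is nontrivial, so both sides vanish.

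The main obstacle is the careful bookkeeping of the fractional twist factor $((z_2 + z_0)/z_1)^{r/T}$ from the Jacobi identity together with the step-function corrections $\delta_{\tilde n}(r)$ and $\delta_{\tilde n}(T - r)$ appearing in the definitions, which together distinguish the integer and fractional parts of $\lambda(n, r)$. These corrections are tuned exactly to the boundary of the $\Omega_n(M)$ vanishing region, and verifying that the bounds line up---especially at the edge cases $r = 0$ and $r = \tilde n$, where the step functions jump---is the delicate combinatorial core of the argument; once it is in place the remainder reduces to the standard Zhu and Frenkel-Zhu manipulations adapted to the twisted setting.
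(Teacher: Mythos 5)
The paper does not prove this proposition; it is stated with a citation to \cite{DLM98c}, so there is no in-paper argument for you to match. Your sketch nonetheless reconstructs the standard strategy of that reference: integration by parts for the $L_{(-1)}c+L_{(0)}c$ generators, and the twisted Jacobi identity combined with the vanishing encoded in $\Omega_n(M)$ for the $\circ_{g,n}$ generators and for the multiplicativity. The outline is correct.

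Two remarks. First, your handling of the $r\neq 0$ case is more roundabout than it needs to be, and as stated it does not quite close the loop: you argue the iterate ``lands in $O_{g,n}(V)$,'' but what you actually need is that both sides of the multiplicativity identity vanish. The clean observation is that for homogeneous $a\in V^r$ with $r\neq 0$, the operator $o^M(a)=\Res_z z^{\wt a-1}Y_M(a,z)$ is identically zero on $M$, because $\wt a-1\in\Z$ while the modes of $Y_M(a,z)$ are indexed by $\frac{r}{T}+\Z$; hence $o^M(a)o^M(b)v=0$, and $o^M(a\ast_{g,n}b)v=0$ since $a\ast_{g,n}b$ is zero by convention, with no appeal to $O_{g,n}(V)$ needed. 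Second, what you describe as ``the delicate combinatorial core'' --- verifying that the exponent $2\lfloor n\rfloor+\delta_{\tilde n}(r)+\delta_{\tilde n}(T-r)+1$ in the denominator and $\la(n,r)+\wt a$ in the numerator of $\circ_{g,n}$ align exactly with the cutoff $a_{(k)}v=0$ for $k>\la(n,r)+\wt a-1$ coming from $\Omega_n(M)$, and likewise for the kernel defining $\ast_{g,n}$ --- is precisely where the proof lives. Your sketch asserts that these bounds ``are tuned exactly to the boundary'' without exhibiting the inequality chase, so it remains a plan rather than a proof; a complete version would need to carry out that expansion of the twisted Jacobi identity term by term.
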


Set $A_{g, n}(V)=V/O_{g, n}(V)$ and denote the image of $a\in V$ in $A(V)$ by $[a]$. Then we have the following results \cite{DLM98c}:
\begin{theorem}\label{thm:twistedZhu}
    Let $(V, Y, \vac, \omega)$ be a vertex operator algebra and $g$ a finite automorphism of $V$ such that $g^T=1$. Let $M=\bigoplus_{i\in \frac{1}{T}\N}M(i)$ be an admissible $g$-twisted module of $V$. Then for $n\in \frac{1}{T}\N$,
    \begin{enumerate}
        \item $A_{g, n}(V)$ is an associative algebra with respect to the product $\ast_{g, n}$. $[\vac]$ is the unit and $[\omega]$ lies in its center;
        \item The identity map on $V$ induces an algebra epimorphism from $A_{g, n}(V)$ to $A_{g, n-\frac{1}{T}}(V)$;
        \item $\Omega_n(M)$ is an $A_{g, n}(V)$-module with $[a]$ acting as $o^{M}(a):=a_{(\wt a-1)}$ for homogeneous $a$;
        \item Each $M(i)$ for $i\leq n$ is an $A_{g, n}(V)$-submodule of $\Omega_n(M)$. Furthermore, if $M$ is irreducible, then $\Omega_{n}(M)=\oplus_{i=0}^{n}M(i)$; 
        \item If $U$ is an $A_{g, n}(V)$-module not factor through $A_{g, n-\frac{1}{T}}(V)$, then there is a Verma type  admissible $g$-twisted $V$-module $\overline{M}(U)=\bigoplus_{i\in \frac{1}{T}\N}\overline{M}(U)(i)$ generated by $U$ such that $\overline{M}(U)(n)=U$ and $\overline{M}(U)(0)\neq 0$. Moreover, for any weak $g$-twisted module $W$ and any $A_{g, n}(V)$-module homomorphism $f$ from $U$ to $\Omega_n(W)$, there is a unique $V$-module homomorphism from $\overline{M}(U)$ to $W$ which extends $f$. Furthermore, there exists an admissible $V$-module $L(U)$ which is a quotient of $\overline{M}(U)$ satisfying that $L(U)(0)\neq 0$, $L(U)(n)=U$ and $L(U)$ is an irreducible admissible $V$-module if and only if $U$ is an irreducible $A_{g, n}(V)$-module; 
        \item $V$ is $g$-rational if and only if $A_{g, n}(V)$ are finite dimensional semisimple algebras for all $n\in \frac{1}{T}\N$;
        \item If $V$ is $g$-rational, then there are only finitely many irreducible  admissible $g$-twisted $V$-modules up to isomorphism and each irreducible admissible $g$-twisted $V$-module is ordinary;  
        \item The linear map $\phi$ from $V$ to $V$ defined by $\phi(a)=e^{L(1)}(-1)^{L(0)}a$ for $a\in V$ induces an anti-isomorphism from $A_{g, n}(V)$ to $A_{g^{-1}, n}(V)$.
    \end{enumerate}
\end{theorem}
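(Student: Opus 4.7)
The plan is to follow Zhu's original template adapted to the twisted higher-level setting, carrying through the bookkeeping of the exponents $\la(n,r)$ and $\Tilde{n}$ that record the fractional grading. I would organize the proof into three blocks: parts (1)--(4) on the algebra $A_{g,n}(V)$ and its action on $\Omega_n(M)$; parts (5)--(7) on the representation-theoretic consequences centered on the Verma-type construction; and part (8) as an independent symmetry check.

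For (1), the main work is showing that $O_{g,n}(V)$ is a two-sided ideal for $\ast_{g,n}$ and that $\ast_{g,n}$ is associative modulo $O_{g,n}(V)$. Both reduce, after extracting residues, to the twisted Jacobi identity from Definition \ref{def:g-twisted-module} applied to rational test functions of the form $(1+z_1)^{\alpha}(1+z_2)^{\beta}z_2^{-k}$ with exponents specialized to $\lf n \rf$ or $\la(n,r)$. The unital property and the centrality of $[\omega]$ are then routine given $L_{(-1)}a+L_{(0)}a\in O_{g,n}(V)$. Part (2) is a direct comparison of the closed formulas for $a\circ_{g,n}b$ at successive values of $n$, showing that the extra factor lies in $O_{g,n-1/T}(V)$. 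For parts (3) and (4), Proposition \ref{prop:o-vanishing} already produces the action of $A_{g,n}(V)$ on $\Omega_n(M)$; the inclusion $M(i)\subseteq \Omega_n(M)$ for $i\leq n$ is a weight count using admissibility, and for irreducible $M$ one argues that any homogeneous vector of degree strictly greater than $n$ violates the polynomiality condition defining $\Omega_n(M)$.

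The genuinely delicate step is (5). I would first build a candidate $\overline{M}(U)$ by taking the tensor product of $U$ with the universal enveloping algebra of a twisted affinization of $V$, graded by $\tfrac{1}{T}\N$, and then quotient by the relations imposed by the twisted Jacobi identity together with the $A_{g,n}(V)$-action at degree $n$. The real obstacle is to prove the non-collapse statements $\overline{M}(U)(n)=U$ and $\overline{M}(U)(0)\neq 0$, i.e.\ that no spurious relation kills the seed layer. I would handle this by constructing, for each $A_{g,n}(V)$-module $U$, a test admissible module whose degree-$n$ component contains $U$ faithfully (for instance by embedding $U$ into $\Omega_n$ of an induced representation of some larger object), and then invoking the universal property to get maps in both directions that force the seed to be preserved. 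The irreducible quotient $L(U)$ is then defined as the quotient of $\overline{M}(U)$ by the maximal submodule meeting $\overline{M}(U)(n)$ trivially. Parts (6) and (7) become formal consequences of (5): the functors $M\mapsto \Omega_n(M)$ and $U\mapsto L(U)$ set up the semisimplicity equivalence on both sides, while finite-dimensionality of $A_{g,n}(V)$ together with Proposition \ref{prop:rationality} bounds the number of irreducibles.

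Finally, part (8) follows by checking that the adjoint-operator identity
\[\langle Y_M'(a,z)f, v\rangle = \langle f, Y_M(e^{zL_{(1)}}(-z^{-2})^{L_{(0)}}a, z^{-1})v\rangle\]
from Theorem \ref{thm:contragredient} transports $\ast_{g,n}$ into the opposite of $\ast_{g^{-1},n}$ after the change of variable $z\mapsto -1-z^{-1}$, a calculation that mirrors the untwisted argument with only notational changes to accommodate $\la(n,r)$. The main obstacle is thus really part (5): the combinatorics of the twisted relations and the non-collapse of the Verma seed are the only places where one must genuinely produce a module rather than manipulate formal identities.
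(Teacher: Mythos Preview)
Your proposal is a reasonable sketch of the standard argument, but there is nothing to compare it against: the paper does not prove this theorem at all. The statement is introduced with ``Then we have the following results \cite{DLM98c}'' and is quoted as background from Dong--Li--Mason's paper on higher-level twisted Zhu algebras; no proof or even proof sketch appears in the present paper. So the ``paper's own proof'' is simply a citation.

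That said, your outline is broadly in line with how \cite{DLM98c} (and its predecessors \cite{DLM98a}, \cite{DLM98b}) actually proceed. One remark on your treatment of (5): your proposed construction of $\overline{M}(U)$ via a universal enveloping algebra of a twisted affinization and your strategy for non-collapse (embedding $U$ into $\Omega_n$ of some auxiliary module) is vaguer than what is actually done in the literature. In \cite{DLM98c} the Verma module is built from the twisted Lie algebra $\hat{V}[g]$ and the non-collapse $\overline{M}(U)(n)\cong U$ is proved by an explicit PBW-type filtration argument together with a direct check that the relations imposed at level $n$ do not propagate downward; there is no appeal to a ``test module'' with a faithful seed. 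Your suggested route would require independently producing such a test module, which is essentially as hard as building $\overline{M}(U)$ itself, so that step of your plan is circular as written.
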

When $n=0$, $A_{g, 0}(V)$ is equal to $A_g(V)$ defined in \cite{DLM98a}.
\begin{definition}
     A weak $g$-twisted module $M$ is called a lower truncated $\frac{1}{T}\Z$-graded weight module if there exists a complex number $h\in \C$ such that $M=\oplus_{n\in \frac{1}{T}\N}M(n)$, where $M(n)=\{v\in M \mid L_{(0)}v=(h+n)v\}$. $h$ is called the conformal weight of $M$. Suppose further that $M(0)$ is an irreducible $A_{g}(V)$-module and $M$ is generated by $M(0)$, then $M$ is called a lowest weight module.
\end{definition}  
By Theorem \ref{thm:twistedZhu}, any irreducible $g$-twisted module $M$ is a lowest weight module.
\subsection{Twisted intertwining operator}\label{subsec:TIO}
Let $g_{k}$ ($k=1, 2, 3$) be three commuting automorphisms of vertex operator algebra $V$ such that $g_{k}^{T}=1$ for a finite number $T\in \N$, and $M^{k}$ be a $g_{k}$-twisted $V$-module for $k=1, 2, 3$. Since $g_{1}g_{2}=g_{2}g_{1}$, we have the following common eigenspace decomposition: 
\begin{equation}
    V=\oplus_{0\leq j_1, j_2<T}V^{(j_1, j_2)},
\end{equation}
where 
\begin{equation}\label{eq:common-eigenvector-space}
    V^{(j_1, j_2)}=\{v\in V\mid g_kv=e^{\frac{2\pi \iu j_k}{T}}v, k=1, 2\}.
\end{equation}
Throughout this paper, for a formal variable $z$ and $\alpha \in \C$,  we fix
\begin{equation}
    (-1)^{\alpha}=e^{\pi \alpha \iu},
\end{equation}
 and by $(-z)^\alpha$ we mean $(-1)^\alpha z^\alpha$, more generally, for $f\in R\{z\}$, where $R$ is any ring, by $f(-z)$ we mean $f(y)\mid_{y^\alpha=(-1)^\alpha z^\alpha}$. 
The definition of the twisted intertwining operator can be found in \cite{X95, DLM96, LS23}. An analytic version was given in \cite{H18}, which works for not-necessarily-commuting automorphisms. In this paper, we adopt the formal variable version given in \cite{LS23}.
\begin{definition}\label{def:TIO}
    An intertwining operator of type $\binom{M^3}{M^1M^2}$ is a linear map \[I(-, z): M^1 \longrightarrow \Hom(M^2, M^3)\{z\}\] assigning to each vector $v\in M^1$ a formal series $I(v, z)=\sum_{n\in\C}v(n)z^{-n-1}$ such that:
    \begin{itemize}
        \item \textbf{Truncation property}: For $v\in M^1, v_2\in M^2$, and $c\in \C$, $v(c+n)v_2=0$ when $n\gg 0$, and $n\in \Q$;

        \item \textbf{Generalized Jacobi identity}: For $a\in V^{(j_1,j_2)}$, $v \in M^1$, $v_2\in M^2$, and $0\leq j_1, j_2\leq T-1$,
        \begin{equation}    
            \begin{aligned}
            \MoveEqLeft
            z_0^{-1}\delta(\frac{z_1-z_2}{z_0})(\frac{z_1-z_2}{z_0})^{\frac{j_1}{T}}Y_{M^3}(a,z_1)I(v,z_2)v_2\\
            -&z_0^{-1}\delta(\frac{-z_2+z_1}{z_0})(\frac{-z_2+z_1}{z_0})^{\frac{j_1}{T}}I(v,z_2)Y_{M^2}(a,z_1)v_2\\
            =&z_1^{-1}\delta(\frac{z_2+z_0}{z_1})(\frac{z_2+z_0}{z_1})^{\frac{j_2}{T}}I(Y_{M^1}(a,z_0)v,z_2)v_2;   
            \end{aligned} 
        \end{equation}  

        \item \textbf{$L_{(-1)}$-derivative property}: For $v\in M^1$, 
        \begin{equation}
            I(L_{(-1)}v, z)=\frac{d}{d z}I(v, z).    
        \end{equation}
    \end{itemize}
\end{definition} 
The generalized Jacobi identity is equivalent to the following associativity and commutativity (cf. \cite{LS23}).

\textbf{Associativity}: For $a\in V^{(j_1, j_2)}$, $v\in M^1$ and $v_2\in M^2$, there exists an integer $k$ such that 
    \begin{equation}\label{eq:associativity-1}
        (z_0+z_2)^{k+\frac{j_2}{T}}Y_{M^3}(a, z_0+z_2)I^{\circ}(v, z_2)v_2=(z_2+z_0)^{k+\frac{j_2}{T}}I^{\circ}\big(Y_{M^1}(a, z_0)v, z_2\big)v_2.
    \end{equation}
    
\textbf{Commutativity}: For $a\in V^{(j_1, j_2)}$, $v\in M^1$ and $v_2\in M^2$, there exists an integer $l$ such that
\begin{equation}\label{eq:commutativity}
    (z_1-z_2)^{l+\frac{j_1}{T}}Y_{M^3}(a, z_1)I(v, z_2)v_2=(-z_2+z_1)^{l+\frac{j_1}{T}}I(v, z_2)Y_{M^2}(a, z_1)v_2.
\end{equation}
Denote the space of intertwining operators of type $\binom{M^3}{M^1M^2}$ by ${\mathcal{I}}\binom{M^3}{M^1M^2}$ and set 
\begin{equation}
    N^{M^3}_{M^1M^2}=\dim {\mathcal{I}}\binom{M^3}{M^1M^2}.
\end{equation}
These numbers are called fusion rules of $V$. 

When $V$ is simple, the existence of a nonzero intertwining operator among $M^1, M^2$, and $M^3$ implies $g_3=g_1g_2$, see \cite[Theorem 3.4]{X95}. Thus we will always assume $g_3=g_1g_2$ in this paper.

Recall that for a weak $g$-twisted module $M$ and an automorphism $h$ of $V$, one can define a weak $hgh^{-1}$-twisted module $(M\circ h, Y_{M\circ h})$ by letting $M\circ h=M$ as vector space and the module map $Y_{M\circ h}(a, z)=Y_{M}(h^{-1}a, z)$. For $I\in \mathcal{I}\binom{M^3}{M^1M^2}$, define linear maps 
\begin{align*}
    \Omega_{\pm}(I)\colon &M^2\otimes M^1\longrightarrow M^3\{z\}\\
    &v_2\otimes v\mapsto \Omega_{\pm}(I)(v_2, z)v
\end{align*}
by 
\begin{equation}\label{def:transposeIO}
    \Omega_{\pm}(I)(v_2, z)v=e^{zL_{(-1)}}I(v, y)v_2\mid_{y^\alpha=e^{\pm \pi \iu \alpha}z^\alpha},
\end{equation}
and
\begin{align*}
    A_{\pm}(I)\colon &M^1\otimes (M^3)'\longrightarrow (M^2)'\{z\}\\
    &v\otimes v_3'\mapsto A_{\pm}(I)(v, z)v_3'
\end{align*}
by 
\begin{equation}\label{def:adjointIO}
    \langle A_{\pm}(I)(v, z)v_3', v_2 \rangle=\langle v_3', I\big(e^{zL_{(1)}}e^{\pm \pi \iu L_{(0)}}(z^{-L_{(0)}})^2v, z^{-1}\big)v_2 \rangle
\end{equation}
for $v\in M^1$, $v_2\in M^2$, and $v_3'\in (M^3)'$.
The following two properties are generalizations of \cite[Theorem 5.4.7, 5.5.1]{FHL93} and were proved in \cite{H18}.
\begin{proposition}\label{prop:transpose-IO}
    Let $M^k$ be a $g_k$ twisted $V$-module for $k=1, 2, 3$ and $I\in \mathcal{I}\binom{M^3}{M^1 M^2}$. Then $\Omega_{+}(I)\in \mathcal{I}\binom{M^3}{M^2 M^1\circ g_2^{-1}}$ and $\Omega_{-}(I)\in \mathcal{I}\binom{M^3}{M^2\circ g_1 M^1}$. Moreover, 
    \[\Omega_{+}\colon \mathcal{I}\binom{M^3}{M^1M^2}\rightarrow \mathcal{I}\binom{M^3}{M^2 M^1\circ g_2^{-1}}\] and \[\Omega_{-}\colon \mathcal{I}\binom{M^3}{M^1M^2}\rightarrow \mathcal{I}\binom{M^3}{M^2\circ g_1 M^1}\] are linear isomorphisms and $\Omega_{+}\Omega_{-}=\Omega_{-}\Omega_{+}=1$.
\end{proposition}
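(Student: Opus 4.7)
The plan is to verify directly that, for any $I\in\mathcal{I}\binom{M^3}{M^1M^2}$, $\Omega_+(I)$ satisfies the three axioms of Definition \ref{def:TIO} as an intertwining operator of type $\binom{M^3}{M^2\,M^1\circ g_2^{-1}}$, and then to establish the involution $\Omega_-\Omega_+=\Omega_+\Omega_-=1$, which immediately yields both linear isomorphisms. I focus on $\Omega_+$; the argument for $\Omega_-$ is symmetric under the sign choice $(-1)^\alpha=e^{-\pi\iu\alpha}$ in place of $(-1)^\alpha=e^{\pi\iu\alpha}$, which is responsible for the different target twist data ($M^2\circ g_1$ in place of $M^1\circ g_2^{-1}$, since $(M^1\circ g_2^{-1})\circ g_2=M^1$ and $(M^2\circ g_1)\circ g_1^{-1}=M^2$ will later give the involution types).

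Truncation for $\Omega_+(I)(v_2,z)v$ is immediate, since the substitution $y^\alpha\mapsto e^{\pi\iu\alpha}z^\alpha$ preserves the lower bound on the exponents and premultiplication by $e^{zL_{(-1)}}$ introduces only nonnegative integer shifts. The $L_{(-1)}$-derivative property reduces to the calculational identity $\frac{d}{dz}\bigl(f(y)\mid_{y^\alpha=e^{\pi\iu\alpha}z^\alpha}\bigr)=-\bigl(\frac{d}{dy}f(y)\bigr)\mid_{y^\alpha=e^{\pi\iu\alpha}z^\alpha}$ combined with the standard $L_{(-1)}$-commutator formula for intertwining operators, a consequence of Jacobi for $I$ applied to $a=\omega$. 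The heart of the proof is the generalized Jacobi identity for $\Omega_+(I)$, which I would verify in its equivalent associativity-plus-commutativity formulation \eqref{eq:associativity-1}, \eqref{eq:commutativity}. The key tool is the conjugation identity $e^{zL_{(-1)}}Y_{M^3}(a,z_1)e^{-zL_{(-1)}}=Y_{M^3}(a,z_1+z)$, which moves the action of $Y_{M^3}$ across the factor $e^{zL_{(-1)}}$ appearing in $\Omega_+(I)(v_2,z)v$. Starting from the commutativity of $I$ for $a\in V^{(j_1,j_2)}$ and using $Y_{M^1\circ g_2^{-1}}(a,z)=Y_{M^1}(g_2a,z)=e^{2\pi\iu j_2/T}Y_{M^1}(a,z)$, one obtains the commutativity of $\Omega_+(I)$ relative to the $(g_2,g_1)$-grading of $V$ required for the target type, with the phase $e^{2\pi\iu j_2/T}$ combining with the $e^{\pi\iu\alpha}$-phases generated by the substitution $y\mapsto -z$ to reproduce precisely the fractional factors of Definition \ref{def:TIO} with $j_1$ and $j_2$ interchanged.

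The inverse relation is then a direct substitution check: composing $\Omega_-$ after $\Omega_+$ applies the two substitutions $y^\alpha\mapsto e^{\pm\pi\iu\alpha}z^\alpha$ in opposite directions so that $-y$ becomes $z$, while the two factors $e^{\pm zL_{(-1)}}$ telescope to the identity after the appropriate relabeling, restoring $I(v,z)v_2$; symmetrically $\Omega_+\Omega_-=1$. The main obstacle I anticipate is the careful bookkeeping of fractional exponents and twist phases through the swap of the first two arguments together with the formal substitution $y^\alpha\mapsto e^{\pm\pi\iu\alpha}z^\alpha$: one must confirm that the commutativity of $I$ labeled by $(j_1,j_2)$ relative to $(g_1,g_2)$ becomes the commutativity of $\Omega_+(I)$ labeled by $(j_2,j_1)$ relative to $(g_2,g_1)$, so that $\Omega_+(I)$ is genuinely an intertwining operator of the claimed type and not of some nearby type differing by an overall phase or a different twist.
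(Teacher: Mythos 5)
Your overall strategy---a direct formal-variable verification of the three axioms of Definition \ref{def:TIO} for $\Omega_+(I)$, followed by the substitution check of the involution $\Omega_+\Omega_-=\Omega_-\Omega_+=1$---is a reasonable route, and it is genuinely different from what the paper does: the paper does not prove this proposition, but cites \cite{H18}, where the statement is obtained analytically. Your approach would be the twisted analogue of \cite[Theorem 5.4.7]{FHL93}.

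There is, however, a concrete error in the central step that would derail the computation. You claim that ``starting from the commutativity of $I$ $\ldots$ one obtains the commutativity of $\Omega_+(I)$.'' The transpose operation swaps the two halves of the generalized Jacobi identity, so this pairing is backwards. Indeed, the commutativity of $\Omega_+(I)$ as an operator of type $\binom{M^3}{M^2\,M^1\circ g_2^{-1}}$ relates
\begin{equation*}
Y_{M^3}(a,z_1)\,\Omega_+(I)(v_2,z_2)v
\quad\text{and}\quad
\Omega_+(I)(v_2,z_2)\,Y_{M^1\circ g_2^{-1}}(a,z_1)v,
\end{equation*}
and the second of these unwinds, via the definition \eqref{def:transposeIO} of $\Omega_+$ and the $e^{z_2L_{(-1)}}$-conjugation, to a phase times $e^{z_2L_{(-1)}}I\bigl(Y_{M^1}(a,z_1)v,y\bigr)v_2$ evaluated at $y^\alpha=e^{\pi\iu\alpha}z_2^\alpha$; that is the iterate (associativity) side of the Jacobi identity for $I$, cf.\ \eqref{eq:associativity-1}, not the commutativity side. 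Dually, the associativity of $\Omega_+(I)$, which involves $\Omega_+(I)(Y_{M^2}(a,z_0)v_2,z_2)v=e^{z_2L_{(-1)}}I(v,y)Y_{M^2}(a,z_0)v_2$ under the same substitution, is what corresponds to the commutativity \eqref{eq:commutativity} of $I$. If you follow your outline verbatim you will attempt to match quantities carrying the wrong fractional exponents and phases, and the identity will not close. Beyond this structural slip, you explicitly flag the careful bookkeeping of the $e^{\pm\pi\iu\alpha}$ substitution phases together with the twist phase $e^{2\pi\iu j_2/T}$ from $Y_{M^1\circ g_2^{-1}}$ as ``the main obstacle'' but do not carry it out; since that bookkeeping is the entire mathematical content of the proposition (you are right that truncation and the $L_{(-1)}$-derivative property are formalities), the proposal as written falls short of a proof.
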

\begin{proposition}\label{prop:adjoint-IO}
    Let $M^k$ be a $g_k$ twisted $V$-module for $k=1, 2, 3$ and $I\in \mathcal{I}\binom{M^3}{M^1M^2}$. Then $A_{+}(I)\in \mathcal{I}\binom{(M^2)'\circ g_1}{M^1(M^3)'}$ and $A_{-}(I)\in \mathcal{I}\binom{(M^2)'}{M^1(M^3)'\circ g_1^{-1}}$. Moreover, 
    \[A_{+}\colon \mathcal{I}\binom{M^3}{M^1M^2}\rightarrow \mathcal{I}\binom{(M^2)'\circ g_1}{M^1(M^3)'}\] and \[A_{-}\colon \mathcal{I}\binom{M^3}{M^1M^2}\rightarrow \mathcal{I}\binom{(M^2)'}{M^1(M^3)'\circ g_1^{-1}}\] are linear isomorphisms and $A_{+}A_{-}=A_{-}A_{+}=1$.
\end{proposition}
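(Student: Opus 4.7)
The plan is to adapt the standard proof of the untwisted adjoint intertwining operator theorem (Theorem 5.5.1 of FHL93) to the twisted setting, where the main novelty is tracking factors $e^{\pm \pi\iu (j/T)}$ produced by the eigenspace decomposition \eqref{eq:common-eigenvector-space}. Truncation of $A_{\pm}(I)(v,z)$ is immediate from truncation of $I$ and the finite-dimensionality of weight spaces. The $L_{(-1)}$-derivative property follows by differentiating \eqref{def:adjointIO} and using the relations $[L_{(-1)}, L_{(1)}] = -2L_{(0)}$ and $[L_{(-1)}, L_{(0)}] = L_{(-1)}$ together with the definition of the contragredient action (which realizes $L_{(-1)}$ on a contragredient module as the dual of $-L_{(1)}$ on the original module).

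The main step is verifying the twisted Jacobi identity for $A_{\pm}(I)$. Pairing both sides of the desired identity against an arbitrary $v_2 \in M^2$ and applying \eqref{def:adjointIO} rewrites everything in terms of $I$ together with the defining formulas for the contragredient vertex operators $Y_{M^2}'$ and $Y_{M^3}'$. The key technical move is the substitution $z_i \mapsto z_i^{-1}$ together with an appropriate rescaling of $z_0$: under this substitution the twisted Jacobi identity for $I$ transforms into the twisted Jacobi identity for $A_{\pm}(I)$, provided that the conjugation by $e^{zL_{(1)}}$ passes through $Y_V$ in the standard way and that the branch-choice factor $(-z^{-2})^{L_{(0)}} = e^{\pm \pi\iu L_{(0)}}(z^{-L_{(0)}})^2$ produces exactly the fractional-power factors required for the new Jacobi identity. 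The two choices of branch $\pm$ correspond to the two target modules $(M^2)'\circ g_1$ and $(M^2)'$ in the statement, since flipping the branch flips the sign of the $\frac{j_1}{T}$ exponent and thereby switches which $g$-twist appears on the codomain.

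The identities $A_{+}A_{-} = A_{-}A_{+} = 1$ reduce, after the change of variable $w = z^{-1}$, to the formal-series identity
\begin{equation*}
e^{z L_{(1)}} e^{\pi\iu L_{(0)}}(z^{-L_{(0)}})^2 e^{w L_{(1)}} e^{-\pi\iu L_{(0)}}(w^{-L_{(0)}})^2 = \mathrm{id}
\end{equation*}
acting on $M^1$, a standard Virasoro computation in which the opposite-sign $e^{\pm\pi\iu L_{(0)}}$ factors cancel after commuting past $e^{wL_{(1)}}$ via $[L_{(0)}, L_{(1)}] = -L_{(1)}$.

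The main obstacle I anticipate is the careful bookkeeping of fractional powers and signs, since a single sign error changes whether $A_{\pm}(I)$ lies in $\mathcal{I}\binom{(M^2)'\circ g_1}{M^1(M^3)'}$ or in $\mathcal{I}\binom{(M^2)'}{M^1(M^3)'\circ g_1^{-1}}$, and also which automorphism constraint $g_3 = g_1 g_2$ is respected on each side. A cleaner alternative, if the direct verification becomes unwieldy, is to realize $A_{\pm}$ as the composition of $\Omega_{\pm}$ from Proposition \ref{prop:transpose-IO} with a suitable contragredient operation on intertwining spaces, combined with Theorem \ref{thm:contragredient}, so that bijectivity and the relation $A_{+}A_{-} = A_{-}A_{+} = 1$ follow from the already-established analogue for $\Omega_{\pm}$ and the involutivity of the contragredient.
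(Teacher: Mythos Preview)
The paper does not prove this proposition at all: it is stated as a known result and attributed to \cite{H18} (see the sentence immediately preceding Propositions~\ref{prop:transpose-IO} and~\ref{prop:adjoint-IO}, ``The following two properties are generalizations of \cite[Theorem 5.4.7, 5.5.1]{FHL93} and were proved in \cite{H18}''). So there is no in-paper argument to compare against.

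Your proposal is a reasonable outline of the direct verification one would carry out following the FHL93 template, and the steps you list (truncation, $L_{(-1)}$-derivative via Virasoro commutators, Jacobi via the substitution $z_i\mapsto z_i^{-1}$ together with the conjugation formula for $e^{zL_{(1)}}$, and the cancellation showing $A_+A_-=A_-A_+=1$) are the correct ingredients. Your worry about sign and fractional-power bookkeeping is well founded: in the twisted setting this is precisely where the argument has content, since the substitution $z\mapsto z^{-1}$ interacts with the $\delta$-function factors $(z_i/z_j)^{j_k/T}$ in a way that must produce the correct $g_1$-shift on exactly one of the contragredient modules. Your alternative of factoring $A_\pm$ through $\Omega_\pm$ and the contragredient is also viable and is in fact closer to how Huang organizes the argument in the analytic setting. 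Either route would constitute a genuine proof where the paper offers only a citation.
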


The following proposition is an easy consequence of the generalized Jacobi identity and $L_{(-1)}$-derivative property (cf. \cite{FHL93, FZ92}). 

\begin{proposition}\label{prop:shifted-TIO}
Let $M^i=\oplus_{n\in \frac{1}{T}\N}M^i(n)$ be a lower truncated $\frac{1}{T}\Z$-graded $g_i$-twisted weight module with conformal weight $h_i$ for $i=1, 2, 3$, and $I$ a nonzero intertwining operator in $\mathcal{I}\binom{M^3}{M^1M^2}$. 
Set
    \begin{equation}
        I^{\circ}(-, z)=z^{h_1+h_2-h_3}I(-, z).
    \end{equation}
Then for $v\in M^1$, we have
    \begin{equation}
        I^{\circ}(v, z)\in {\rm Hom}(M^2, M^3)[[z^{\frac{1}{T}}, z^{-\frac{1}{T}}]].
    \end{equation}
Set 
    \begin{equation}
        I^{\circ}(v, z)=\sum_{n\in \frac{1}{T}\mathbb{Z}}v_{(n)}z^{-n-1}.
    \end{equation}
Then for every homogeneous $v\in M^1$, $n\in \frac{1}{T}\Z$ and $m\in \frac{1}{T}\N$,                   
    \begin{equation}
        v_{(n)}M^2(m)\subseteq M^3(m+\deg v-n-1).
    \end{equation}
\end{proposition}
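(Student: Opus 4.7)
The plan is to reduce both claims to standard $L_{(0)}$-weight bookkeeping applied inside $I(v,z)v_2$, using the commutator bracket $[L_{(0)}, I(v,z)]$ that drops out of the generalized Jacobi identity at $a=\omega$.

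Since $\omega$ is fixed by every automorphism of $V$, it lies in $V^{(0,0)}$, so the branch factors $((z_1-z_2)/z_0)^{j_1/T}$ and $((z_2+z_0)/z_1)^{j_2/T}$ in Definition \ref{def:TIO} become trivial and the Jacobi identity at $a=\omega$ reduces to its untwisted form. A standard residue argument (take $\Res_{z_0}$ to obtain the commutator formula, then $\Res_{z_1}z_1$ to extract the $L_{(0)}$-component) yields
\begin{equation*}
    [L_{(0)}, I(v,z)] \;=\; z\frac{d}{dz}I(v,z) \;+\; I(L_{(0)}v,z)
\end{equation*}
for every $v \in M^1$, together with the $L_{(-1)}$-derivative property already built into $I$.

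Now fix homogeneous $v \in M^1(k)$ and $v_2 \in M^2(m)$, so $L_{(0)}v=(h_1+k)v$ and $L_{(0)}v_2=(h_2+m)v_2$. Writing $I(v,z)v_2 = \sum_{\alpha \in \C} u_{\alpha} z^{\alpha}$ with $u_{\alpha}\in M^3$ and applying the bracket above to $v_2$, comparison of coefficients of $z^{\alpha}$ gives
\begin{equation*}
    L_{(0)}u_{\alpha} \;=\; (h_1+h_2+k+m+\alpha)u_{\alpha}.
\end{equation*}
Because $M^3=\bigoplus_{p\in\frac{1}{T}\N}M^3(p)$ with $M^3(p)$ the $(h_3+p)$-eigenspace of $L_{(0)}$, the only surviving $u_{\alpha}$ occur at $\alpha=(h_3-h_1-h_2)+(p-k-m)$ for some $p\in\frac{1}{T}\N$, and then $u_{\alpha}\in M^3(p)$; the lower truncation on exponents (from the truncation property of $I$) is automatic, since $p\ge 0$.

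Multiplying through by $z^{h_1+h_2-h_3}$ shifts every surviving exponent into $\frac{1}{T}\Z$, establishing $I^{\circ}(v,z)\in\Hom(M^2,M^3)[[z^{1/T},z^{-1/T}]]$. Reading off the coefficient $v_{(n)}v_2$ of $z^{-n-1}$ in $I^{\circ}(v,z)v_2$ then identifies it with $u_{\alpha}$ at $p=k+m-n-1=\deg v+m-n-1$, placing it in $M^3(\deg v+m-n-1)$, which is the second assertion. The argument is essentially bookkeeping; the only place requiring a bit of care is checking that the twisted Jacobi identity collapses to its untwisted form at $a=\omega$, and this is immediate from $\omega\in V^{(0,0)}$.
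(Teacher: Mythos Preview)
Your proof is correct and follows exactly the approach the paper indicates: the paper does not spell out a proof but states that the proposition is ``an easy consequence of the generalized Jacobi identity and $L_{(-1)}$-derivative property,'' and your argument uses precisely these two ingredients (the Jacobi identity at $a=\omega\in V^{(0,0)}$ to extract the $[L_{(0)},I(v,z)]$ commutator, and the $L_{(-1)}$-derivative property to convert $I(L_{(-1)}v,z)$ into $\frac{d}{dz}I(v,z)$). Your $L_{(0)}$-eigenvalue bookkeeping is the standard way to flesh this out.
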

By Proposition \ref{prop:shifted-TIO}, we see that $v_{(n)}$ has weight $\deg v-n-1$. 

\noindent
\textbf{Some notations:}
When $I$ is as in Proposition \ref{prop:shifted-TIO}, for $k\in \frac{1}{T}\Z$ and $s, t\in \frac{1}{T}\N$, we let 
\begin{equation}
    o^I_{k}(v):=v_{(\deg v-1-k)}
\end{equation}
be the weight $k$ component of $v$. When $k=0$, denote $o^I_{0}(v)$ by just $o^I(v)$. Let 
\begin{equation}
    o^I_{s+k, s}(v):=o^I_{k}(v)\mid_{\Omega_{s}(M^2)}
\end{equation}
be the restriction of $o^I_{k}(v)$ on $\Omega_{s}(M^2)$. When $I$ is of type $\binom{M}{VM}$ for a twisted module $M$, we denote $o^I_{k}(a)$ by $o^{M}_{k}(a)$, $o^M_{k}(a)\mid_{\Omega_{s}(M)}$ by $o^M_{s+k, s}(a)$ for $a\in V$. Note that $o^{M}_{0}(a)=o^{M}(a)$.

In general, suppose $M^i$ is a $g_i$-twisted module for $i=1, 2, 3$. Since each $g_i$-twisted module is a direct sum of lower truncated $\frac{1}{T}\Z$-graded $g_i$-twisted weight module, i.e., $M^{i}=\oplus_{\alpha_i\in \C/\frac{1}{T}\Z}M^{i\alpha_i}$, an intertwining operator $I$ in $\mathcal{I}\binom{M^3}{M^1M^2}$ can be written as $I=\oplus_{\alpha_1, \alpha_2, \alpha_3}I_{\alpha_1\alpha_2}^{\alpha_3}$, where $I_{\alpha_1\alpha_2}^{\alpha_3}\in \mathcal{I}\binom{M^{3\alpha_3}}{M^{1\alpha_1}M^{2\alpha_2}}$. So $o^I_{n, m}(v)$ can be defined correspondingly.

For an intertwining operator $I$ as in Proposition \ref{prop:shifted-TIO} and $v\in M^1$, write 
\begin{equation}
    I^{\circ}(v, z)=\sum_{r=0}^{T-1}I^{\circ}_{r}(v, z), \quad \text{where } I^{\circ}_{r}(v, z)=\sum_{n\in \Z+\frac{r}{T}}v_{(n)}z^{-n-1}.
\end{equation}
Then we have another associativity property:

\begin{proposition}\label{prop:associativity}
    For any $a\in V^{(j_1, j_2)}$, $v\in M^1$, $v_2\in M^2$, and $r\in \N, 0\leq r<T$, there exists an integer $k'$ such that  
    \begin{equation}\label{eq:associativity-2}
        \begin{aligned}
            \MoveEqLeft
            (-z_0+z_1)^{k'+\frac{r}{T}}I^{\circ}_{\overline{r+j_1}}(v, -z_0+z_1)Y_{M^2}(g_1a, z_1)v_2\\
            &=(z_1-z_0)^{k'+\frac{r}{T}}I^{\circ}_{\overline{r+j_2}}(Y_{M^1}(a, z_0)v, z_1-z_0)v_2.
        \end{aligned}
    \end{equation}
\end{proposition}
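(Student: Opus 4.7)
The identity \eqref{eq:associativity-2} is a hybrid of the associativity \eqref{eq:associativity-1} and the commutativity \eqref{eq:commutativity} for $I^{\circ}$: on the LHS the operator $Y_{M^2}(g_1 a, z_1)$ sits outside $I^{\circ}$ (as in commutativity), while on the RHS the insertion $Y_{M^1}(a, z_0) v$ is absorbed into the argument of $I^{\circ}$ (as in associativity). The plan is to combine these two structural identities under the substitution $z_2 = z_1 - z_0$ (equivalently $z_1 = z_0 + z_2$) and then to extract the appropriate mod-$T$ graded components of $I^{\circ}$.

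First, I multiply \eqref{eq:commutativity} by $z_2^{h_1+h_2-h_3}$ to obtain the analog of commutativity for the shifted operator $I^{\circ}$. Setting $z_1 = z_0 + z_2$ and then applying \eqref{eq:associativity-1} to rewrite $Y_{M^3}(a, z_0+z_2)I^{\circ}(v, z_2) v_2$ as $I^{\circ}(Y_{M^1}(a, z_0)v, z_2) v_2$ (up to a suitable power of $(z_0+z_2)$), I obtain a three-variable identity of the schematic form
\[
z_0^{l+j_1/T}(z_0+z_2)^{k+j_2/T}\, I^{\circ}(v, z_2)\, Y_{M^2}(a, z_0+z_2)\, v_2
= (\text{phase factor}) \cdot (z_2+z_0)^{k+j_2/T}\, I^{\circ}(Y_{M^1}(a, z_0)v, z_2)\, v_2.
\]

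Next, I reparametrize with $z_0, z_1$ as independent variables and $z_2 = z_1-z_0$. The substitution must respect two different expansion conventions: on the LHS of \eqref{eq:associativity-2}, $z_2$ is identified with $-z_0+z_1$ (nonnegative powers of $z_1$, inherited from the $(-z_2+z_1)^{l+j_1/T}$ factor in the commutativity step); on the RHS, $z_2$ is identified with $z_1-z_0$ (nonnegative powers of $z_0$, inherited from the $(z_2+z_0)^{k+j_2/T}$ factor in the associativity step). Decomposing $I^{\circ}=\sum_{s=0}^{T-1} I^{\circ}_s$, the fractional factors $(\cdot)^{l+j_1/T}$ and $(\cdot)^{k+j_2/T}$ couple with the mod-$T$ class of $I^{\circ}_s$ and shift the index $s$ to $\overline{r+j_1}$ on the LHS and $\overline{r+j_2}$ on the RHS, for the appropriate $r$; the integer $k'$ in \eqref{eq:associativity-2} is determined from $l$ and $k$.

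The main obstacle is tracking the phase arising from the expansion-convention mismatch. Using the paper's convention $(-z)^{\alpha}=e^{\pi\iu\alpha}z^{\alpha}$, the comparison between $(-z_2+z_1)^{l+j_1/T}$ and $(z_1-z_2)^{l+j_1/T}$ under $z_2=z_1-z_0$ produces a phase factor $e^{2\pi\iu j_1/T}$, which is exactly the eigenvalue of $g_1$ on $V^{(j_1,j_2)}$; since $g_1 a=e^{2\pi\iu j_1/T}a$, absorbing this phase into $a$ yields the $Y_{M^2}(g_1 a, z_1)$ appearing on the LHS of \eqref{eq:associativity-2}. This phase identification is the technical heart of the proof and reduces to careful bookkeeping of binomial expansions of fractional powers; once it is verified, comparing mod-$T$ classes in the $z_2$-variable on both sides yields the stated shifts $\overline{r+j_1}$ and $\overline{r+j_2}$ and completes the argument.
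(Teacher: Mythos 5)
Your overall strategy is genuinely different from the paper's. The paper derives \eqref{eq:associativity-2} directly from the generalized Jacobi identity: it applies $\Res_{z_2} z_2^{k'+r/T}$ with $k'$ large enough that $z_2^{k'+r/T} I^\circ_r(v, z_2)v_2 \in M^3[[z_2]]$, which kills the first $\delta$-function term of the Jacobi identity and leaves an equality between the second term (the ``$I^\circ Y_{M^2}$'' product) and the iterate. It then rewrites the remaining $\delta$-functions using \cite[Prop.~8.8.22]{FLM88} and extracts the phase from the convention identity $(\tfrac{z_0-z_1}{-z_2})^{-j_1/T} = e^{2\pi \iu j_1/T}(\tfrac{-z_0+z_1}{z_2})^{-j_1/T}$, which is where $a$ becomes $g_1 a$. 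Your plan instead composes the two consequences of the Jacobi identity (commutativity and associativity) rather than going back to the Jacobi identity itself. That is a legitimate logical path in principle, since the two properties together are equivalent to the Jacobi identity, but it carries extra technical weight that the proposal does not discharge.

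The gap I would flag is the step ``setting $z_1 = z_0 + z_2$ and then applying \eqref{eq:associativity-1}.'' As written this is not a well-defined formal-variable operation. Commutativity is an equality of two Laurent series expanded in \emph{different} iterated domains: the left side lives in $M^3((z_2^{1/T}))((z_1^{1/T}))$ with the factor $(z_1-z_2)^{l+j_1/T}$ expanded in nonnegative powers of $z_2$, while the right side lives in $M^3((z_1^{1/T}))((z_2^{1/T}))$ with $(-z_2+z_1)^{l+j_1/T}$ expanded in nonnegative powers of $z_1$. Substituting $z_1 = z_0+z_2$ (binomially, in nonnegative powers of one of the new variables) in the right-hand side does \emph{not} simply turn $(-z_2+z_1)^{l+j_1/T}$ into $z_0^{l+j_1/T}$; the mismatch between the two expansion conventions is exactly what produces the phase and requires a residue or $\delta$-function argument to control. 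Your proposal also ends up with two multiplicative prefactors, $z_0^{l+j_1/T}$ (from commutativity) and $(z_0+z_2)^{k+j_2/T}$ (from associativity), and it is not explained how these collapse into the single $(-z_0+z_1)^{k'+r/T}$ factor of the target identity, nor how the residue parameter $r$ (which is what selects the components $I^\circ_{\overline{r+j_1}}$ and $I^\circ_{\overline{r+j_2}}$) enters at all. Finally, the sentence ``this phase identification\ldots reduces to careful bookkeeping'' is precisely the content of the proof; stating that it reduces to bookkeeping is not the same as doing the bookkeeping, and it is in that computation that the convention $(-z)^\alpha = e^{\pi\iu\alpha}z^\alpha$ produces $g_1 a$. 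The paper avoids all of this by never leaving the single three-variable Jacobi identity: one residue extraction and one application of the FLM $\delta$-function identity do everything at once.
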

\begin{proof}
     Note that for $k', r, i\in \Z$ and $0\leq r, i<T$, 
    \begin{equation}
        \Res_{z_2} z_2^{k'+\frac{r}{T}}I^{\circ}_{i}(v, z_2)v_2=0
    \end{equation}
    if $r\neq i$. Choose $k'$ big enough such that $z_2^{k'+\frac{r}{T}}I^{\circ}_{r}(v, z_2)v_2\in M^3[[z_2]]$. Applying $\Res_{z_2}z_2^{k'+\frac{r}{T}}$ to the Jacobi identity, we get 
    \begin{equation}
        \begin{aligned}
            &-\Res_{z_2}z_2^{k'+\frac{r}{T}}z_0^{-1}\delta(\frac{-z_2+z_1}{z_0})(\frac{-z_2+z_1}{z_0})^{\frac{j_1}{T}}I^{\circ}(v, z_2)Y_{M^2}(a, z_1)v_2\\
            =&\Res_{z_2}z_2^{k'+\frac{r}{T}}z_1^{-1}\delta(\frac{z_2+z_0}{z_1})(\frac{z_2+z_0}{z_1})^{\frac{j_2}{T}}I^{\circ}(Y_{M^1}(a, z_0)v, z_2)v_2.
        \end{aligned}
    \end{equation}
    By \cite[Proposition 8.8.22]{FLM88}, 
    \begin{equation}
        \begin{aligned}
            &z_1^{-1}\delta(\frac{z_2+z_0}{z_1})(\frac{z_2+z_0}{z_1})^{\frac{j_2}{T}}
            =z_2^{-1}\delta(\frac{z_1-z_0}{z_2})(\frac{z_1-z_0}{z_2})^{-\frac{j_2}{T}}.
        \end{aligned}
    \end{equation}
    Hence
    \begin{equation}
        \begin{aligned}
            &\Res_{z_2}z_2^{k'+\frac{r}{T}}z_2^{-1}\delta(\frac{z_0-z_1}{-z_2})(\frac{z_0-z_1}{-z_2})^{-\frac{j_1}{T}}I^{\circ}(v, z_2)Y_{M^2}(a, z_1)v_2\\
            =&\Res_{z_2}z_2^{k'+\frac{r}{T}}z_1^{-1}\delta(\frac{z_2+z_0}{z_1})(\frac{z_2+z_0}{z_1})^{\frac{j_2}{T}}I^{\circ}(Y_{M^1}(a, z_0)v, z_2)v_2.
        \end{aligned}
    \end{equation}
    Note that 
    \begin{equation}
        (\frac{z_0-z_1}{-z_2})^{-\frac{j_1}{T}}=e^{\frac{2j_1\pi \iu}{T}}(\frac{-z_0+z_1}{z_2})^{-\frac{j_1}{T}}.
    \end{equation}
    Using properties of the $\delta$-function, we get
    \begin{equation}
        \begin{aligned}
            \MoveEqLeft
            (-z_0+z_1)^{k'+\frac{r}{T}}I^{\circ}_{\overline{r+j_1}}(v, -z_0+z_1)Y_{M^2}(g_1a, z_1)v_2\\
            &=(z_1-z_0)^{k'+\frac{r}{T}}I^{\circ}_{\overline{r+j_2}}(Y_{M^1}(a, z_0)v, z_1-z_0)v_2.
        \end{aligned}
    \end{equation}
\end{proof}

\begin{remark}\label{rmk:choice-of-powers}
    Now we have two associativity properties associated with a twisted intertwining operator $I\in \mathcal{I}\binom{M^3}{M^1M^2}$, \eqref{eq:associativity-1} and \eqref{eq:associativity-2}.
    Suppose $a\in V^{(j_1, j_2)}$ and $v$ are homogeneous. Then for $v_2\in \Omega_{m}(M^2)$, in \eqref{eq:associativity-1}, 
    \begin{equation}
        k+\frac{j_2}{T}=\wt a+\la(m, j_2)
    \end{equation}
    is the smallest number in $\Z+\frac{j_2}{T}$ such that 
    \begin{equation}
        z_1^{k+\frac{j_2}{T}}Y_{M^2}(a, z_1)v_2\in M^2[[z_1]].
    \end{equation}
    Let $t=\widetilde{\deg v}$ (cf. \eqref{eq: Tilde-number}). In the second associativity \eqref{eq:associativity-2}, for any $\N\ni r\in [0, T)$,
    \begin{equation}
        k'+\frac{r}{T}=\lf \deg v \rf+\la_{t}(m, r)
    \end{equation}
    is the smallest number in $\Z+\frac{r}{T}$ such that 
    \begin{equation}
        z_2^{k'+\frac{r}{T}}I^{\circ}_{r}(v, z_2)v_2\in M^3[[z_2]].
    \end{equation} 
    These will be used in Propositions \ref{prop:reason-for-left-action} and \ref{prop:reason-for-right-action}.
\end{remark}

We finish this section by recalling the definition of the tensor product of two twisted $V$-modules in terms of twisted intertwining operators, see \cite{HL95a, Li98}.
\begin{definition}
    Let $g_1, g_2$ be two commuting automorphisms of $V$ with finite order, $M^1$ be a $g_1$-twisted module, and $M^2$ be a $g_2$-twisted module. The tensor product of $M^1$ and $M^2$ is a pair $(M^1\boxtimes M^2, F)$, where $M^1\boxtimes M^2$ is a $g_1g_2$-twisted module and $F$ is an intertwining operator in $\mathcal{I}\binom{M^1\boxtimes M^2}{M^1M^2}$, satisfying the following universal property: for any $g_1g_2$-twisted module $M^3$ and any intertwining operator $I\in \mathcal{I}\binom{M^3}{M^1M^2}$, there is a unique homomorphism $f$ from $M^1\boxtimes M^2$ to $M^3$, such that $I=f\circ F$.
\end{definition}
From this definition, we immediately see that for any $g_1g_2$-twisted module $M^3$, we have
\begin{equation}
    \mathcal{I}\binom{M^3}{M^1M^2}\cong \Hom_{V}(M^1\boxtimes M^2, M^3)
\end{equation}
as vector spaces.

\section{Bimodules over twisted Zhu algebras}\label{sec:bimodules}
For the rest of the paper, we assume $V$ is a strongly rational vertex operator algebra. By Proposition \ref{prop:rationality}, $V$ is $g$-regular for any finitely ordered automorphism $g$ of $V$. Thus any weak $g$-twisted module is a direct sum of irreducible $g$-twisted modules, and there are only finitely many irreducible $g$-twisted modules up to isomorphism. For $g\in \text{Aut}(V)$, denote the set of all irreducible $g$-twisted modules by $\text{Irr}_{g}(V)$. Without loss of generality, we will mostly deal with irreducible modules. 

In this section, for three commuting automorphisms of $V$ with finite order such that $g_i^{T}=1$ for $i=1, 2, 3$, and a $g_1$-twisted module $M^1$, we construct a series of $A_{g_3,n}(V)$-$A_{g_2,m}(V)$-bimodules $\mathcal{A}_{g_3, g_2, n, m}(M^1)$ associated to the quadruple $(M^1, g_1, g_2, g_3)$ for $m, n\in \frac{1}{T}\N$. 
\subsection{Construction of bimodules}\label{subsec:bimodule construction}

Unless specified, $M^i$ will be an irreducible $g_i$-twisted module with conformal weight $h_i\in \C$ for $i=1, 2, 3$.    

Motivated by the ideas in \cite{DJ08a} and \cite{DJ08b}, for any nonzero intertwining operator $I$ in $\mathcal{I}\binom{M^3}{M^1M^2}$ and $m, n\in \frac{1}{T}\N$, consider the space $\Hom\left(\Omega_{m}(M^2), \Omega_{n}(M^3)\right)$. It follows from  Theorem \ref{thm:twistedZhu} that it is an $A_{g_3, n}(V)$-$A_{g_2, m}(V)$-bimodule with left action 
\begin{equation}\label{def:left-action}
([a]\ast f)(v_2)=o^{M^3}(a)\circ f(v_2),
\end{equation}
and right action 
\begin{equation}\label{def:right-action}
(f\ast [b])(v_2)=f\circ o^{M^2}(b)v_2,
\end{equation}
where $[a]\in A_{g_3, n}(V)$, $[b]\in A_{g_2, m}(V)$, and $f$ belong to $\Hom\left(\Omega_{m}(M^2), \Omega_{n}(M^3)\right)$. Let 
\begin{equation}
    S_{n, m}(I)=\{o^{I}_{n, m}(v)\mid v\in M^1\}.
\end{equation}
By definition, $S_{n, m}(I)$ is a subspaces of $\Hom\big(\Omega_m(M^2), \Omega_n(M^3)\big)$. We shall show that it is closed under the left and right actions given by \eqref{def:left-action} and \eqref{def:right-action}. For this purpose, we introduce the following operations:

Recall the eigenspace decomposition \ref{eq:common-eigenvector-space} and the Definition \ref{eq:lambda-number}. Let 
\begin{equation}
    j_3^{\vee}=\overline{-j_1-j_2}.    
\end{equation}
For $p\in \frac{1}{T}\N$, define a bilinear map $V^{(j_1, j_2)}\times M^1\rightarrow M^1$ by 
\begin{equation}\label{eq:left-composition}
    \begin{aligned}
        \MoveEqLeft
        a\bar{\ast}_{g_3, m, p}^n v=\Res_z \sum_{i=0}^{\lf p \rf}(-1)^i\binom{\la(m, j_2)+n-p+i}{i}\\
        &\hspace{1.5cm}\times\frac{(1+z)^{\la(m, j_2)}}{z^{\la(m, j_2)+n-p+i+1}}Y_{M^1}\left((1+z)^{L_{(0)}}a, z\right)v,
    \end{aligned}
\end{equation}
then extend it linearly to $V\times M^1\rightarrow M^1$. Define a bilinear map $M^1\times V^{(j_1, j_2)}\rightarrow M^1$ by 
\begin{equation}\label{eq:right-composition}
    \begin{aligned}
        \MoveEqLeft
        v\underline{\ast}_{g_2, m, p}^n a=\Res_z\sum_{i=0}^{\lf p \rf}(-1)^{p-m-\la(n, j_3^{\vee})}\binom{\la(n, j_3^{\vee})+m-p+i}{i}\\
        &\hspace{1.5cm}\times \frac{(1+z)^{\la(m, j_2)-\lf p \rf+i-1}}{z^{\la(n, j_3^{\vee})+m-p+i+1}}Y_{M^1}\left((1+z)^{L_{(0)}}g_1^{-1}a, z\right)v,
    \end{aligned}
\end{equation}
then extend it linearly to $M^1\times V\rightarrow M^1$. From the definition, one can see that $a\bar{\ast}_{g_3, m, p}^n v\neq 0$ unless $\overline{\Tilde{p}-\Tilde{n}-j_1-j_2}=0$ and $v\underline{\ast}_{g_2, m, p}^n a=0$ unless $\overline{\Tilde{m}-\Tilde{p}-j_2}=0$.

If $p=n$, we denote $\bar{\ast}_{g_3, m, p}^n$ by $\bar{\ast}_{g_3, m}^n$, and if $p=m$, we denote $\underline{\ast}_{g_2, m, p}^n$ by $\underline{\ast}_{g_2, m}^n$. Then we have
\begin{equation}\label{eq:left-action}
    \begin{aligned}
        a\bar{\ast}_{g_3, m}^n v=&\Res_z \sum_{i=0}^{\lf n \rf}(-1)^i\binom{\la(m, j_2)+i}{i}\frac{(1+z)^{\la(m, j_2)}}{z^{\la(m, j_2)+i+1}}Y_{M^1}(a, z)v,
    \end{aligned}
\end{equation}
and  
\begin{equation}\label{eq:right-action}
    \begin{aligned}
        v\underline{\ast}_{g_2, m}^n a=&\Res_z\sum_{i=0}^{\lf m \rf}(-1)^{-\la(n, j_3)}\binom{\la(n, j_3)+i}{i}\frac{(1+z)^{i-1}}{z^{\la(n, j_3^{\vee})+i+1}}Y_{M^1}\left((1+z)^{L_{(0)}}g_1^{-1}a, z\right)v.
    \end{aligned}
\end{equation}
The following two propositions are essential in constructing our bimodules, and they are consequences of the associativity \eqref{eq:associativity-1} and \eqref{eq:associativity-2}.

\begin{proposition}\label{prop:reason-for-left-action}
    Let $a\in V$, $v\in M^1$, $m, p, n\in \frac{1}{T}\N$ and $v_2\in \Omega_m(M^2)$. Then we have
    \begin{equation}\label{eq:reason-for-left-action}
        o^I_{n, m}(a\bar{\ast}_{g_3, m, p}^nv)v_2=o^{M^3}_{n, p}(a)o^I_{p, m}(v)v_2.
    \end{equation}
\end{proposition}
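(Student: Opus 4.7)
The plan is to deduce the identity from the associativity property \eqref{eq:associativity-1} of twisted intertwining operators via a two-variable residue calculation.

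First I would reduce to the case where $a\in V^{(j_1,j_2)}$ is a common $(g_1,g_2)$-eigenvector and $v\in M^1$ is homogeneous, since both sides of \eqref{eq:reason-for-left-action} are bilinear. By Remark \ref{rmk:choice-of-powers}, the minimal admissible exponent in \eqref{eq:associativity-1} is $k+\tfrac{j_2}{T}=\wt a+\lambda(m,j_2)$, chosen precisely so that $z_1^{k+j_2/T}Y_{M^2}(a,z_1)v_2\in M^2[[z_1]]$ for $v_2\in\Omega_m(M^2)$. With this choice, \eqref{eq:associativity-1} furnishes the key identity
\begin{equation*}
(z_0+z_2)^{\wt a+\lambda(m,j_2)}Y_{M^3}(a,z_0+z_2)I^{\circ}(v,z_2)v_2=(z_2+z_0)^{\wt a+\lambda(m,j_2)}I^{\circ}\bigl(Y_{M^1}(a,z_0)v,z_2\bigr)v_2,
\end{equation*}
with the two sides matching as formal series in $z_0$ after the usual expansion conventions.

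Second, I would integrate this identity against a kernel of the shape
\begin{equation*}
K(z_0,z_2)=\sum_{i=0}^{\lfloor p\rfloor}(-1)^i\binom{\lambda(m,j_2)+n-p+i}{i}\frac{z_2^{\deg v-1-(p-m)}}{z_0^{\lambda(m,j_2)+n-p+i+1}(z_0+z_2)^{\wt a+\lambda(m,j_2)}}
\end{equation*}
by taking $\Res_{z_0}\Res_{z_2}$. On the left side the factor $(z_0+z_2)^{\wt a+\lambda(m,j_2)}$ cancels and, after the substitution $z_1=z_0+z_2$, the resulting operator is $Y_{M^3}(a,z_1)I^{\circ}(v,z_2)v_2$; expanding $(z_0+z_2)^{-(\wt a+\lambda(m,j_2))}$ in non-negative powers of $z_0/z_2$, the alternating sum over $i$ truncated at $\lfloor p\rfloor$ isolates exactly those $z_1$-modes of $a$ routing $\Omega_p(M^3)$ into $\Omega_n(M^3)$, using the decomposition $\Omega_p(M^3)=\bigoplus_{i\leq p}M^3(i)$ from Theorem \ref{thm:twistedZhu}(4). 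The $z_2$-residue then picks out the $(p,m)$-component of $I^{\circ}(v,z_2)$ acting on $v_2$, so the left side assembles to $o^{M^3}_{n,p}(a)\,o^{I}_{p,m}(v)v_2$.

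Third, on the right side of the identity the same residue operation combines the factor $(z_2+z_0)^{\wt a+\lambda(m,j_2)}$ with the prescribed powers of $z_0$ to produce the conformal rescaling $(1+z)^{L_{(0)}}a$ inside $Y_{M^1}$ (set $z_0=z$, treat $z_2$ as a placeholder that is eventually absorbed). Term by term, the resulting expression reproduces the defining sum \eqref{eq:left-composition} of $a\bar{\ast}^n_{g_3,m,p}v$, and the final $\Res_{z_2}$ against the appropriate power of $z_2$ extracts the $(n,m)$-component, yielding $o^{I}_{n,m}(a\bar{\ast}^n_{g_3,m,p}v)v_2$. The main obstacle is the combinatorial bookkeeping: verifying that the specific signs, binomial coefficients, and exponents of $(1+z)$ and $z$ in \eqref{eq:left-composition} are exactly those produced by $K$. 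The cutoff $\lfloor p\rfloor$ in the sum is pinned down by the requirement that only contributions landing in $\Omega_p(M^3)$ survive, and the twisting shift $\lambda(m,j_2)$ must balance the mode-grading of $Y_{M^3}(a,z_1)$ (whose modes lie in $\Z+\overline{j_1+j_2}/T$ since $g_3 a=e^{2\pi\iu(j_1+j_2)/T}a$); confirming that all these shifts cooperate correctly is the crux of the argument.
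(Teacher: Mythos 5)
Your high-level strategy—reduce to homogeneous $a\in V^{(j_1,j_2)}$, invoke the associativity \eqref{eq:associativity-1} with the minimal exponent $\wt a+\lambda(m,j_2)$ from Remark~\ref{rmk:choice-of-powers}, and then apply $\Res_{z_0}\Res_{z_2}$ to both sides—is exactly the paper's approach. But the kernel you write down does not implement it correctly, and the sketch skips the two mechanisms that actually make the identity come out.

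First, the kernel is internally inconsistent about expansion direction and, more seriously, structurally wrong. You say the factor $(z_0+z_2)^{-(\wt a+\lambda(m,j_2))}$ in $K$ ``cancels'' the prefactor on the left of \eqref{eq:associativity-1}, yet in the same sentence you expand it ``in non-negative powers of $z_0/z_2$''; those require opposite dominance conventions and cannot both hold. More importantly, if you do cancel $(z_2+z_0)^{\wt a+\lambda(m,j_2)}$ on the right-hand side, then $\Res_{z_0}$ of the surviving $z_0^{-(\lambda(m,j_2)+n-p+i+1)}Y_{M^1}(a,z_0)v$ produces only the single mode $a_{(p-n-\lambda(m,j_2)-1-i)}v$ for each $i$. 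But $a\bar{\ast}^n_{g_3,m,p}v$, expanded via $(1+z_0)^{\wt a+\lambda(m,j_2)}=\sum_j\binom{\wt a+\lambda(m,j_2)}{j}z_0^j$, is a \emph{double} sum over $i$ and $j$ of modes $a_{(p-n-\lambda(m,j_2)-1-i+j)}v$; your kernel only recovers the $j=0$ stratum. The factor $(z_2+z_0)^{\wt a+\lambda(m,j_2)}$ must survive (expanded in non-negative powers of $z_0$) precisely so that its binomial coefficients become the $(1+z_0)^{\wt a+\lambda(m,j_2)}$ inside $\bar\ast$. Relatedly, the $z_2$-exponent in your kernel is fixed at $\deg v-1-(p-m)$, whereas extracting $o^I_{n,m}$ on a sum of vectors of varying degree $\deg(a_{(\cdot)}v)$ requires the $z_2$-exponent to shift with $i$ and $j$; in the paper this happens automatically because the $z_2$-powers come from expanding $(z_2+z_0)^{\wt a+\lambda(m,j_2)}$ together with the kernel.

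Second, you treat ``the substitution $z_1=z_0+z_2$'' as a free move, but passing from $\Res_{z_1}Y_{M^3}(a,z_1)$ to $\Res_{z_0}\Res_{z_2}Y_{M^3}(a,z_0+z_2)$ requires the formal $\delta$-function identity (the step $z_0^{-1}\delta(\tfrac{z_1-z_2}{z_0})=z_1^{-1}\delta(\tfrac{z_2+z_0}{z_1})$), and that step is only licit once one checks that $z_1^{\wt a-1+p-n}Y_{M^3}(a,z_1)$ involves only integer powers of $z_1$. That in turn forces the preliminary reduction to the case $\overline{\Tilde p-\Tilde n-j_1-j_2}=0$ (otherwise both sides of \eqref{eq:reason-for-left-action} vanish identically), which your proposal never states. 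Without that reduction and the $\delta$-substitution, the ``resulting operator is $Y_{M^3}(a,z_1)I^\circ(v,z_2)v_2$'' step is not justified. The correct kernel, once one unfolds the computation, is
\begin{equation*}
K(z_0,z_2)=z_2^{\deg v-1+m-p}\sum_{i=0}^{\lfloor p\rfloor}\binom{p-n-\lambda(m,j_2)-1}{i}z_0^{\,p-n-\lambda(m,j_2)-1-i}z_2^{\,i},
\end{equation*}
which is a truncated expansion of $(z_0+z_2)^{p-n-\lambda(m,j_2)-1}$ in non-negative powers of $z_2$ (the truncation coming from $\Res_{z_2}z_2^{\deg v-1+m-p+k}I^\circ(v,z_2)v_2=0$ for $k>p$), and it does \emph{not} involve $(z_0+z_2)^{-(\wt a+\lambda(m,j_2))}$. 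You should redo the residue bookkeeping with this kernel, carry the $(z_2+z_0)^{\wt a+\lambda(m,j_2)}$ factor through, and use $\binom{\alpha}{i}=(-1)^i\binom{-\alpha+i-1}{i}$ to match the signs and binomials in \eqref{eq:left-composition}.
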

\begin{proof}
    Without loss of generality, we assume $a\in V^{(j_1, j_2)}$ and $v$ to be homogeneous. We also assume that $\overline{\Tilde{p}-\Tilde{n}-j_1-j_2}=0$ since both sides of \eqref{eq:reason-for-left-action} are 0 otherwise.
    \begin{align*}
        &o^{M^3}_{n, p}(a)o^I_{p, m}(v)v_2\\
        =&\Res_{z_1}\Res_{z_2}z_1^{\wt a-1+p-n}z_2^{\deg v-1+m-p}Y_{M^3}(a, z_1)I^{\circ}(v, z_2)v_2\\
        =&\Res_{z_0}\Res_{z_1}\Res_{z_2}z_0^{-1}\de(\frac{z_1-z_2}{z_0})z_1^{\wt a-1+p-n}z_2^{\deg v-1+m-p}Y_{M^3}(a, z_1)I^{\circ}(v, z_2)v_2\\
        =&\Res_{z_0}\Res_{z_1}\Res_{z_2}z_1^{-1}\de(\frac{z_0+z_2}{z_1})z_1^{\wt a-1+p-n}z_2^{\deg v-1+m-p}Y_{M^3}(a, z_1)I^{\circ}(v, z_2)v_2\\
        \intertext{since $z_1^{\wt a-1+p-n}Y_{M^3}(a, z_1)$ only involves integer powers of $z_1$ by our assumption,}
        =&\Res_{z_0}\Res_{z_2}(z_0+z_2)^{\wt a-1+p-n}z_2^{\deg v-1+m-p}Y_{M^3}(a, z_0+z_2)I^{\circ}(v, z_2)v_2\\
        =&\Res_{z_0}\Res_{z_2}z_2^{\deg v-1+m-p}(z_0+z_2)^{p-n-\la(m, j_2)-1}\\
        &\times (z_0+z_2)^{\wt a+\la(m, j_2)}Y_{M^3}(a, z_0+z_2)I^{\circ}(v, z_2)v_2,\\
        \intertext{since $\Res_{z_2}z_2^{\deg v-1+m-p}z_2^{k}I^{\circ}(v, z_2)v_2=0$ for $k>p$,}
        =&\Res_{z_0}\Res_{z_2}z_2^{\deg v-1+m-p}\sum_{i=0}^{\lf p\rf}\binom{p-n-\la(m, j_2)-1}{i}z_0^{p-n-\la(m, j_2)-1-i}z_2^i\\
        &\times (z_0+z_2)^{\wt a+\la(m, j_2)}Y_{M^3}(a, z_0+z_2)I^{\circ}(v, z_2)v_2,\\
        \intertext{by \eqref{eq:associativity-1} and Remark \ref{rmk:choice-of-powers},}
        =&\Res_{z_0}\Res_{z_2}z_2^{\deg v-1+m-p}\sum_{i=0}^{\lf p \rf}\binom{p-n-\la(m, j_2)-1}{i}z_0^{p-n-\la(m, j_2)-1-i}z_2^i\\
        &\times (z_2+z_0)^{\wt a+\la(m, j_2)}I^{\circ}(Y_{M^1}(a, z_0)v, z_2)v_2\\
        =&\Res_{z_0}\Res_{z_2}\sum_{i=0}^{\lf p \rf}\sum_{j\geq 0}\binom{p-n-\la(m, j_2)-1}{i}\binom{\wt a+\la(m, j_2)}{j}\\
        &\times z_0^{p-n-\la(m, j_2)-1-i+j}z_2^{\deg v-1+m-p+\wt a+\la(m, j_2)-j+i}I^{\circ}(Y_{M^1}(a, z_0)v, z_2)v_2,\\
        \intertext{since $\binom{\alpha}{i}=(-1)^i\binom{-\alpha+i-1}{i}$ for $\alpha\in \C$,}
        =&o^{I}_{n, m}\left(\Res_{z_0}\sum_{i=0}^{\lf p \rf}(-1)^i\binom{\la(m, j_2)+n-p+i}{i}\frac{(1+z_0)^{\wt a+\la(m, j_2)}}{z_0^{\la(m, j_2)+n-p+i+1}}
        Y_{M^1}(a, z_0)v\right)v_2\\
        =&o^I_{n, m}(a\bar{\ast}_{g_3, m, p}^n v)v_2.
    \end{align*}
\end{proof}

\begin{proposition}\label{prop:reason-for-right-action}
    Let $a\in V$, $v\in M^1$, $m, p, n\in \frac{1}{T}\N$ and $v_2\in \Omega_m(M^2)$. Then we have
    \begin{equation}\label{eq:reason-for-right-action}
        o^I_{n, m}(v\underline{\ast}_{g_2, m, p}^n a)v_2=o^I_{n, p}(v)o^{M^2}_{p, m}(a)v_2.
    \end{equation}
\end{proposition}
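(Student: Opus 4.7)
The proof follows the same template as Proposition \ref{prop:reason-for-left-action}, with the second associativity \eqref{eq:associativity-2} replacing \eqref{eq:associativity-1}. As there, I may assume $a\in V^{(j_1,j_2)}$ and $v$ are homogeneous, and in addition assume $\overline{\Tilde{m}-\Tilde{p}-j_2}=0$, since otherwise $o^{M^2}_{p,m}(a)v_2=0$ and both sides of \eqref{eq:reason-for-right-action} vanish.

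Beginning with the right-hand side written as a double residue,
\begin{equation*}
o^I_{n,p}(v)\,o^{M^2}_{p,m}(a)\,v_2=\Res_{z_2}\Res_{z_1}\,z_2^{\deg v-1+p-n}z_1^{\wt a-1+m-p}\,I^{\circ}(v,z_2)\,Y_{M^2}(a,z_1)\,v_2,
\end{equation*}
the parity assumption guarantees that $z_1^{\wt a-1+m-p}Y_{M^2}(a,z_1)$ contains only integer powers of $z_1$, so I may insert an appropriate $\delta$-function and substitute $-z_0+z_1$ for $z_2$ inside the $I^{\circ}$ slot. Applying \eqref{eq:associativity-2} with $a$ replaced by $g_1^{-1}a$ (so that $Y_{M^2}(g_1 g_1^{-1}a,z_1)=Y_{M^2}(a,z_1)$ on the left-hand side of the associativity), the integrand is converted into the form $(z_1-z_0)^{k'+r/T}I^{\circ}_{\overline{r+j_2}}\bigl(Y_{M^1}(g_1^{-1}a,z_0)v,\,z_1-z_0\bigr)v_2$, summed over the relevant $r$.

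Next, performing the $z_1$-residue, expanding $(z_1-z_0)^{k'+r/T}$ and $I^{\circ}_{\overline{r+j_2}}(\cdot,z_1-z_0)$ binomially in $z_0/z_1$ so that the $z_1$-integration collapses, and then relabelling the outer variable $z_0$ as $z$, the expression becomes exactly \eqref{eq:right-composition} substituted into $o^I_{n,m}(\cdot)v_2$, proving the claim.

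The main obstacle is purely combinatorial: verifying how the sign $(-1)^{p-m-\la(n,j_3^{\vee})}$ and the exponent $\la(m,j_2)-\lf p\rf+i-1$ on $(1+z)$ in \eqref{eq:right-composition} emerge from the minimal-exponent choice $k'+r/T=\lf\deg v\rf+\la_t(m,r)$ provided by Remark \ref{rmk:choice-of-powers}, the constraint that $\overline{r+j_1}$ be aligned with the $M^3$-twist (yielding the parameter $j_3^{\vee}=\overline{-j_1-j_2}$), and the sign discrepancy between the $(-z_0+z_1)$ and $(z_1-z_0)$ expansions. Once these pieces are tracked, the binomial identity $\binom{\alpha}{i}=(-1)^i\binom{-\alpha+i-1}{i}$ used in the proof of Proposition \ref{prop:reason-for-left-action} reshapes the coefficients into the precise form of \eqref{eq:right-composition}.
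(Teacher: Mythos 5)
Your proposal is correct and follows essentially the same route as the paper's proof: reduce to the homogeneous case with $\overline{\Tilde{m}-\Tilde{p}-j_2}=0$, write the right-hand side as a double residue, insert the $\delta$-function $z_2^{-1}\delta\bigl(\frac{-z_0+z_1}{z_2}\bigr)$ to convert $I^{\circ}(v,z_2)$ into $I^{\circ}_{\overline{t+\Tilde{p}-\Tilde{n}}}(v,-z_0+z_1)$, apply the second associativity \eqref{eq:associativity-2} with $a$ replaced by $g_1^{-1}a$, and then unwind the binomial expansions. You also correctly flag the two nontrivial bookkeeping steps: the minimal exponent $k'+\tfrac{r}{T}=\lfloor\deg v\rfloor+\la_t(m,r)$ from Remark \ref{rmk:choice-of-powers}, and the appendix identity (Lemma \ref{lem: for-simplifying-right-action}) that converts $\la_t(m,r)+n-\tfrac{t}{T}$ into $\la(n,j_3^{\vee})+m$, which is what produces both the sign $(-1)^{p-m-\la(n,j_3^{\vee})}$ and the denominator exponent in \eqref{eq:right-composition}. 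The sketch omits the actual residue computation, but every ingredient you name is the one the paper uses.
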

\begin{proof}
    Without loss of generality, we assume $a\in V^{(j_1, j_2)}$ and $v$ to be homogeneous. We also assume that $\overline{\Tilde{m}-\Tilde{p}-j_2}=0$, since both sides of \eqref{eq:reason-for-right-action} are 0 otherwise. For convenience, let $\lf \deg v \rf=s$, $\widetilde{\deg v}=t$, and $r=\overline{t+\Tilde{p}-\Tilde{n}-j_1}$. Note that $\deg v=s+\frac{t}{T}$.
    \begin{align*}
        &o^I_{n, p}(v)o^{M^2}_{p, m}(a)v_2\\
        =&\Res_{z_1}\Res_{z_2}z_1^{\wt a-1+m-p}z_2^{\deg v-1+p-n}I^{\circ}(v, z_2)Y_{M^2}(a, z_1)v_2\\
        =&\Res_{-z_0}\Res_{z_1}\Res_{z_2}z_2^{-1}\de(\frac{-z_0+z_1}{z_2})z_1^{\wt a-1+m-p}z_2^{\deg v-1+p-n}I^{\circ}(v, z_2)Y_{M^2}(a, z_1)v_2\\
        =&\Res_{-z_0}\Res_{z_1}\Res_{z_2}z_2^{-1}\de(\frac{-z_0+z_1}{z_2})z_1^{\wt a-1+m-p}z_2^{\deg v-1+p-n}I^{\circ}_{\overline{t+\Tilde{p}-\Tilde{n}}}(v, z_2)Y_{M^2}(a, z_1)v_2\\
        =&\Res_{-z_0}\Res_{z_1}z_1^{\wt a-1+m-p}(-z_0+z_1)^{\deg v-1+p-n}I^{\circ}_{\overline{t+\Tilde{p}-\Tilde{n}}}(v, -z_0+z_1)Y_{M^2}(a, z_1)v_2\\
        =&\Res_{-z_0}\Res_{z_1}(-z_0+z_1)^{p-n-\la_{t}(m, r)-1+\frac{t}{T}}z_1^{\wt a-1+m-p}\\
        &\times (-z_0+z_1)^{s+\la_{t}(m, r)}I^{\circ}_{\overline{r+j_1}}(v, -z_0+z_1)Y_{M^2}(a, z_1)v_2,\\
        \intertext{since $\Res_{z_1}z_1^{\wt a-1+m-p}z_1^kY_{M^2}(a, z_1)v_2=0$ for $k>p$,}
        =&\Res_{-z_0}\Res_{z_1}\sum_{i=0}^{\lf p \rf}\binom{p-n-\la_{t}(m, r)-1+\frac{t}{T}}{i}(-z_0)^{p-n-\la_{t}(m, r)-1+\frac{t}{T}-i}z_1^i\\
        &\times z_1^{\wt a-1+m-p}(-z_0+z_1)^{s+\la_{t}(m, r)}I^{\circ}_{\overline{r+j_1}}(v, -z_0+z_1)Y_{M^2}(a, z_1)v_2,\\
        \intertext{by Proposition \ref{prop:associativity} and Remark \ref{rmk:choice-of-powers},}
        =&\Res_{-z_0}\Res_{z_1}\sum_{i=0}^{\lf p \rf}\binom{p-n-\la_{t}(m, r)-1+\frac{t}{T}}{i}(-z_0)^{p-n-\la_{t}(m, r)-1+\frac{t}{T}-i}z_1^i\\
        &\times z_1^{\wt a-1+m-p}(z_1-z_0)^{s+\la_{t}(m, r)}I^{\circ}_{\overline{r+j_2}}(Y_{M^1}(g_1^{-1}a, z_0)v, z_1-z_0)v_2\\
        =&\Res_{-z_0}\Res_{z_1}\sum_{i=0}^{\lf p \rf}\binom{p-n-\la_{t}(m, r)-1+\frac{t}{T}}{i}(-z_0)^{p-n-\la_{t}(m, r)-1+\frac{t}{T}-i}z_1^i\\
        &\times z_1^{\wt a-1+m-p}(z_1-z_0)^{s+\la_{t}(m, r)}\sum_{l\in \frac{j_1}{T}+\Z}\sum_{q\in \frac{\overline{r+j_2}}{T}+\Z}z_0^{-l-1}(z_1-z_0)^{-q-1} \left((g_1^{-1}a)_{(l)}v\right)(q)v_2\\
        =&\Res_{-z_0}\Res_{z_1}\sum_{i=0}^{\lf p \rf}\binom{p-n-\la_{t}(m, r)-1+\frac{t}{T}}{i}(-z_0)^{p-n-\la_{t}(m, r)-1+\frac{t}{T}-i}z_1^{\wt a-1+m-p+i}\\
        &\times \sum_{l\in \frac{j_1}{T}+\Z}\sum_{q\in \frac{\overline{r+j_2}}{T}+\Z}z_0^{-l-1}(z_1-z_0)^{s+\la_{t}(m, r)-q-1}\left((g_1^{-1}a)_{(l)}v\right)(q)v_2\\
        =&\Res_{-z_0}\Res_{z_1}\sum_{i=0}^{\lf p \rf}\binom{p-n-\la_{t}(m, r)-1+\frac{t}{T}}{i}(-z_0)^{p-n-\la_{t}(m, r)-1+\frac{t}{T}-i}z_1^{\wt a-1+m-p+i}\\
        &\times \sum_{l\in \frac{j_1}{T}+\Z}\sum_{q\in \frac{\overline{r+j_2}}{T}+\Z}\sum_{j\geq0}\binom{s+\la_{t}(m, r)-q-1}{j}z_0^{-l-1}z_1^{s+\la_{t}(m, r)-q-1-j}(-z_0)^j\\
        &\times \left((g_1^{-1}a)_{(l)}v\right)(q)v_2\\
        =&\sum_{i=0}^{\lf p \rf}\sum_{j\geq0}(-1)^{p-n-\la_{t}(m, r)+\frac{t}{T}-i+j}\binom{p-n-\la_{t}(m, r)-1+\frac{t}{T}}{i}\binom{-\wt a-m+p-i+j}{j}\\
        &\times \left((g_1^{-1}a)_{(p-n-\la_{t}(m, r)-1+\frac{t}{T}-i+j)}v\right)_{(\wt a-1+m-p+s+\la_{t}(m, r)+i-j)}v_2\\
        =&\sum_{i=0}^{\lf p \rf}\sum_{j\geq0}(-1)^{p-n-\la_{t}(m, r)+\frac{t}{T}+i}\binom{p-n-\la_{t}(m, r)-1+\frac{t}{T}}{i}\binom{\wt a+m-p+i-1}{j}\\
        &\times \left((g_1^{-1}a)_{(p-n-\la_{t}(m, r)-1+\frac{t}{T}-i+j)}v\right)_{(\wt a-1+m-p+s+\la_{t}(m, r)+i-j)}v_2\\
        =&o^I_{n, m}\bigg(\Res_z\sum_{i=0}^{\lf p \rf}(-1)^{p-n-\la_{t}(m, r)+\frac{t}{T}}\binom{\la_{t}(m, r)+n-p-\frac{t}{T}+i}{i}\\
        &\times \frac{(1+z)^{\wt a+\la(m, j_2)-\lf p \rf+i-1}}{z^{\la_{t}(m, r)+n-p-\frac{t}{T}+i+1}}Y_{M^1}(g_1^{-1}a, z)v\bigg)v_2\\
        \intertext{by Lemma \ref{lem: for-simplifying-right-action} in the Appendix, }
        =&o^I_{n, m}(v\underline{\ast}_{g_2, m, p}^n a)v_2.
    \end{align*}
\end{proof}
Set $p=n$ in Proposition \ref{prop:reason-for-left-action} and $p=m$ in Proposition \ref{prop:reason-for-right-action}, we see that $S_{n, m}(I)$ is closed under left and right actions given by \eqref{def:left-action} and \eqref{def:right-action}. Hence 
\begin{proposition}\label{prop:S_{n, m}(I)}
    For any intertwining operator $I\in \mathcal{I}\binom{M^3}{M^1M^2}$, $S_{n, m}(I)$ is an $A_{g_3, n}(V)$-$A_{g_2, m}(V)$-bimodule.
\end{proposition}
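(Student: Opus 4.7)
The plan is to follow the remark already made just above the statement: the two associativity calculations carried out in Propositions \ref{prop:reason-for-left-action} and \ref{prop:reason-for-right-action} do essentially all the work, and what remains is to organize them into a bimodule structure.

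First, I would record that $\operatorname{Hom}\bigl(\Omega_m(M^2),\Omega_n(M^3)\bigr)$ is naturally an $A_{g_3,n}(V)$-$A_{g_2,m}(V)$-bimodule. By Theorem \ref{thm:twistedZhu}(3), $\Omega_n(M^3)$ is a left $A_{g_3,n}(V)$-module via $[a]\mapsto o^{M^3}(a)$ and $\Omega_m(M^2)$ is a left $A_{g_2,m}(V)$-module via $[b]\mapsto o^{M^2}(b)$; pre- and post-composition then give the actions \eqref{def:left-action} and \eqref{def:right-action}, and the bimodule identity $([a]\ast f)\ast [b]=[a]\ast (f\ast [b])$ is immediate from associativity of composition of linear maps.

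Second, I would verify closure of $S_{n,m}(I)$ under the two actions. Specializing Proposition \ref{prop:reason-for-left-action} to $p=n$ gives
\[
[a]\ast o^I_{n,m}(v)=o^{M^3}(a)\circ o^I_{n,m}(v)=o^I_{n,m}\bigl(a\,\bar{\ast}_{g_3,m}^{\,n}\,v\bigr),
\]
so the left action sends generators of $S_{n,m}(I)$ back into $S_{n,m}(I)$. Similarly, setting $p=m$ in Proposition \ref{prop:reason-for-right-action} gives
\[
o^I_{n,m}(v)\ast [a]=o^I_{n,m}(v)\circ o^{M^2}(a)=o^I_{n,m}\bigl(v\,\underline{\ast}_{g_2,m}^{\,n}\,a\bigr),
\]
giving closure under the right action. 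Linearly extending in $v$ (and $a$) shows $S_{n,m}(I)$ is a $V$-sub-bi-stable subspace.

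Third, I would observe that these actions descend through the defining ideals. On the left, if $a\in O_{g_3,n}(V)$ then $o^{M^3}(a)=0$ on $\Omega_n(M^3)$ by Proposition \ref{prop:o-vanishing}, so $[a]\ast o^I_{n,m}(v)=o^{M^3}(a)\circ o^I_{n,m}(v)=0$; the same proposition gives $o^{M^3}(a\ast_{g_3,n}b)=o^{M^3}(a)o^{M^3}(b)$ on $\Omega_n(M^3)$, so the $V$-action factors through $A_{g_3,n}(V)$ and is associative. The argument on the right is symmetric, using $\Omega_m(M^2)$ and Proposition \ref{prop:o-vanishing} for $g_2$. Combined with the first paragraph, this gives the $A_{g_3,n}(V)$-$A_{g_2,m}(V)$-bimodule structure on $S_{n,m}(I)$.

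There is no real obstacle left — the genuine content, which is that the auxiliary operations $\bar{\ast}_{g_3,m}^{\,n}$ and $\underline{\ast}_{g_2,m}^{\,n}$ implement composition with $o^{M^3}(a)$ and $o^{M^2}(a)$ on the image of $o^I_{n,m}$, is precisely what Propositions \ref{prop:reason-for-left-action} and \ref{prop:reason-for-right-action} establish. The only mild point worth stating explicitly in the write-up is that well-definedness and associativity of the descended actions both reduce to the same two facts from Proposition \ref{prop:o-vanishing}, applied once to $M^3$ and once to $M^2$.
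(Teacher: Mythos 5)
Your proposal is correct and takes essentially the same route as the paper: the paper also notes that $\operatorname{Hom}\bigl(\Omega_m(M^2),\Omega_n(M^3)\bigr)$ is an $A_{g_3,n}(V)$-$A_{g_2,m}(V)$-bimodule via Theorem \ref{thm:twistedZhu}, and then derives closure of $S_{n,m}(I)$ by specializing $p=n$ in Proposition \ref{prop:reason-for-left-action} and $p=m$ in Proposition \ref{prop:reason-for-right-action}. Your third paragraph merely unpacks what is already packaged in Theorem \ref{thm:twistedZhu}(3); this is fine, just more explicit than what the paper writes.
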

\begin{proof}
    It follows from Proposition \ref{prop:reason-for-left-action} and Proposition \ref{prop:reason-for-right-action}.
\end{proof}
Regarding $o_{n, m}^{I}$ as a linear map from $M^1$ to $\Hom\big(\Omega_m(M^2), \Omega_n(M^3)\big)$. As vector spaces, $S_{n, m}(I)\cong M^1/\ker o^{I}_{n, m}$, so we can impose an $A_{g_3, n}(V)$-$A_{g_2, m}(V)$-bimodule structure on $M^1/\ker o^{I}_{n, m}$. As the notation suggests, $S_{n, m}(I)$ depends on the twisted intertwining operator $I$. We now construct an $A_{g_3, n}(V)$-$A_{g_2, m}(V)$-bimodule $\mathcal{A}_{g_3, g_2, n, m}(M^1)$ that does not.

Let $\mathcal{P}=\{I \mid I\in \mathcal{I}\binom{M^3}{M^1M^2} \text{ for some } M^2\in \text{Irr}_{g_2}(V) \text{ and } M^3\in \text{Irr}_{g_3}(V)\}$, and 
\begin{align*}
    &\mathcal{O}_{g_3, g_2, n, m}(M^1)=\bigcap_{I\in \mathcal{P}}\ker o^{I}_{n, m}\\
    =&\big\{v\in M^1\mid o^{I}_{n, m}(v)=0 \text{ for all } M^2\in \text{Irr}_{g_2}(V), M^3\in \text{Irr}_{g_3}(V), I\in \mathcal{I}\binom{M^3}{M^1M^2}\big\}.
\end{align*}
Then define 
\begin{equation}
    \mathcal{A}_{g_3, g_2, n, m}(M^1)=M^1/\mathcal{O}_{g_3, g_2, n, m}(M^1).
\end{equation}
We have the following two results:
\begin{proposition}\label{prop:star-transpotation}
    For $a\in V$, $v\in M^1$, and $p\in \frac{1}{T}\N$, we have 
    \begin{equation}\label{eq:a-star-O}
        a\bar{\ast}_{g_3, m, p}^n \mathcal{O}_{g_3, g_2, p, m}(M^1)\subseteq \mathcal{O}_{g_3, g_2, n, m}(M^1),
    \end{equation}
    \begin{equation}\label{eq:O-star-a}
         \mathcal{O}_{g_3, g_2, n, p}(M^1)\underline{\ast}_{g_2, m, p}^{n} a\subseteq \mathcal{O}_{g_3, g_2, n, m}(M^1),
    \end{equation}   
    \begin{equation}\label{eq:O-star-v}
        \mathcal{O}_{g_3, g_3, n, p}(V)\bar{\ast}_{g_3, m, p}^{n}v\subseteq \mathcal{O}_{g_3, g_2, n, m}(M^1),
    \end{equation}    
    \begin{equation}\label{eq:v-star-O}
        v\underline{\ast}_{g_2, m, p}^{n}\mathcal{O}_{g_2, g_2, p, m}(V)\subseteq \mathcal{O}_{g_3, g_2, n, m}(M^1).
    \end{equation}
\end{proposition}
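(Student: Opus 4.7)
The plan is to reduce all four inclusions to direct applications of the two ``action transposition'' formulas established in Propositions \ref{prop:reason-for-left-action} and \ref{prop:reason-for-right-action}, combined with the defining description of $\mathcal{O}_{g_3, g_2, n, m}(M^1)$ as the common kernel $\bigcap_{I \in \mathcal{P}} \ker o^{I}_{n, m}$. Each inclusion amounts to showing that, for an element $w \in M^1$ of the appropriate form, $o^{I}_{n, m}(w) v_2 = 0$ for every $I \in \mathcal{P}$ and every $v_2 \in \Omega_m(M^2)$; by linearity together with Proposition \ref{prop:rationality}, I may reduce throughout to irreducible $M^2$ and $M^3$.

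For \eqref{eq:a-star-O}, take $v \in \mathcal{O}_{g_3, g_2, p, m}(M^1)$ and apply Proposition \ref{prop:reason-for-left-action} to get
\[
o^{I}_{n, m}(a \bar{\ast}_{g_3, m, p}^n v) v_2 = o^{M^3}_{n, p}(a)\, o^{I}_{p, m}(v) v_2,
\]
whose right-hand side vanishes because $o^{I}_{p, m}(v) = 0$ by hypothesis. For \eqref{eq:O-star-a}, take $v \in \mathcal{O}_{g_3, g_2, n, p}(M^1)$ and use Proposition \ref{prop:reason-for-right-action} to rewrite
\[
o^{I}_{n, m}(v \underline{\ast}_{g_2, m, p}^n a) v_2 = o^{I}_{n, p}(v)\, o^{M^2}_{p, m}(a) v_2;
\]
one must check that $o^{M^2}_{p, m}(a) v_2$ lies in $\Omega_p(M^2)$ before invoking the vanishing of $o^{I}_{n, p}(v)$ on that subspace, but this follows at once from the description $\Omega_p(M^2) = \bigoplus_{i \le p} M^2(i)$ for irreducible $M^2$ (Theorem \ref{thm:twistedZhu}(4)) together with the admissible weight-shift rule $a_{(\wt a - 1 - k)} M^2(i) \subseteq M^2(i + k)$.

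Inclusions \eqref{eq:O-star-v} and \eqref{eq:v-star-O} run along the same lines with $V$ playing the role of $M^1$ and the module map $Y_{M^3}$ (respectively $Y_{M^2}$), viewed as the intertwining operator of type $\binom{M^3}{V M^3}$ (respectively $\binom{M^2}{V M^2}$), playing the role of $I$. For \eqref{eq:O-star-v}, apply Proposition \ref{prop:reason-for-left-action} with $a$ replaced by $b \in \mathcal{O}_{g_3, g_3, n, p}(V)$ to obtain
\[
o^{I}_{n, m}(b \bar{\ast}_{g_3, m, p}^n v)\, v_2 = o^{M^3}_{n, p}(b)\, o^{I}_{p, m}(v) v_2;
\]
the factor $o^{I}_{p, m}(v) v_2$ lies in $\Omega_p(M^3)$ by Proposition \ref{prop:shifted-TIO} and the same degree bookkeeping as above, and $o^{M^3}_{n, p}(b)$ annihilates that subspace precisely because $b \in \mathcal{O}_{g_3, g_3, n, p}(V)$. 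Part \eqref{eq:v-star-O} is entirely parallel via Proposition \ref{prop:reason-for-right-action}; here the factor $o^{M^2}_{p, m}(b) v_2$ vanishes outright from the membership hypothesis, so no $\Omega_p$-containment check is needed.

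The only genuinely delicate point, and hence the main obstacle, is verifying the $\Omega_p$-containments for the intermediate vectors in parts \eqref{eq:O-star-a} and \eqref{eq:O-star-v}. This is where strong rationality is quietly being used: irreducibility of the targets collapses $\Omega_p$ to the finite direct sum $\bigoplus_{i \le p} M(i)$, and Proposition \ref{prop:shifted-TIO} with the admissible weight-shift rule then produces the required bounds. With this bookkeeping in place, each of the four inclusions reduces to a two-line application of the corresponding action-transposition formula.
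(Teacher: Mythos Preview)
Your proof is correct and follows essentially the same route as the paper: both reduce each inclusion to the action-transposition formulas of Propositions \ref{prop:reason-for-left-action} and \ref{prop:reason-for-right-action}, then read off the vanishing from the definition of $\mathcal{O}$ as a common kernel. Your additional verification that the intermediate vectors $o^{I}_{p,m}(v)v_2$ and $o^{M^2}_{p,m}(a)v_2$ land in the appropriate $\Omega_p$ is a point the paper silently takes for granted, so your version is in fact more complete.
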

\begin{proof}
    Let $v\in M^1$ and $I\in \mathcal{I}\binom{M^3}{M^1M^2}$, where $M^2\in \text{Irr}_{g_2}(V)$ and $M^3\in \text{Irr}_{g_3}(V)$. By Proposition \ref{prop:reason-for-left-action}, $o^I_{n, m}(a\bar{\ast}_{g_3, m, p}^n v)=o^{M^3}_{n, p}(a)o^{I}_{p, m}(v)$ on $\Omega_m(M^2)$. Now \eqref{eq:a-star-O} holds since $o^{I}_{p, m}(v)=0$ for $v\in \mathcal{O}_{g_3, g_2, p, m}(M^1)$, \eqref{eq:O-star-v} holds since $o^{M^3}_{n, p}(a)=0$ for $a\in \mathcal{O}_{g_3, g_3, n, p}(V)$.  \eqref{eq:O-star-a} and \eqref{eq:v-star-O} can be proved similarly by using Proposition \ref{prop:reason-for-right-action}. 
\end{proof}

\begin{proposition}\label{prop:subsets-of-O}
    For $m, n, p_1, p_2\in \frac{1}{T}\N$, $a, b\in V$ and $v\in M^1$, we have 
    \begin{align}
        a\bar{\ast}_{g_3, m, p_2}^{n}(b\bar{\ast}_{g_3, m, p_1}^{p_2}v)-(a\bar{\ast}_{g_3, p_1, p_2}^n b)\bar{\ast}_{g_3, m, p_1}^{n}v&\in \mathcal{O}_{g_3, g_2, n, m}(M^1),\\
        (v\underline{\ast}_{g_2, p_1, p_2}^nb)\underline{\ast}_{g_2, m, p_1}^{n}a-v\underline{\ast}_{g_2, m, p_2}^n(b\underline{\ast}_{g_2, m, p_1}^{p_2}a)&\in \mathcal{O}_{g_3, g_2, n, m}(M^1),\\
        (a\bar{\ast}_{g_3, p_1, p_2}^{n}v)\underline{\ast}_{g_2, m, p_1}^{n}b-a\bar{\ast}_{g_3, m, p_2}^{n}(v\underline{\ast}_{g_2, m, p_1}^{p_2}b)&\in \mathcal{O}_{g_3, g_2, n, m}(M^1).
    \end{align}
\end{proposition}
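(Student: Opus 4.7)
The plan is to show that each of the three differences annihilates $\Omega_m(M^2)$ under the operator $o^I_{n, m}$ for every choice of $M^2 \in \text{Irr}_{g_2}(V)$, $M^3 \in \text{Irr}_{g_3}(V)$, and $I \in \mathcal{I}\binom{M^3}{M^1M^2}$; by the definition of $\mathcal{O}_{g_3, g_2, n, m}(M^1)$ this will place each difference in that subspace. The key mechanism is that Propositions \ref{prop:reason-for-left-action} and \ref{prop:reason-for-right-action} convert every composite of $\bar{\ast}$ and $\underline{\ast}$ operations into a composite of $o$-operators, and composition of linear maps is associative on the nose. Proposition \ref{prop:shifted-TIO} will be used repeatedly to keep track of which $\Omega_{p}$ each intermediate vector lies in, so that the propositions can be iterated.

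For the first identity, I would fix $v_2 \in \Omega_m(M^2)$ and apply Proposition \ref{prop:reason-for-left-action} twice to strip the outer then the inner $\bar{\ast}$, obtaining
\[
o^I_{n, m}\big(a\bar{\ast}_{g_3, m, p_2}^{n}(b\bar{\ast}_{g_3, m, p_1}^{p_2}v)\big)v_2 = o^{M^3}_{n, p_2}(a)\,o^{M^3}_{p_2, p_1}(b)\,o^I_{p_1, m}(v)\,v_2.
\]
For the subtracted term, one application of Proposition \ref{prop:reason-for-left-action} yields $o^{M^3}_{n, p_1}(a\bar{\ast}_{g_3, p_1, p_2}^n b)\,w$ with $w := o^I_{p_1, m}(v)v_2 \in \Omega_{p_1}(M^3)$. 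A second application, this time to the vertex operator $Y_{M^3}$ viewed as an intertwining operator of type $\binom{M^3}{V\,M^3}$ (so that in the proposition's data $g_1 = 1$ and $g_2 = g_3$), rewrites this as $o^{M^3}_{n, p_2}(a)\,o^{M^3}_{p_2, p_1}(b)\,w$, matching the first expression.

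The second identity is handled symmetrically using Proposition \ref{prop:reason-for-right-action} twice on each side, with the inner $\underline{\ast}$ on $V \times V$ unwound by the same proposition applied to $Y_{M^2} \in \mathcal{I}\binom{M^2}{V\,M^2}$; both sides will reduce to $o^I_{n, p_2}(v)\,o^{M^2}_{p_2, p_1}(b)\,o^{M^2}_{p_1, m}(a)\,v_2$. The third, mixed identity unwinds via one application of each proposition on each side: both reduce to $o^{M^3}_{n, p_2}(a)\,o^I_{p_2, p_1}(v)\,o^{M^2}_{p_1, m}(b)\,v_2$, and equality is again associativity of composition.

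The main point of care will be the bookkeeping for the $V \times V \to V$ operations $a\bar{\ast}_{g_3, p_1, p_2}^{n} b$ and $b\underline{\ast}_{g_2, m, p_1}^{p_2} a$: the indices $(j_1, j_2)$ appearing in their defining formulas must be interpreted with respect to the twisting of the module they will ultimately act on. For $\bar{\ast}$ applied to $Y_{M^3}$ the relevant decomposition is under $(1, g_3)$, so an element of $V^{(j_1, j_2)}$ enters with its $g_3$-weight $\overline{j_1 + j_2}$; for $\underline{\ast}$ applied to $Y_{M^2}$, under $(1, g_2)$. Once this bookkeeping is fixed, the applications of Propositions \ref{prop:reason-for-left-action} and \ref{prop:reason-for-right-action} to $Y_{M^3}$ and $Y_{M^2}$ are justified, and the remainder of the argument is purely formal.
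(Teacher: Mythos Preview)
Your proposal is correct and follows exactly the approach the paper takes: the paper's proof is a one-line reference to Propositions \ref{prop:reason-for-left-action}, \ref{prop:reason-for-right-action}, and the definition of $\mathcal{O}_{g_3, g_2, n, m}(M^1)$, and you have simply spelled out how those references are used. Your remark about interpreting the $V\times V$ operations $a\bar{\ast}_{g_3, p_1, p_2}^{n} b$ and $b\underline{\ast}_{g_2, m, p_1}^{p_2} a$ via the quadruples $(V,1,g_3,g_3)$ and $(V,1,g_2,g_2)$ is exactly the right bookkeeping, and is implicit in the paper's conventions (cf.\ Remark \ref{rmk:connection-with-DJ}).
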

\begin{proof}
    They are consequences of Propositions \ref{prop:reason-for-left-action}, \ref{prop:reason-for-right-action}, and the definition of $\mathcal{O}_{g_3, g_2, n, m}(M^1)$. 
\end{proof}
Now we have our first main result:
\begin{theorem}
    Let $M^1$ be an irreducible $g_1$ twisted module and $n, m\in \frac{1}{T}\N$. Then the quotient space $\mathcal{A}_{g_3, g_2, n, m}(M^1)$ associated to the quadruple $(M^1, g_1, g_2, g_3)$ is an $A_{g_3, n}(V)$-$A_{g_2, m}(V)$-bimodule with left and right action given by $\bar{\ast}_{g_3, m}^n$ and $\underline{\ast}_{g_2, m}^n$, respectively. There exists a surjective bimodule homormorphism from $\mathcal{A}_{g_3, g_2, n, m}(M^1)$ to $S_{n, m}(I)$ for any $I\in \mathcal{I}\binom{M^3}{M^1M^2}$, where $M^2\in \text{Irr}_{g_2}(V)$, $M^3\in \text{Irr}_{g_3}(V)$.
\end{theorem}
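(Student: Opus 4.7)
The plan is to assemble the theorem from Propositions \ref{prop:reason-for-left-action}--\ref{prop:subsets-of-O}. First I would verify well-definedness of the left action $[a]\cdot\overline{v}:=\overline{a\bar{\ast}_{g_3,m}^n v}$. Invariance of $\mathcal{O}_{g_3,g_2,n,m}(M^1)$ in the second slot is the $p=n$ case of \eqref{eq:a-star-O}. In the first slot one needs $O_{g_3,n}(V)\subseteq\mathcal{O}_{g_3,g_3,n,n}(V)$, which follows from Proposition \ref{prop:o-vanishing} applied to every irreducible $g_3$-twisted module together with the definition of $\mathcal{O}$; then \eqref{eq:O-star-v} at $p=n$ closes the loop. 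The right action is handled symmetrically via \eqref{eq:O-star-a} and \eqref{eq:v-star-O}.

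Next, the three displayed identities of Proposition \ref{prop:subsets-of-O}, specialized at $p_1=p_2=n$, at $p_1=p_2=m$, and at $p_1=m,\ p_2=n$, will deliver left-module associativity, right-module associativity, and left-right compatibility, respectively, modulo the identification $a\bar{\ast}_{g_3,n,n}^n b\equiv a\ast_{g_3,n}b$ on $V$ (and its right-handed analogue). That identification is a direct comparison of \eqref{eq:left-composition} with the twisted Zhu product: since $Y_V(a,z)b$ carries only integer powers of $z$, only the $j_2=0$ part of $a$ contributes, where $\la(n,0)=\lf n\rf$ makes the two formulas coincide term by term. The unit axiom $\vac\bar{\ast}_{g_3,m}^n v=v$ reduces, via $Y_{M^1}(\vac,z)=\id_{M^1}$ and $L_{(0)}\vac=0$, to the $i=0$ term of a single finite sum, which contributes $1$.

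For the surjection statement, fix $I\in\mathcal{I}\binom{M^3}{M^1M^2}$ with $M^2\in\text{Irr}_{g_2}(V)$ and $M^3\in\text{Irr}_{g_3}(V)$. The linear map $o^I_{n,m}\colon M^1\to\Hom(\Omega_m(M^2),\Omega_n(M^3))$ has image $S_{n,m}(I)$ tautologically, and its kernel contains $\mathcal{O}_{g_3,g_2,n,m}(M^1)$ by construction of the latter as the intersection of such kernels. Hence it factors through a surjection $\mathcal{A}_{g_3,g_2,n,m}(M^1)\twoheadrightarrow S_{n,m}(I)$; that this factorization intertwines the two bimodule structures is precisely Propositions \ref{prop:reason-for-left-action} and \ref{prop:reason-for-right-action} at $p=n$ and $p=m$, combined with the defining formulas \eqref{def:left-action} and \eqref{def:right-action}.

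The real obstacle has already been surmounted in the long Jacobi-identity manipulations behind Propositions \ref{prop:reason-for-left-action} and \ref{prop:reason-for-right-action}; given those, the present theorem is essentially transcription. The only new care required is the identification $\bar{\ast}_{g_3,n,n}^n=\ast_{g_3,n}$ on $V$: although the defining formulas differ cosmetically (one uses $\la(n,j_2)$, the other $\lf n\rf$), the discrepancy is invisible after restriction to the $j_2=0$ part of $V$ for the reason noted above.
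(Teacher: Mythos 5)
Your proposal is correct and recovers the paper's intended argument (the paper's own proof is a one-line citation to Propositions \ref{prop:star-transpotation} and \ref{prop:subsets-of-O}). One small refinement worth noting: in checking well-definedness of the left action with respect to the choice of representative $a\in[a]$, you route through the inclusion $O_{g_3,n}(V)\subseteq\mathcal{O}_{g_3,g_3,n,n}(V)$ together with \eqref{eq:O-star-v}, and justify the inclusion by Proposition \ref{prop:o-vanishing}. That inclusion requires $o^{I}_{n,n}(a)=0$ for \emph{every} $I\in\mathcal{I}\binom{M^3}{VM^2}$, not just for the module maps that Proposition \ref{prop:o-vanishing} covers, so strictly speaking you would also need the observation (from the $L_{(-1)}$-derivative property, Schur's lemma, and the Jacobi identity applied to $v=\vac$) that any nonzero such intertwining operator is a scalar multiple of $Y_{M^2}$. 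This step can be bypassed entirely: by Proposition \ref{prop:reason-for-left-action} at $p=n$, $o^{I}_{n,m}(a\bar{\ast}_{g_3,m}^n v)=o^{M^3}_{n,n}(a)\,o^{I}_{n,m}(v)$, and $o^{M^3}_{n,n}(a)=o^{M^3}(a)\big|_{\Omega_n(M^3)}$ vanishes for $a\in O_{g_3,n}(V)$ directly by Proposition \ref{prop:o-vanishing}, giving $a\bar{\ast}_{g_3,m}^n v\in\mathcal{O}_{g_3,g_2,n,m}(M^1)$ without invoking the inclusion at all. The rest of the write-up — the use of Proposition \ref{prop:subsets-of-O} at the three specializations of $(p_1,p_2)$, the identification $\bar{\ast}^n_{g_3,n,n}=\ast_{g_3,n}$ via $\la(n,0)=\lf n\rf$ and the vanishing of fractional-power residues, the unit axiom, and the factorization of $o^I_{n,m}$ through the quotient — matches the paper's line of reasoning.
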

\begin{proof}
    The statements follow from the construction of $\mathcal{A}_{g_3, g_2, n, m}(M^1)$ and Propositions \ref{prop:star-transpotation} and \ref{prop:subsets-of-O}. 
\end{proof}

\begin{remark}\label{rmk:connection-with-DJ}
    Suppose $g$ is an automorphism of $V$ with finite order $T$. For the quadruple $(V, 1, g, g)$, a series of $A_{g, n}(V)$-$A_{g, m}(V)$-bimodule $A_{g, n, m}(V)$ was constructed for $m, n\in \frac{1}{T}\N$ in \cite{DJ08b}. For $a\in V^{(0, j_2)}, b\in V$, the authors defined a product $a\ast_{g, m, p}^nb$ by
    \begin{equation}
        \begin{aligned}
            \MoveEqLeft
            a\ast_{g, m, p}^n b=\Res_z \sum_{i=0}^{\lf p \rf}(-1)^i\binom{\lf m \rf+\lf n \rf-\lf p \rf-1+\delta_{\Tilde{m}}(j_2)+\delta_{\Tilde{n}}(T-j_2)+i}{i}\\
            &\qquad \quad \times \frac{(1+z)^{\wt a-1+\lf m \rf+\delta_{\Tilde{m}}(j_2)+\frac{j_2}{T}}}{z^{\lf m \rf+\lf n \rf-\lf p \rf+\delta_{\Tilde{m}}(j_2)+\delta_{\Tilde{n}}(T-j_2)+i}}Y(a, z)b
        \end{aligned}   
    \end{equation}
    when $\overline{\Tilde{p}-\Tilde{n}-j_2}=0$ and $a\ast_{g, m, p}^n b=0$ otherwise. This product gives rise to the left (resp. right) action of $A_{g, n, m}(V)$ by setting $p=n$ (resp. $p=m$). It is straightforward to check that $a\overline{\ast}_{g, m, p}^{n}b=a\ast_{g, m, p}^n b$. Thus the left action $a\overline{\ast}_{g, m}^n b$ in this paper is the same as the one in \cite{DJ08b}. However, the right action $a\underline{\ast}_{g, m}^n b$ is different from the one $a\ast_{g, m}^n b$ given in \cite{DJ08b}. But they differed by an element in $\mathcal{O}_{g, g, n, m}(V)$. The reason is that in \cite{DJ08b}, the authors have proved that $o^M_{n, m}(a\ast_{g, m, p}^n b)=o^M_{n, p}(a)o^M_{p, m}(b)$ on $\Omega_m(M)$. Since the proof only used Jacobi identity, a similar argument will show that $o^I_{n, m}(a\ast_{g, m, p}^n b)=o^I_{n, p}(a)o^I_{p, m}(b)$ holds on $\Omega_m(M^2)$ for any $I\in \mathcal{I}\binom{M^3}{VM^2}$, where both $M^2$ and $M^3$ are $g$-twisted modules. Thus we have $a\underline{\ast}_{g, m, p}^{n}b-a\ast_{g, m, p}^{n}b\in \mathcal{O}_{g, g, n, m}(V)$.
\end{remark}

Let $\mathcal{C}_g$ be the category of $g$-twisted $V$-modules and 
\begin{equation}
    \mathcal{P}_0=\{I \mid I\in \mathcal{I}\binom{M^3}{M^1M^2} \text{ for some } M^2\in \mathcal{C}_{g_2}(V) \text{ and } M^3\in \mathcal{C}_{g_3}(V)\}.
\end{equation}
Then $\mathcal{P}\subseteq \mathcal{P}_0$. Define
\begin{equation}\label{eq:ker-o-general}
    \mathcal{O}^{\circ}_{g_3, g_2, n, m}(M^1)=\bigcap_{I\in \mathcal{P}_0}\ker o^{I}_{n, m}.
\end{equation}

\begin{proposition}\label{prop:reducible-M2-M3}
    Suppose $M^1$ is an irreducible $g_1$-twisted $V$-module. Then \[\mathcal{O}^{\circ}_{g_3, g_2, n, m}(M^1)=\mathcal{O}_{g_3, g_2, n, m}(M^1).\]
\end{proposition}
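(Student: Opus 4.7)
The plan is to establish both inclusions separately. The containment $\mathcal{O}^{\circ}_{g_3, g_2, n, m}(M^1) \subseteq \mathcal{O}_{g_3, g_2, n, m}(M^1)$ is immediate from $\mathcal{P} \subseteq \mathcal{P}_0$: intersecting kernels over a strictly larger family of operators only cuts out a smaller subspace.

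For the nontrivial direction, I would fix $v \in \mathcal{O}_{g_3, g_2, n, m}(M^1)$ together with an arbitrary intertwining operator $I \in \mathcal{I}\binom{M^3}{M^1 M^2}$ where $M^2 \in \mathcal{C}_{g_2}$ and $M^3 \in \mathcal{C}_{g_3}$; the goal is to show $o^I_{n, m}(v) = 0$ on $\Omega_m(M^2)$. Invoking Proposition \ref{prop:rationality}, $V$ is both $g_2$- and $g_3$-regular, so I can decompose $M^2 = \bigoplus_i M^{2, i}$ and $M^3 = \bigoplus_j M^{3, j}$ into direct sums of irreducible twisted modules, with canonical $V$-module inclusions $\iota_i \colon M^{2, i} \hookrightarrow M^2$ and projections $\pi_j \colon M^3 \twoheadrightarrow M^{3, j}$. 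I then set
\[
I_{i, j}(u, z) := \pi_j \circ I(u, z) \circ \iota_i, \qquad u \in M^1.
\]
Because $\iota_i$ and $\pi_j$ are $V$-module homomorphisms, they commute with every $Y_{M^2}$ and $Y_{M^3}$ appearing in the generalized Jacobi identity, and they also preserve the truncation property and the $L_{(-1)}$-derivative property. This yields $I_{i, j} \in \mathcal{I}\binom{M^{3, j}}{M^1 M^{2, i}}$, which lies in $\mathcal{P}$ since $M^1$, $M^{2,i}$, $M^{3,j}$ are all irreducible.

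Since the $\Omega$-construction is additive over direct-sum decompositions of modules, $\Omega_m(M^2) = \bigoplus_i \Omega_m(M^{2, i})$ and $\Omega_n(M^3) = \bigoplus_j \Omega_n(M^{3, j})$. Under these decompositions $o^I_{n, m}(v)$ becomes the block matrix $\bigoplus_{i, j} o^{I_{i, j}}_{n, m}(v)$, in accordance with the decomposition convention for $o^I_{n, m}$ stated after Proposition \ref{prop:shifted-TIO}. Each block vanishes by the hypothesis $v \in \mathcal{O}_{g_3, g_2, n, m}(M^1)$ combined with $I_{i, j} \in \mathcal{P}$. Summing gives $o^I_{n, m}(v) = 0$, hence $v \in \mathcal{O}^{\circ}_{g_3, g_2, n, m}(M^1)$.

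The only bookkeeping step is verifying that $I_{i, j}$ really satisfies the three axioms of Definition \ref{def:TIO}; this is routine since both $\iota_i$ and $\pi_j$ intertwine all $V$-actions, so the Jacobi, truncation, and $L_{(-1)}$-derivative identities for $I$ transfer term-by-term. I do not expect a genuine obstacle beyond this; the regularity assumption on $V$ does all the heavy lifting by reducing the general case to the irreducible case.
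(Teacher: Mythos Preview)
Your proposal is correct and follows essentially the same approach as the paper: both arguments use $g_2$- and $g_3$-regularity to decompose $M^2$ and $M^3$ into irreducibles, then observe that pre- and post-composing $I$ with the inclusions and projections yields intertwining operators in $\mathcal{P}$, so that $o^{I}_{n,m}(v)$ vanishes blockwise. The paper's proof is slightly terser and does not spell out the verification that each $I_{i,j}$ satisfies Definition~\ref{def:TIO}, but the substance is identical.
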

\begin{proof}
    It is obvious that $\mathcal{O}^{\circ}_{g_3, g_2, n, m}(M^1)\subseteq \mathcal{O}_{g_3, g_2, n, m}(M^1)$. 
    
    Suppose $v\in \mathcal{O}_{g_3, g_2, n, m}(M^1)$ and $I\in \mathcal{I}\binom{M^3}{M^1M^2}$ for some $M^2\in \mathcal{C}_{g_2}$ and $M^3\in \mathcal{C}_{g_3}$. Since $V$ is $g$-regular for any finite ordered automorphism $g$, $M^2=\oplus_{\lambda\in \Lambda}c_{\lambda} M^{2\lambda}$, $M^3=\oplus_{\gamma\in \Gamma}d_{\gamma}M^{3\gamma}$, where $M^{2\lambda}\in \text{Irr}_{g_2}(V)$, $M^{3\gamma}\in \text{Irr}_{g_3}(V)$, $c_{\lambda}, d_{\gamma}\in \N$, and $\Lambda, \Gamma$ are two finite index sets. For each $\lambda\in \Lambda$, we could obtain a twisted intertwining operator in $\mathcal{I}\binom{M^3}{M^1M^{2\lambda}}$ by restriction, we denote it by $I\mid_{M^{2\lambda}}$. Denote the natural projection from $M^3$ to $M^{3\gamma}$ by $P_{\gamma}$, then $P_{\gamma}\circ I\mid_{M^{2\lambda}}\in \mathcal{P}$. We denote it by $I(\lambda, \gamma)$. Now $o^{I(\lambda, \gamma)}_{n, m}(v)=0$ on $\Omega_m(M^{2\lambda})$ for any $\lambda\in \Lambda$ and $\gamma\in \Gamma$, which implies $o^{I}_{n, m}(v)=0$ on $\Omega_m(M^2)$. Thus we get $v\in \mathcal{O}^{\circ}_{g_3, g_2, n, m}(M^1)$. 
\end{proof}

\begin{remark}
    For a not necessarily irreducible $g_1$-twisted module $M^1$, we can similarly define $\mathcal{A}_{g_3, g_2, n, m}(M^1)=M^1/\mathcal{O}^\circ_{g_3, g_2, n, m}(M^1)$. One can verify that if $M^1$ has an irreducible decomposition $M^1=\oplus_{\lambda\in \Lambda} M^{1\lambda}$, then $\mathcal{O}^{\circ}_{g_3, g_2, n, m}(M^1)=\oplus_{\lambda\in \Lambda}\mathcal{O}^{\circ}_{g_3, g_2, n, m}(M^{1\lambda})$, and $\mathcal{A}_{g_3, g_2, n, m}(M^1)=\oplus_{\lambda\in \Lambda}\mathcal{A}_{g_3, g_2, n, m}(M^{1\lambda})$.
\end{remark}

\subsection{Elements in \texorpdfstring{$\mathcal{O}_{g_3, g_2, n, m}(M^1)$}{Og3, g2, n, m(M1)}}\label{subsec:O}
In this section, we investigate $\mathcal{O}_{g_3, g_2, n, m}(M^1)$ in detail. For simplicity, we still assume all involved modules are irreducible.

For any $I\in \mathcal{I}\binom{M^3}{M^1M^2}$, where $M^i$ is an irreducible $g_i$-twisted module with conformal weight $h_i$ for $i=1, 2, 3$, we shall see that we could obtain one set of elements in $\ker o^I_{n, m}$ by extracting suitable coefficients from the generalized Jacobi identity. The following calculations demonstrate how we do it.  

Consider the generalized Jacobi identity for homogeneous $a\in V^{(j_1,j_2)}$, $v \in M^1$, $v_2\in \Omega_m(M^2)$, and $0\leq j_1, j_2\leq T-1$,
    \begin{equation}\label{eq:Jacobi}    
        \begin{aligned}
            \MoveEqLeft
            z_0^{-1}\delta(\frac{z_1-z_2}{z_0})(\frac{z_1-z_2}{z_0})^{\frac{j_1}{T}}Y_{M^3}(a,z_1)I^{\circ}(v,z_2)v_2\\
            -&z_0^{-1}\delta(\frac{-z_2+z_1}{z_0})(\frac{-z_2+z_1}{z_0})^{\frac{j_1}{T}}I^{\circ}(v,z_2)Y_{M^2}(a,z_1)v_2\\
            &=z_1^{-1}\delta(\frac{z_2+z_0}{z_1})(\frac{z_2+z_0}{z_1})^{\frac{j_2}{T}}I^{\circ}(Y_{M^1}(a,z_0)v,z_2)v_2. 
        \end{aligned} 
    \end{equation}
Assume $\deg v=s+\frac{t}{T}$, where $\lf \deg v \rf=s$, $\widetilde{\deg v}=t$. Let $r\in \N$, $0\leq r<T$, and $j_3^{\vee}=\overline{-j_1-j_2}$.
Since $\Res_{z_1}z_1^{\wt a+\la(m, j_2)} Y_{M^2}(a, z_1)v_2=0$ and $\Res_{z_2}z_2^{s+\la_{t}(m, r)} I^{\circ}(v, z_2)v_2=0$, applying \[\Res_{z_1}\Res_{z_2}z_1^{\wt a+\la(m, j_2)}z_2^{s+\la_{t}(m, r)}\] to \eqref{eq:Jacobi} gives 
\begin{equation}\label{eq:to-ker o}
    \Res_{z_1}\Res_{z_2}z_1^{\wt a+\la(m, j_2)}z_2^{s+\la_{t}(m, r)}z_1^{-1}\delta(\frac{z_2+z_0}{z_1})(\frac{z_2+z_0}{z_1})^{\frac{j_2}{T}}I^{\circ}(Y_{M^1}(a,z_0)v,z_2)v_2=0.
\end{equation}
We can obtain vectors in $\ker o^I_{n, m}$ by extracting suitable coefficients of variable $z_0$ on the left-hand side of \eqref{eq:to-ker o}. To do this, for $x\in \Z$, applying $\Res_{z_0}z_0^{x+\frac{j_1}{T}}$ to \eqref{eq:to-ker o}, we obtain
\begin{align*}
    \MoveEqLeft
    \Res_{z_0}\Res_{z_1}\Res_{z_2}z_0^{x+\frac{j_1}{T}}z_1^{\wt a+\la(m, j_2)}z_2^{s+\la_{t}(m, r)}z_1^{-1}\delta(\frac{z_2+z_0}{z_1})(\frac{z_2+z_0}{z_1})^{\frac{j_2}{T}}\\
    &\hspace{1.5cm} \times I^{\circ}(Y_{M^1}(a,z_0)v,z_2)v_2\\
    =&\Res_{z_0}\Res_{z_2}z_0^{x+\frac{j_1}{T}}(z_2+z_0)^{{\wt a+\la(m, j_2)}}z_2^{s+\la_{t}(m, r)}I^{\circ}_{\overline{r+j_2}}(Y_{M^1}(a,z_0)v,z_2)v_2\\
    =&\Res_{z_2}\sum_{i\geq 0}\binom{{\wt a+\la(m, j_2)}}{i}z_2^{{\wt a+\la(m, j_2)}+{s+\la_{t}(m, r)}-i}I^{\circ}_{\overline{r+j_2}}(a_{(x+i+\frac{j_1}{T})}v, z_2)v_2\\
    =&\sum_{i\geq 0}\binom{{\wt a+\la(m, j_2)}}{i}(a_{(x+i+\frac{j_1}{T})}v)_{({\wt a+\la(m, j_2)}+{s+\la_{t}(m, r)}-i)}v_2\\
    =&0.
\end{align*}
Let \[{\wt a+\la(m, j_2)}+{s+\la_{t}(m, r)}-i=\deg (a_{(x+i+\frac{j_1}{T})}v)-1-(n-m).\]
Then we get 
\begin{equation}
    x=m-n-\la(m, j_2)-\la_{t}(m, r)+\frac{t-j_1}{T}-2.
\end{equation}
Since $x\in \Z$, we need 
\begin{equation}
    \overline{\Tilde{m}-\Tilde{n}+t-j_1-j_2-r}=0.
\end{equation}
Hence, for any $I\in \mathcal{I}\binom{M^3}{M^1M^2}$, we have 
\begin{align*}
    \MoveEqLeft
    \sum_{i\geq 0}\binom{\wt a+\la(m, j_2)}{i}(a_{(m-n-\la(m, j_2)-\la_{t}(m, r)+i+\frac{t}{T}-2)}v)_{({\wt a+\la(m, j_2)}+{s+\la_{t}(m, r)}-i)}v_2\\
    &=o^I_{n, m}\left(\sum_{i\geq 0}\binom{\wt a+\la(m, j_2)}{i}a_{(m-n-\la(m, j_2)-\la_{t}(m, r)+i+\frac{t}{T}-2)}v\right)v_2\\
    &=o^I_{n, m}\left(\Res_{z}\frac{(1+z)^{\wt a+\la(m, j_2)}}{z^{\la(m, j_2)+\la_{t}(m, r)+n-m-\frac{t}{T}+2}}Y_{M^1}(a, z)v\right)v_2\\
    &=o^I_{n, m}\left(\Res_{z}\frac{(1+z)^{\wt a+\la(m, j_2)}}{z^{\la(m, j_2)+\la(n, j_3^{\vee})+2}}Y_{M^1}(a, z)v\right)v_2\\
    &=0,
\end{align*}
where the last equality follows from Lemma \ref{lem:for-simplifying-circ-product} in the Appendix.

Now for homogeneous $a\in V^{(j_1, j_2)}$ and $v\in M^1$, define
\begin{equation}
    a\circ_{g_3, g_2, m}^n v=\Res_{z}\frac{(1+z)^{\wt a+\la(m, j_2)}}{z^{\la(m, j_2)+\la(n, j_3^{\vee})+2}}Y_{M^1}(a, z)v,
\end{equation}
and linearly extend it to $V\times M^1$. Summarizing the above calculations, we have 
\begin{proposition}\label{prop:some-vectors-in ker O}
    For $a\in V^{(j_1, j_2)}$ homogeneous, $v\in M^1$, 
    \[a\circ_{g_3, g_2, m}^n v\in \mathcal{O}_{g_3, g_2, n, m}(M^1).\]
\end{proposition}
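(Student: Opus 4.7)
The plan is essentially to formalize the computation already outlined just above the statement: starting from the generalized Jacobi identity in Definition \ref{def:TIO} for an arbitrary nonzero $I \in \mathcal{I}\binom{M^3}{M^1 M^2}$ with $M^2 \in \text{Irr}_{g_2}(V)$, $M^3 \in \text{Irr}_{g_3}(V)$, I want to extract a scalar identity witnessing $o^I_{n, m}(a \circ_{g_3, g_2, m}^n v) v_2 = 0$ for every $v_2 \in \Omega_m(M^2)$, and then invoke the definition of $\mathcal{O}_{g_3, g_2, n, m}(M^1)$ as the intersection over all such $I$.

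First I would reduce to the case of homogeneous $a \in V^{(j_1, j_2)}$ and homogeneous $v \in M^1$ with $\deg v = s + t/T$, $s = \lfloor \deg v \rfloor$, $t = \widetilde{\deg v}$, and fix the unique residue $r \in \N \cap [0, T)$ determined by $\overline{\tilde{m} - \tilde{n} + t - j_1 - j_2 - r} = 0$ (for other values of $r$ both the target element and the candidate identity are zero for trivial parity reasons). The key move is to apply the operator
\[
\Res_{z_1} \Res_{z_2}\, z_1^{\wt a + \lambda(m, j_2)}\, z_2^{s + \lambda_t(m, r)}
\]
to the Jacobi identity for $I^\circ$. By Remark \ref{rmk:choice-of-powers}, these exponents are exactly the minimal ones making $z_1^{\wt a + \lambda(m, j_2)} Y_{M^2}(a, z_1) v_2 \in M^2[[z_1]]$ (using $v_2 \in \Omega_m(M^2)$) and $z_2^{s + \lambda_t(m, r)} I^\circ_r(v, z_2) v_2 \in M^3[[z_2]]$. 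Consequently the two left-hand terms of the Jacobi identity vanish under these residues, and one is left only with the right-hand side involving $I^\circ(Y_{M^1}(a, z_0) v, z_2) v_2$.

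Next I would extract the coefficient of $z_0^{x + j_1/T}$ for the unique $x$ making the surviving $z_2$-residue fall on the weight-$(n-m)$ component, namely
\[
x = m - n - \lambda(m, j_2) - \lambda_t(m, r) + \tfrac{t - j_1}{T} - 2.
\]
After expanding $z_1^{-1} \delta((z_2 + z_0)/z_1)((z_2+z_0)/z_1)^{j_2/T}$ and collecting powers of $z_0$, the leftover sum repackages as $o^I_{n, m}$ applied to a single element of $M^1$, namely
\[
\Res_z \frac{(1+z)^{\wt a + \lambda(m, j_2)}}{z^{\lambda(m, j_2) + \lambda_t(m, r) - t/T + 2}} Y_{M^1}(a, z) v.
\]
Invoking Lemma \ref{lem:for-simplifying-circ-product} to rewrite $\lambda_t(m, r) - t/T$ as $\lambda(n, j_3^\vee)$ in the denominator identifies the expression inside $o^I_{n, m}$ with $a \circ_{g_3, g_2, m}^n v$, finishing the argument since $I$ was arbitrary.

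The main obstacle is purely bookkeeping: verifying that the chosen $x$ is an integer (forced by the residue condition on $r$), that the combined $z_2$-weight matches the definition of $o^I_{n, m}$, and that Lemma \ref{lem:for-simplifying-circ-product} applies with the parameters at hand. Once these alignments are confirmed, the identity is a direct consequence of the Jacobi identity and no further twisted-module input is needed.
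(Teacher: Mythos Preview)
Your proposal is correct and follows exactly the paper's own approach (indeed, the paper presents the computation before the statement and then writes ``Summarizing the above calculations''). There is one bookkeeping slip: the denominator exponent in your displayed residue should be $\lambda(m,j_2)+\lambda_t(m,r)+n-m-\tfrac{t}{T}+2$, not $\lambda(m,j_2)+\lambda_t(m,r)-\tfrac{t}{T}+2$; correspondingly, Lemma~\ref{lem:for-simplifying-circ-product} gives $\lambda_t(m,r)+n-m-\tfrac{t}{T}=\lambda(n,j_3^\vee)$, not $\lambda_t(m,r)-\tfrac{t}{T}=\lambda(n,j_3^\vee)$.
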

Note that when $M^1=V$, $g_1=1$ and $g_2=g_3=g$ for some finitely ordered $g\in \text{Aut}(V)$, $a\circ_{g, g, m}^n b$ coincides with $a\circ_{g, m}^n b$ defined in \cite{DJ08b} for $a, b\in V$, and we have the following result similar to \cite[Lemma 2.1.2]{Z96} and \cite[Lemma 3.3]{DJ08b}.
\begin{proposition}\label{prop:k-s-O}
    For homogeneous $a\in V^{(j_1, j_2)}$, and $k, s\in \N$ such that $k\geq s$,
    \begin{equation}
        \Res_{z}\frac{(1+z)^{\wt a+\la(m, j_2)+s}}{z^{\la(m, j_2)+\la(n, j_3^{\vee})+2+k}}Y_{M^1}(a, z)v\in \mathcal{O}_{g_3, g_2, n, m}(M^1).
    \end{equation}
\end{proposition}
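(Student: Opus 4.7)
The plan is to mimic the derivation immediately preceding Proposition \ref{prop:some-vectors-in ker O}, using a shifted residue operator that inserts extra nonnegative integer powers $+s$ on $z_1$ and $+(k-s)$ on $z_2$. The hypothesis $k\ge s\ge 0$ is exactly what makes both shifts nonnegative integers, so that the vanishing of the first two terms on the left-hand side of the Jacobi identity \eqref{eq:Jacobi} is preserved; the case $s=k=0$ recovers Proposition \ref{prop:some-vectors-in ker O}.

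Concretely, for any $I\in\mathcal{I}\binom{M^3}{M^1M^2}$ with $M^2\in\text{Irr}_{g_2}(V)$ and $M^3\in\text{Irr}_{g_3}(V)$, any $v_2\in\Omega_m(M^2)$ and any integer $r\in[0,T)$, and for homogeneous $a\in V^{(j_1,j_2)}$ and $v\in M^1$ with $\deg v=\lf\deg v\rf+t/T$, apply
\[
\Res_{z_1}\Res_{z_2}\, z_1^{\wt a+\la(m,j_2)+s}\, z_2^{\lf\deg v\rf+\la_{t}(m,r)+(k-s)}
\]
to \eqref{eq:Jacobi}. The first term on the left-hand side vanishes because $v_2\in\Omega_m(M^2)$ yields $\Res_{z_1}z_1^{\wt a+\la(m,j_2)+s}Y_{M^2}(a,z_1)v_2=0$ for any $s\ge 0$, and the second term vanishes by Remark \ref{rmk:choice-of-powers} because $\Res_{z_2}z_2^{\lf\deg v\rf+\la_{t}(m,r)+(k-s)}I^\circ(v,z_2)v_2=0$ for any $k-s\ge 0$. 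Extracting $\Res_{z_0}z_0^{x+j_1/T}$ with
\[
x=m-n-\la(m,j_2)-\la_{t}(m,r)+\tfrac{t-j_1}{T}-k-2
\]
from the resulting right-hand side identity, performing the same binomial expansion as in the original derivation, and applying Lemma \ref{lem:for-simplifying-circ-product} to simplify $\la_{t}(m,r)-t/T+n-m$ to $\la(n,j_3^\vee)$, one arrives at
\[
o^{I}_{n,m}\!\left(\Res_{z}\frac{(1+z)^{\wt a+\la(m,j_2)+s}}{z^{\la(m,j_2)+\la(n,j_3^\vee)+2+k}}Y_{M^1}(a,z)v\right)v_2=0.
\]
Since $I$, $M^2$, $M^3$, and $v_2$ are arbitrary, the displayed element lies in $\mathcal{O}_{g_3,g_2,n,m}(M^1)$.

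The main obstacle is purely bookkeeping. The divisibility constraint $\overline{\widetilde m-\widetilde n+t-j_1-j_2-r}=0$ that picked out the correct $r$ in the original derivation depends only on the fractional parts of the exponents, which are untouched by integer shifts, so no new arithmetic obstruction arises. The whole proof is a direct generalization of the base case ($s=k=0$) with no additional conceptual input beyond the observation that higher residues also annihilate the two left-hand-side terms of Jacobi.
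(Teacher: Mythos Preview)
Your argument is correct. Shifting the exponents on $z_1$ and $z_2$ by the nonnegative integers $s$ and $k-s$ preserves the vanishing of both left-hand terms of \eqref{eq:Jacobi}, the fractional-part constraint on $r$ is unaffected, and the rest of the derivation goes through verbatim with $\la(m,j_2)$ replaced by $\la(m,j_2)+s$ in the binomial and with the extra $+k$ appearing in the denominator exponent after invoking Lemma~\ref{lem:for-simplifying-circ-product}. One cosmetic slip: you have the two Jacobi terms swapped. The first term of \eqref{eq:Jacobi} carries $Y_{M^3}(a,z_1)I^\circ(v,z_2)v_2$ and is killed by the $z_2$-residue (the $I^\circ$ truncation), while the second term carries $I^\circ(v,z_2)Y_{M^2}(a,z_1)v_2$ and is killed by the $z_1$-residue (the $\Omega_m$ condition). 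This does not affect the conclusion.

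Your route differs from the one the paper points to. The reference to \cite[Lemma 2.1.2]{Z96} indicates the standard inductive argument that stays inside the subspace $\mathcal{O}_{g_3,g_2,n,m}(M^1)$: one applies Proposition~\ref{prop:some-vectors-in ker O} to $L_{(-1)}a$ in place of $a$, integrates by parts using $Y_{M^1}(L_{(-1)}a,z)=\tfrac{d}{dz}Y_{M^1}(a,z)$, and obtains the case $(s,k)=(1,1)$ as a combination of $(L_{(-1)}a)\circ_{g_3,g_2,m}^n v$ and $a\circ_{g_3,g_2,m}^n v$; iterating gives all diagonal cases $(s,s)$, and then expanding $(1+z)^s=\sum_j\binom{s}{j}z^j$ reduces the general $k\ge s$ case to the $s=0$ cases. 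Your approach avoids this induction entirely by going back to the intertwining-operator characterization of $\mathcal{O}_{g_3,g_2,n,m}(M^1)$; it is more direct here precisely because the paper already has that characterization available, whereas in Zhu's original setting $O(V)$ was defined by generators and the inductive argument was the only option.
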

\begin{proof}
    The proof is similar to the proof of \cite[Lemma 2.1.2]{Z96}.
\end{proof}
\begin{proposition}\label{prop:L(-1)+L(0)}
    For $I\in \mathcal{I}\binom{M^3}{M^1M^2}$, where $M^2$ and $M^3$ are of conformal weight $h_2$ and $h_3$, respectively. Then  
    \begin{equation}\label{eq:L(-1)+L(0)-general}
        L_{(-1)}v+L_{(0)}v+(m+h_2-n-h_3)v\in \ker o^I_{n, m}
    \end{equation}
    for $v\in M^1$. In particular, if $M^1=V$, then 
    \begin{equation}\label{eq:L(-1)+L(0)-V}
        L_{(-1)}v+L_{(0)}v+(m-n)v\in \mathcal{O}_{g_3, g_2, n, m}(V).
    \end{equation}
\end{proposition}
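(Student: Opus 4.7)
The plan is to derive both statements from the $L_{(-1)}$-derivative property, passed through the shift $I^\circ(v,z)=z^{h_1+h_2-h_3}I(v,z)$ of Proposition \ref{prop:shifted-TIO}. Differentiating this shift and using $I^\circ(L_{(-1)}v,z)=z^{h_1+h_2-h_3}\frac{d}{dz}I(v,z)$ yields
\begin{equation*}
I^\circ(L_{(-1)}v,z)=\frac{d}{dz}I^\circ(v,z)-(h_1+h_2-h_3)\,z^{-1}I^\circ(v,z).
\end{equation*}
Extracting the coefficient of $z^{-j-1}$ produces the pointwise identity $(L_{(-1)}v)_{(j)}=-(j+h_1+h_2-h_3)\,v_{(j-1)}$ for every $j\in\frac{1}{T}\Z$, which is the engine of the proof.

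Reducing to the case where $v$ is $\deg$-homogeneous with $\deg v=k$ (the general case follows by decomposition), one has $\deg(L_{(-1)}v)=k+1$ and $\wt v=h_1+k$, so
\begin{equation*}
o^{I}_{n,m}(L_{(-1)}v)=(L_{(-1)}v)_{(k-n+m)}\big|_{\Omega_m(M^2)}=-(k-n+m+h_1+h_2-h_3)\,o^{I}_{n,m}(v),
\end{equation*}
and $o^{I}_{n,m}(L_{(0)}v)=(h_1+k)\,o^{I}_{n,m}(v)$. Adding the contribution $(m+h_2-n-h_3)\,o^{I}_{n,m}(v)$, the numerical coefficient telescopes to $0$, which proves \eqref{eq:L(-1)+L(0)-general}.

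For the particular case $M^1=V$, one has $h_1=0$ and $g_3=g_1g_2=g_2$. The standard fact here is that for any irreducibles $M^2,M^3\in\text{Irr}_{g_2}(V)$, the space $\mathcal{I}\binom{M^3}{VM^2}$ is parameterized by $g_2$-twisted $V$-module homomorphisms $M^2\to M^3$: the $L_{(-1)}$-derivative property forces $I(\vac,z)$ to be a constant operator $\phi\in\Hom(M^2,M^3)$, and commutativity applied with $v=\vac$ forces $\phi$ to intertwine $Y_{M^2}$ and $Y_{M^3}$; moreover $I$ is determined by $\phi$. By Schur's lemma, any nonzero $I\in\mathcal{I}\binom{M^3}{VM^2}$ forces $M^2\cong M^3$, hence $h_2=h_3$. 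Consequently $(h_3-h_2)v$ lies in $\ker o^{I}_{n,m}$ for every $I\in\mathcal{P}$, and \eqref{eq:L(-1)+L(0)-V} follows by subtracting from \eqref{eq:L(-1)+L(0)-general}.

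The only real obstacle is the careful bookkeeping of three parallel weight shifts --- the $z^{h_1+h_2-h_3}$ normalization relating $I$ and $I^\circ$, the $+1$ jump in the admissible grading produced by $L_{(-1)}$, and the identity $\wt v=h_1+\deg v$ --- but once these are aligned, the cancellation is forced.
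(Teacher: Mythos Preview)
Your proof is correct and follows essentially the same approach as the paper. The paper's argument for \eqref{eq:L(-1)+L(0)-general} is the one-line ``follows from the $L_{(-1)}$-derivative property of $I$,'' which you have unpacked carefully and correctly; for \eqref{eq:L(-1)+L(0)-V} the paper uses exactly the same Schur's lemma argument (that a nonzero $I\in\mathcal{I}\binom{M^3}{VM^2}$ forces $M^2\cong M^3$ via the constant operator $I(\vac,z)$, hence $h_2=h_3$).
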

\begin{proof}
    \eqref{eq:L(-1)+L(0)-general} follows from the $L_{(-1)}$-derivative property of $I$. Suppose $M^1=V$, then for any irreducible $g$-twisted modules $M^2$, $M^3$ and a nonzero $I\in \mathcal{I}\binom{M^3}{VM^2}$, by the $L_{(-1)}$-derivative property of $I$, we have $I(\vac, z)=\vac_{(-1)}$ and it is a homomorphism from $M^2$ to $M^3$ by commutativity. Thus either $M^2\cong M^3$ or $I(\vac, z)=0$ by Schur's Lemma. The latter case implies $I=0$ since $V$ is generated by $\vac$. Hence $M^2$ and $M^3$ always have the same conformal weight if $I\neq 0$. We thus proved \eqref{eq:L(-1)+L(0)-V}.
\end{proof}
\begin{remark}\label{rmk:recover}
    Our construction explains and recovers some known results:
    \begin{itemize}
        \item It explains, to some extent, why in \cite{DLM96}, to construct higher level Zhu algebras $A_n(V)$ for $n\in \N$, $(L_{(-1)}+L_{(0)})a$ for $a\in V$ was quotient out,  while in the construction of $A_n(V)$-bimodules $A_n(M)$ in \cite{DR13}, $(L_{(-1)}+L_{(0)})v$ for $v\in M$ was not. 
        \item When $M^1=V$ and $g_2=g_3=g$ for some finitely ordered $g\in \text{Aut}(V)$, a series of $A_{g, n}(V)$-$A_{g, m}(V)$-bimodule $A_{g, n, m}(V)$ was constructed for $m, n\in \frac{1}{T}\N$ in \cite{DJ08b}. The subspace $O_{g, n, m}(V)$ defined therein is contained in $\mathcal{O}_{g, g, n, m}(V)$. In particular, when $g=1$, it was proved in \cite{Han22} that $\mathcal{O}_{1, 1, n, m}(V)=O_{1, n, m}(V)$. We shall prove $O_{g, n, m}(V)=\mathcal{O}_{g, g, n, m}(V)$ for any finitely ordered $g\in \text{Aut}(V)$ later.
    \end{itemize}
\end{remark}
\begin{proposition}\label{prop:recover-algebras}
    Let $g\in \text{Aut}(V)$ with finite order $T$. For any $n\in \frac{1}{T}\N$, the $\mathcal{A}_{g, g, n, n}(V)$ associated to the quadruple $(V, 1, g, g)$ is an associative algebra and it coincides with $A_{g, n}(V)$.
\end{proposition}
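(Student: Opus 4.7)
The plan is to reduce everything to an identification $\mathcal{A}_{g,g,n,n}(V) = A_{g,n}(V)$ in two steps: (i) show the defining subspaces coincide, $\mathcal{O}_{g,g,n,n}(V) = O_{g,n}(V)$, so the underlying vector space of $\mathcal{A}_{g,g,n,n}(V)$ matches that of $A_{g,n}(V)$; and (ii) check that both bimodule actions $\bar{\ast}_{g,n}^n$ and $\underline{\ast}_{g,n}^n$ descend to the twisted Zhu product $\ast_{g,n}$, so the algebra structures match.

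For the inclusion $O_{g,n}(V) \subseteq \mathcal{O}_{g,g,n,n}(V)$: Proposition \ref{prop:L(-1)+L(0)} with $m = n$ immediately gives $L_{(-1)}a + L_{(0)}a \in \mathcal{O}_{g,g,n,n}(V)$. For the generators $a \circ_{g,n} b$ of \cite{DLM98c}, the quadruple $(V, 1, g, g)$ forces $j_1 = 0$, $j_2 = r$ (for $a \in V^r$) and $j_3^{\vee} = \overline{-r}$; a short unpacking of definitions then shows $\lambda(n,r) + \lambda(n, j_3^{\vee}) + 2 = 2\lfloor n \rfloor + \delta_{\widetilde{n}}(r) + \delta_{\widetilde{n}}(T-r) + 1$, matching the $z$-exponent of $a \circ_{g,n} b$. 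Hence $a \circ_{g,g,n}^n b = a \circ_{g,n} b$, and Proposition \ref{prop:some-vectors-in ker O} completes the inclusion. For (ii), specializing \eqref{eq:left-composition} with $p = m = n$ and $j_1 = 0$, the nonvanishing condition $\overline{-j_2} = 0$ forces $a \in V^0$, in which case $\lambda(n, 0) = \lfloor n \rfloor$ reduces the formula verbatim to $a \ast_{g,n} b$ of \cite{DLM98c}. Remark \ref{rmk:connection-with-DJ} then states $a \underline{\ast}_{g,n}^n b \equiv a \bar{\ast}_{g,n}^n b \pmod{\mathcal{O}_{g,g,n,n}(V)}$, so both actions coincide on the quotient.

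The main content is the reverse inclusion $\mathcal{O}_{g,g,n,n}(V) \subseteq O_{g,n}(V)$. For any irreducible $g$-twisted module $M$, the module vertex operator $Y_M$ is itself an intertwining operator in $\mathcal{I}\binom{M}{VM}$ with both $M^2 = M^3 = M$ irreducible $g$-twisted, so $Y_M \in \mathcal{P}$. Any $a \in \mathcal{O}_{g,g,n,n}(V)$ therefore satisfies $o^M(a)|_{\Omega_n(M)} = 0$ for every irreducible $g$-twisted $M$, which is to say $[a] \in A_{g,n}(V)$ annihilates each $\Omega_n(M)$ as an $A_{g,n}(V)$-module. Since $V$ is strongly rational, Proposition \ref{prop:rationality} yields $g$-rationality and Theorem \ref{thm:twistedZhu}(6) gives that $A_{g,n}(V)$ is finite-dimensional semisimple. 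By Theorem \ref{thm:twistedZhu}(4)-(5), every simple $A_{g,n}(V)$-module appears as a direct summand of $\Omega_n(M) = \bigoplus_{i=0}^n M(i)$ for some irreducible $g$-twisted $M$, and semisimplicity then forces $[a] = 0$, i.e., $a \in O_{g,n}(V)$.

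The main obstacle is precisely this reverse inclusion: a priori, $\mathcal{O}_{g,g,n,n}(V)$ is cut out by an enormous collection of intertwining operators, and one must reduce it to the much smaller subfamily $\{Y_M : M \text{ irreducible } g\text{-twisted}\} \subseteq \mathcal{P}$. This reduction relies essentially on the semisimplicity of $A_{g,n}(V)$ together with the realization of every simple $A_{g,n}(V)$-module inside some $\Omega_n(M)$, both ultimately coming from strong rationality.
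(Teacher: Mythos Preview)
Your proof is correct and follows essentially the same arc as the paper's, but with a small variation in the reverse inclusion $\mathcal{O}_{g,g,n,n}(V)\subseteq O_{g,n}(V)$. The paper picks a single module, namely $M=L\!\left(A_{g,n}(V)\right)$, whose $n$-th piece is $A_{g,n}(V)$ itself; then for $a\in\mathcal{O}_{g,g,n,n}(V)$ one gets $[a]=[a]\ast_{g,n}[\vac]=o^{M}(a)[\vac]=0$ directly from faithfulness of the left regular representation (invoking Proposition~\ref{prop:reducible-M2-M3} since this $M$ need not be irreducible). You instead range over all irreducible $g$-twisted $M$, use that each simple $A_{g,n}(V)$-module is realized as some $M(n)$, and conclude via semisimplicity. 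Both arguments are valid and rest on the same ingredients from Theorem~\ref{thm:twistedZhu}; the paper's is marginally shorter, while yours makes the role of semisimplicity more explicit. Incidentally, for the forward inclusion you are more careful than the paper: you separately handle the generators $(L_{(-1)}+L_{(0)})a$ via Proposition~\ref{prop:L(-1)+L(0)}, whereas the paper cites only Proposition~\ref{prop:some-vectors-in ker O}.
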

\begin{proof}
    $O_{g, n}(V)\subseteq \mathcal{O}_{g, g, n, n}(V)$ by Proposition \ref{prop:some-vectors-in ker O}. Conversely, let $M=L\left(A_{g, n}(V)\right)$, the admissible $g$-twisted module generated by $A_{g, n}(V)$ as in Theorem \ref{thm:twistedZhu}, and take $I=Y_{M}$. Then $I\in \mathcal{I}\binom{M}{VM}$. Since $V$ is $g$-rational, $A_{g, n}(V)$ is finite-dimensional, thus $M$ is a $g$-twisted module. Suppose $a\in \mathcal{O}_{g, g, n, n}(V)$, by Proposition \ref{prop:reducible-M2-M3}, $o^M(a)\mid_{M(n)}=o^I_{n, n}(a)=0$. This implies $[a]=[a]\ast_{g, n} [\vac]=o^M(a)[\vac]=0$, hence $a\in O_{g, n}(V)$. Therefore $\mathcal{A}_{g, g, n, n}(V)=A_{g, n}(V)$ as vector spaces. Moreover, by Remark \ref{rmk:connection-with-DJ}, $\mathcal{A}_{g, g, n, n}(V)=A_{g, n}(V)$ as algebras.
\end{proof}

\section{Properties of \texorpdfstring{$\mathcal{A}_{g_3, g_2, n, m}(M^1)$}{Ag3, g2, n, m(M1)}}\label{sec:properties}
In this section, we present some properties of $\mathcal{A}_{g_3, g_2, n, m}(M^1)$. Without loss of generality, we still assume $M^1$ to be irreducible with conformal weight $h_1$.

\begin{proposition}\label{prop:(n, m)-to-lower}
    The identity map induces a bimodule epimorphism from $\mathcal{A}_{g_3, g_2, n, m}(M^1)$ to $\mathcal{A}_{g_3, g_2, n-i, m-i}(M^1)$ for $i\in \frac{1}{T}\N$ and $i\leq \text{min}\{m, n\}$.
\end{proposition}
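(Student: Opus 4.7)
The plan is to prove the containment $\mathcal{O}_{g_3, g_2, n, m}(M^1) \subseteq \mathcal{O}_{g_3, g_2, n-i, m-i}(M^1)$, which makes the identity on $M^1$ descend to a linear surjection, and then to verify that this surjection intertwines the bimodule actions once the target is regarded as an $A_{g_3, n}(V)$-$A_{g_2, m}(V)$-bimodule via the algebra epimorphisms $A_{g_3, n}(V) \twoheadrightarrow A_{g_3, n-i}(V)$ and $A_{g_2, m}(V) \twoheadrightarrow A_{g_2, m-i}(V)$ furnished by iterating Theorem \ref{thm:twistedZhu}(2).

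For the containment, the key observation is that for homogeneous $v \in M^1$ both $o^I_{n, m}(v)$ and $o^I_{n-i, m-i}(v)$ are defined as the same coefficient $v_{(\deg v - 1 - (n-m))}$ of $I^\circ(v, z)$, restricted respectively to $\Omega_m(M^2)$ and $\Omega_{m-i}(M^2)$. Since $M^2$ is irreducible, Theorem \ref{thm:twistedZhu}(4) gives $\Omega_j(M^2) = \bigoplus_{k=0}^{j} M^2(k)$, whence $\Omega_{m-i}(M^2) \subseteq \Omega_m(M^2)$. So vanishing on the larger subspace forces vanishing on the smaller, i.e.\ $\ker o^I_{n, m} \subseteq \ker o^I_{n-i, m-i}$ for every $I \in \mathcal{P}$; intersecting gives the desired containment and a well-defined linear surjection $\phi\colon \mathcal{A}_{g_3, g_2, n, m}(M^1) \twoheadrightarrow \mathcal{A}_{g_3, g_2, n-i, m-i}(M^1)$.

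Because $\phi$ is induced by the identity on $M^1$, bimodule compatibility reduces to the congruences
\[
a\,\bar{\ast}_{g_3, m}^n\, v \;\equiv\; a\,\bar{\ast}_{g_3, m-i}^{n-i}\, v, \qquad v\,\underline{\ast}_{g_2, m}^n\, b \;\equiv\; v\,\underline{\ast}_{g_2, m-i}^{n-i}\, b \pmod{\mathcal{O}_{g_3, g_2, n-i, m-i}(M^1)}
\]
for $a, b \in V$ and $v \in M^1$. For the left congruence, apply Proposition \ref{prop:reason-for-left-action} with $p = n$ at level $(n, m)$ and with $p = n-i$ at level $(n-i, m-i)$. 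Using the identity $o^I_{n, m}(v)|_{\Omega_{m-i}(M^2)} = o^I_{n-i, m-i}(v)$ together with the fact that $o^{M^3}(a) = a_{(\wt a-1)}$ is weight-preserving and restricts from $\Omega_n(M^3)$ to $\Omega_{n-i}(M^3)$, both $o^I_{n-i, m-i}(a\,\bar{\ast}_{g_3, m}^n\, v)$ and $o^I_{n-i, m-i}(a\,\bar{\ast}_{g_3, m-i}^{n-i}\, v)$ evaluate to $o^{M^3}(a) \circ o^I_{n-i, m-i}(v)$ on $\Omega_{m-i}(M^2)$ for every $I$, placing the difference in $\mathcal{O}_{g_3, g_2, n-i, m-i}(M^1)$. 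The symmetric argument using Proposition \ref{prop:reason-for-right-action} handles the right congruence.

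I do not anticipate a genuine obstacle. The only mild subtlety is that the residue formulas for $a\,\bar{\ast}_{g_3, m}^n\, v$ and $a\,\bar{\ast}_{g_3, m-i}^{n-i}\, v$ genuinely differ at the level of $M^1$ (different summation ranges and different $\lambda$-exponents), so the congruence has to be extracted indirectly through the ``reason for left/right action'' propositions rather than by a direct comparison of residues. For a reducible $M^1$ the claim reduces to the irreducible case via the decomposition noted in the remark after Proposition \ref{prop:reducible-M2-M3}.
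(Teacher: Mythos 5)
Your proof is correct and rests on exactly the same core observation as the paper's one-line proof: $\ker o^I_{n,m}\subseteq\ker o^I_{n-i,m-i}$, which you correctly derive from the fact that $o^I_{n,m}(v)$ and $o^I_{n-i,m-i}(v)$ are the same weight-$(n-m)$ component of $I^\circ(v,z)$ restricted to nested subspaces $\Omega_{m-i}(M^2)\subseteq\Omega_m(M^2)$. You go further than the paper in spelling out the bimodule-compatibility congruences via Propositions \ref{prop:reason-for-left-action} and \ref{prop:reason-for-right-action}, a check that the paper's terse proof leaves implicit but that is genuinely needed since the residue formulas $\bar{\ast}_{g_3,m}^n$ and $\bar{\ast}_{g_3,m-i}^{n-i}$ differ on the nose.
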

\begin{proof}
    This follows from the fact that $\ker o^I_{n, m}\subseteq \ker o^I_{n-i, m-i}$ for any $M^2\in \text{Irr}_{g_2}(V)$, $M^3\in \text{Irr}_{g_3}(V)$ and $I\in \mathcal{I}\binom{M^3}{M^1M^2}$.
\end{proof}
For $m, n\in \frac{1}{T}\N$ and the quadruple $(M^1, g_1, g_3^{-1}, g_2^{-1})$, we can obtain an $A_{g_2^{-1}, m}(V)$-$A_{g_3^{-1}, n}(V)$-bimodule $\A_{g_2^{-1}, g_3^{-1}, m, n}(M^1)$ by applying the same construction as in previous section. Since $g_1$ induces an isomorphism
\begin{equation}
\begin{aligned}
    g_1\colon \quad A_{g_2^{-1}, m}(V)&\longrightarrow A_{g_2^{-1}, m}(V)\\
    a\quad &\longmapsto \quad g_1a,
\end{aligned}
\end{equation}
and $g_1^{-1}$ induces an isomorphism
\begin{equation}
\begin{aligned}
    g_1^{-1}\colon \quad A_{g_3^{-1}, n}(V)&\longrightarrow A_{g_3^{-1}, n}(V)\\
    a\quad &\longmapsto \quad g_1^{-1}a.
\end{aligned}
\end{equation}
By precomposing the left action of $\A_{g_2^{-1}, g_3^{-1}, m, n}(M^1)$ with $g_1^{-1}$, we get a new left action of $A_{g_2^{-1}, m}(V)$ on $\A_{g_2^{-1}, g_3^{-1}, m, n}(M^1)$. With this new left action of $A_{g_2^{-1}, m}(V)$ and the original right action of $A_{g_3^{-1}, n}(V)$, $\A_{g_2^{-1}, g_3^{-1}, m, n}(M^1)$ is still an 
$A_{g_2^{-1}, m}(V)$-$A_{g_3^{-1}, n}(V)$-bimodule and we denote it by $\A^{+}_{g_2^{-1}, g_3^{-1}, m, n}(M^1)$. Similarly, replacing the right action of $A_{g_3^{-1}, n}(V)$ on $\A_{g_2^{-1}, g_3^{-1}, m, n}(M^1)$ by the one obtained by precomposing with $g_1$, we get another $A_{g_2^{-1}, m}(V)$-$A_{g_3^{-1}, n}(V)$-bimodule, and denote this one by $\A^{-}_{g_2^{-1}, g_3^{-1}, m, n}(M^1)$.

Recall that the linear map $\phi$ from $V$ to $V$ defined by $\phi(a)=e^{L_{(1)}}(-1)^{L_{(0)}}a$ for $a\in V$ induces an anti-isomorphism from $A_{g, n}(V)$ to $A_{g^{-1}, n}(V)$. We now define an $A_{g_2^{-1}, m}(V)-A_{g_3^{-1}, n}(V)$-bimodule structure on $\mathcal{A}_{g_3, g_2, n, m}(M^1)$ through $\phi$. For $a, b\in V$ and $v\in M^1$, define the left $A_{g_2^{-1}, m}(V)$ action by 
\begin{equation}
    a\bullet_{g_2^{-1}, n}^m v=v\underline{\ast}_{g_2, m}^n\phi(a)
\end{equation}
and the right $A_{g_3^{-1}, n}(V)$ action by 
\begin{equation}
    v\bullet_{g_3^{-1}, n}^m b=\phi(b)\bar{\ast}_{g_3, m}^n v.
\end{equation}
Note that $e^{\pi \iu L_{(0)}}=e^{-\pi \iu L_{(0)}}$ on $V$ (both denoted by $(-1)^{L_{(0)}}$ when applied to elements in $V$) but not necessarily on $M^1$, since $M^1$ can have non-integral gradings. So we define 
    \begin{equation}
        \begin{aligned}
            \phi_{\pm}\colon& M^1\longrightarrow M^1\\
            &v\longmapsto e^{L_{(1)}}e^{\pm \pi \iu L_{(0)}}v.
        \end{aligned}
    \end{equation}
    Since 
    \begin{equation}
        e^{\pi \iu L_{(0)}}e^{L_{(1)}}e^{-\pi \iu L_{(0)}}=e^{-L_{(1)}} \quad \text{ on } M^1,
    \end{equation}
    we have 
    \begin{equation}\label{eq:compose-phi}
        \phi_{+}\phi_{-}=\phi_{-}\phi_{+}=1.
    \end{equation}
    Note that if $M^1=V$, $\phi_{+}=\phi_{-}=\phi$.
\begin{lemma}\label{lem:well-defineness-of-phi}
    For $m, n\in \frac{1}{T}\N$, $\phi_{\pm}\big(\mathcal{O}_{g_2^{-1}, g_3^{-1}, m, n}(M^1)\big)=\mathcal{O}_{g_3, g_2, n, m}(M^1)$. Hence $\phi_{+}$ induces a linear isomorphism from $\mathcal{A}_{g_2^{-1}, g_3^{-1}, m, n}(M^1)$ to $\mathcal{A}_{g_3, g_2, n, m}(M^1)$ and $\phi_{-}$ induces a linear isomorphism from $\mathcal{A}_{g_3, g_2, n, m}(M^1)$ to $\mathcal{A}_{g_2^{-1}, g_3^{-1}, m, n}(M^1)$.
\end{lemma}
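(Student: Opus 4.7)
The plan is to use the adjoint intertwining operators $A_\pm$ from Proposition \ref{prop:adjoint-IO} to transport kernels between the two $\mathcal{O}$-spaces. Given any $I\in \mathcal{I}\binom{M^3}{M^1M^2}$ associated with the quadruple $(M^1, g_1, g_2, g_3)$, its adjoint $A_+(I)$ lies in $\mathcal{I}\binom{(M^2)'\circ g_1}{M^1(M^3)'}$ and $A_-(I)$ in $\mathcal{I}\binom{(M^2)'}{M^1(M^3)'\circ g_1^{-1}}$. Since $g_1g_2=g_3$ forces $g_1g_3^{-1}=g_2^{-1}$, both of these are intertwining operators for the dual quadruple $(M^1, g_1, g_3^{-1}, g_2^{-1})$, with the appropriate contragredients giving $g_2^{-1}$-twisted and $g_3^{-1}$-twisted modules by Theorem \ref{thm:contragredient}. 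Because $A_\pm$ are bijections and contragredients (possibly twisted by $g_1^{\pm 1}$) permute the irreducibles in $\text{Irr}_{g_i}(V)$, the family $\{A_\pm(I)\}$, as $M^2,M^3,I$ vary, exhausts all intertwining operators needed to detect $\mathcal{O}_{g_2^{-1},g_3^{-1},m,n}(M^1)$.

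The core step is to unwind the defining identity \eqref{def:adjointIO} at the level of $L_{(0)}$-weight components. For homogeneous $v\in M^1$, the operator $z^{-2L_{(0)}}$ acting on $v$ produces the scalar $z^{-2(h_1+\deg v)}$, the operator $e^{\pm\pi\iu L_{(0)}}$ produces the phase $e^{\pm\pi\iu(h_1+\deg v)}$, and $e^{zL_{(1)}}v=\sum_{k\geq 0}\tfrac{z^k}{k!}L_{(1)}^kv$ is a finite sum of terms of strictly decreasing degree. After the substitution $z\mapsto z^{-1}$ in $I(\cdot,z^{-1})$, extracting the weight-$(n-m)$ component forces only the top-degree term of $e^{zL_{(1)}}v$ to contribute when paired against $v_3'\in\Omega_n((M^3)')$ and $v_2\in\Omega_m(M^2)$, because of the $L_{(0)}$-weight grading matching between $\Omega$-spaces and their duals. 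The outcome is an identity of the form
\[\bigl\langle o^{A_\pm(I)}_{m,n}(\phi_\mp(v))\,v_3',\,v_2\bigr\rangle=c\cdot \bigl\langle v_3',\,o^I_{n,m}(v)\,v_2\bigr\rangle,\]
with a nonzero phase $c=e^{\mp\pi\iu(h_1+\deg v)}$.

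Once this adjoint identity is established, nondegeneracy of the natural pairings between $\Omega_m(M^2)$ and $\Omega_m((M^2)')$, and between $\Omega_n(M^3)$ and $\Omega_n((M^3)')$, shows that $o^{A_\pm(I)}_{m,n}(\phi_\mp(v))=0$ if and only if $o^I_{n,m}(v)=0$. Intersecting over all $I$, and using that $A_\pm$ are bijections onto the intertwining operators for the dual quadruple, gives $\phi_\mp(\mathcal{O}_{g_3,g_2,n,m}(M^1))\subseteq\mathcal{O}_{g_2^{-1},g_3^{-1},m,n}(M^1)$. Applying the same argument to the dual quadruple (whose dual is the original) yields the reverse inclusion, and combining with $\phi_+\phi_-=\phi_-\phi_+=1$ from \eqref{eq:compose-phi} yields the equalities claimed and the fact that the induced maps on the $\mathcal{A}$-quotients are mutually inverse isomorphisms.

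The main obstacle will be the weight-component bookkeeping needed for the adjoint identity: one must carefully expand $e^{zL_{(1)}}v$, handle the possibly non-integral $L_{(0)}$-spectrum on $M^1$ (which is precisely why $e^{+\pi\iu L_{(0)}}$ and $e^{-\pi\iu L_{(0)}}$ differ on $M^1$ and force the introduction of two separate operators $\phi_+,\phi_-$, even though $\phi$ on $V$ is single-valued), and correctly match powers of $z$ before and after the substitution $z\mapsto z^{-1}$ so that only the desired weight component survives. This is a twisted analogue of the classical adjoint computation from \cite{FHL93}.
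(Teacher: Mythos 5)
Your overall route — transporting kernels with the adjoint operators $A_\pm$ and invoking the bijectivity from Proposition~\ref{prop:adjoint-IO} — is precisely the route the paper takes. The paper applies $\Res_{z}z^{h_2-h_3-h_1}z^{\deg v-1-m+n}$ to the defining identity \eqref{def:adjointIO}, reads off
\[
\langle o^{A_{\pm}(I)}_{m, n}(v)v_3', v_2\rangle=\langle v_3', o^{I}_{n, m}(\phi_{\pm}v)v_2\rangle,
\]
deduces $v\in \ker o^{A_{\pm}(I)}_{m, n}$ iff $\phi_{\pm}v\in \ker o^{I}_{n, m}$, and finishes by intersecting over all $I$ via Proposition~\ref{prop:adjoint-IO}.

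However, your justification of the core identity is internally inconsistent. You assert that ``extracting the weight-$(n-m)$ component forces only the top-degree term of $e^{zL_{(1)}}v$ to contribute.'' If that were true, the resulting identity would feature $e^{\mp\pi\iu L_{(0)}}v$, not $\phi_{\mp}(v)=e^{L_{(1)}}e^{\mp\pi\iu L_{(0)}}v$, and your own stated output (which contains $\phi_\mp$) would then be unattainable from your stated mechanism. In fact every term of $e^{zL_{(1)}}v=\sum_k \frac{z^k}{k!}L_{(1)}^kv$ contributes: for $v_2\in M^2(m')$ and $v_3'\in (M^3(n'))^*$, weight-matching forces $(L_{(1)}^kv)(p)$ with $p=m'-n'+\deg v-k-1$, and the accompanying $z$-exponent $p+1+k-2(h_1+\deg v)=m'-n'-\deg v-2h_1$ is \emph{independent of $k$}; thus the single residue sweeps up the whole $e^{L_{(1)}}$ series, which is exactly why $\phi_\pm$ rather than just the phase appears. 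Relatedly, your scalar $c=e^{\mp\pi\iu(h_1+\deg v)}$ should be $1$: the phase is already absorbed into the definition of $\phi_\mp$, as one sees by substituting $w=\phi_\mp v$ into the paper's identity above and using \eqref{eq:compose-phi}. Neither correction changes the kernel-matching conclusion, but as written the ``core step'' reasoning would not produce the formula you rely on.
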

\begin{proof}
    Let $I\in \mathcal{I}\binom{M^3}{M^1M^2}$ for some $M^2\in \text{Irr}_{g_2}(V)$ with conformal weight $h_2$ and $M^3\in \text{Irr}_{g_3}(V)$ with conformal weight $h_3$. By \ref{def:adjointIO}, for $v\in M^1, v_3'\in \Omega_n\big((M^3)'\big)$ and $v_2\in \Omega_{m}(M^2)$, 
    \begin{equation}
        \langle A_{\pm}(I)(v, z)v_3', v_2 \rangle=\langle v_3', I\big(e^{zL_{(1)}}e^{\pm \pi \iu L_{(0)}}(z^{-L_{(0)}})^2v, z^{-1}\big)v_2 \rangle.
    \end{equation}
    Applying $\Res_{z}z^{h_2-h_3-h_1}z^{\deg v-1-m+n}$ to both sides gives
    \begin{equation}
        \langle o^{A_{\pm}(I)}_{m, n}(v)v_3', v_2\rangle=\langle v_3', o^{I}_{n, m}(\phi_{\pm}v)v_2\rangle,
    \end{equation}
    which implies $v\in \ker o^{A_{\pm}(I)}_{m, n}$ if and only if $\phi_{\pm}v\in \ker o^{I}_{n, m}$, hence 
    \begin{equation}
        \phi_{\pm}\big(\mathcal{O}_{g_2^{-1}, g_3^{-1}, m, n}(M^1)\big)=\mathcal{O}_{g_3, g_2, n, m}(M^1)
    \end{equation}
    by Proposition \ref{prop:adjoint-IO}. 
\end{proof}
\begin{proposition}
    For $m, n\in \frac{1}{T}\N$, $\phi_{+}$ induces an $A_{g_2^{-1}, m}(V)$-$A_{g_3^{-1}, n}(V)$-bimodule isomorphism from $\mathcal{A}_{g_2^{-1}, g_3^{-1}, m, n}^{-}(M^1)$ to $\mathcal{A}_{g_3, g_2, n, m}(M^1)$ and $\phi_{-}$ induces one from $\mathcal{A}_{g_2^{-1}, g_3^{-1}, m, n}^{+}(M^1)$ to $\mathcal{A}_{g_3, g_2, n, m}(M^1)$.
\end{proposition}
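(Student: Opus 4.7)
The key is to lift the adjoint-intertwining-operator identity
\[
\langle o^{A_{\pm}(I)}_{m,n}(v)\,v_3',\,v_2\rangle=\langle v_3',\,o^{I}_{n,m}(\phi_{\pm}v)\,v_2\rangle
\]
established in the proof of Lemma~\ref{lem:well-defineness-of-phi} from an equality of linear maps to an equivariance statement for the bimodule actions. I would treat the two cases $\phi_{+}\colon\A^{-}\to\A_{g_3,g_2,n,m}(M^1)$ and $\phi_{-}\colon\A^{+}\to\A_{g_3,g_2,n,m}(M^1)$ in parallel.

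First, Lemma~\ref{lem:well-defineness-of-phi} already gives bijectivity, so only equivariance remains. For any irreducible $M^2\in\mathrm{Irr}_{g_2}(V)$, $M^3\in\mathrm{Irr}_{g_3}(V)$ and $I\in\mathcal{I}\binom{M^3}{M^1M^2}$, I would apply Proposition~\ref{prop:reason-for-left-action} to the adjoint operator $A_{\pm}(I)$ (viewed as an intertwining operator for the quadruple $(M^1,g_1,g_3^{-1},g_2^{-1})$) to express
$o^{A_{\pm}(I)}_{m,n}(a\bar{\ast}^{m}_{g_2^{-1},n}v)$ as a composition with the zero mode of $a$ on the appropriate twist of $(M^2)'$. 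Since the target of $A_{+}(I)$ is $(M^2)'\circ g_1$ while that of $A_{-}(I)$ is $(M^2)'$, this is where the factor $g_1^{\pm1}$ enters the left action; an analogous application of Proposition~\ref{prop:reason-for-right-action} produces a $g_1^{\pm1}$ in the right action via the $g_1^{-1}$-twist of $(M^3)'$ in $A_{-}(I)$ versus the untwisted $(M^3)'$ in $A_{+}(I)$.

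Second, using the standard duality $(o^{M'}(a))^{\ast}=o^{M}(\phi(a))$ (which follows directly from the definition of the contragredient vertex operator and a residue calculation), I would transpose these actions across the pairing in the adjoint identity. Combining with Propositions~\ref{prop:reason-for-left-action} and~\ref{prop:reason-for-right-action} applied back to $I$ itself (with $p=m$ on the right and $p=n$ on the left), this converts the left/right $A_{g_2^{-1},m}(V)$-, $A_{g_3^{-1},n}(V)$-actions on $o^{A_{\pm}(I)}_{m,n}$ into the $\bullet$-actions on $o^{I}_{n,m}(\phi_{\pm}v)$. The residual $g_1^{\pm1}$ discrepancy picked up in the first step is exactly canceled by the precomposition-by-$g_1^{\mp1}$ prescription in the definitions of $\A^{\mp}$, which is why the "$+$/$-$" of $\phi$ pairs with the opposite "$-$/$+$" of $\A$ in the statement.

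Finally, since the resulting equations hold for every $I\in\mathcal{P}$, both differences lie in $\bigcap_{I}\ker o^{I}_{n,m}=\mathcal{O}_{g_3,g_2,n,m}(M^1)$ by definition, and hence the bimodule homomorphism identities hold in $\mathcal{A}_{g_3,g_2,n,m}(M^1)$. Bijectivity is given by Lemma~\ref{lem:well-defineness-of-phi}, with inverse given by $\phi_{\mp}$ thanks to the relation $\phi_{+}\phi_{-}=\phi_{-}\phi_{+}=1$ in~\eqref{eq:compose-phi}.

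The main obstacle will be the bookkeeping of the $g_1$-twist: one must verify carefully that the $g_1^{\pm1}$ introduced by the "$\circ g_1^{\pm1}$" in the target/source of $A_{\pm}(I)$, the $g_1^{-1}$ hidden inside the formula~\eqref{eq:right-action} for $\underline{\ast}$, and the precomposition prescription in $\A^{\pm}$ all conspire to cancel exactly, so that the final identities on both sides of the intended congruence agree without any leftover eigenvalue factor. This is essentially a sign/eigenvalue check on the eigenspaces $V^{(j_1,j_2)}$, and it is where the specific choices $\phi_{+}\leftrightarrow\A^{-}$ and $\phi_{-}\leftrightarrow\A^{+}$ become forced.
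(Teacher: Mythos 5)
The paper's own proof is a direct computation: it verifies the two identities
$\phi_{+}(a\,\bar{\ast}_{g_2^{-1},n,p}^{m}v)=\phi_{+}(v)\,\underline{\ast}_{g_2,m,p}^{n}\phi(a)$ and $\phi_{+}(v\,\underline{\ast}_{g_3^{-1},n,p}^{m}g_1a)=\phi(a)\,\bar{\ast}_{g_3,m,p}^{n}\phi_{+}(v)$ on the nose in $M^1$, by pushing $e^{L_{(1)}}e^{\pi\iu L_{(0)}}$ through $Y_{M^1}(a,z)$ with the conjugation formulas and carefully tracking the branch in the formal-variable substitution (which is where the $g_1^{-1}$ inside $\underline{\ast}$ emerges). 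Your route is genuinely different: you want to deduce the equivariance indirectly, by dualizing the actions across the adjoint-intertwiner identity $\langle o^{A_{\pm}(I)}_{m,n}(v)v_3',v_2\rangle=\langle v_3',o^{I}_{n,m}(\phi_{\pm}v)v_2\rangle$ and invoking Propositions \ref{prop:reason-for-left-action}, \ref{prop:reason-for-right-action} on both $I$ and $A_{\pm}(I)$, landing only in a congruence modulo $\mathcal{O}_{g_3,g_2,n,m}(M^1)$. Since the proposition is about the quotient $\mathcal{A}$, a congruence would suffice, so the plan is reasonable in outline.

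However, the cancellation of the $g_1$-twist factors that you flag as ``the main obstacle'' does not in fact go through, and this is a real gap. Trace the left action with $\phi_{+}$ and $A_{+}$: applying Proposition \ref{prop:reason-for-left-action} to $A_{+}(I)\in\mathcal{I}\binom{(M^2)'\circ g_1}{M^1(M^3)'}$ with $p=m$ gives $o^{A_{+}(I)}_{m,n}(a\bar{\ast}^{m}_{g_2^{-1},n}v)=o^{(M^2)'\circ g_1}(a)\circ o^{A_{+}(I)}_{m,n}(v)=o^{(M^2)'}(g_1^{-1}a)\circ o^{A_{+}(I)}_{m,n}(v)$, and transposing with $(o^{(M^2)'}(b))^{*}=o^{M^2}(\phi(b))$ produces $o^{M^2}(\phi(g_1^{-1}a))$ on the $M^2$ side. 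Combining with the lemma identity and Proposition \ref{prop:reason-for-right-action} for $I$, you arrive at
\begin{equation*}
\phi_{+}(a\,\bar{\ast}_{g_2^{-1},n}^{m}v)\equiv \phi_{+}(v)\,\underline{\ast}_{g_2,m}^{n}\phi(g_1^{-1}a)\pmod{\mathcal{O}_{g_3,g_2,n,m}(M^1)},
\end{equation*}
whereas the left $\bullet$-action you need to hit is $a\bullet^{m}_{g_2^{-1},n}\phi_{+}(v)=\phi_{+}(v)\,\underline{\ast}_{g_2,m}^{n}\phi(a)$ --- off by one power of $g_1$. Crucially, $\mathcal{A}^{-}_{g_2^{-1},g_3^{-1},m,n}(M^1)$ carries the \emph{original} left $A_{g_2^{-1},m}(V)$-action (the $g_1$-precomposition in $\mathcal{A}^{-}$ only alters the right action), so there is no precomposition on the left to absorb the residual $g_1^{-1}$. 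The same one-power mismatch appears on the right: your computation gives $\phi_{+}(v\,\underline{\ast}_{g_3^{-1},n}^{m}b)\equiv\phi(b)\,\bar{\ast}_{g_3,m}^{n}\phi_{+}(v)$, whereas matching the right $\bullet$-action through the $\mathcal{A}^{-}$ precomposition requires $\phi_{+}(v\,\underline{\ast}_{g_3^{-1},n}^{m}g_1b)\equiv\phi(b)\,\bar{\ast}_{g_3,m}^{n}\phi_{+}(v)$, again off by one $g_1$. So your claimed pairing $\phi_{+}\leftrightarrow\mathcal{A}^{-}$, $\phi_{-}\leftrightarrow\mathcal{A}^{+}$ is not what the duality bookkeeping produces, and the argument as sketched does not yield the stated homomorphism. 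To repair it you would need to find a compensating $g_1$ somewhere (e.g.\ revisit the exact form of the transposed zero-mode relation for the twisted contragredient, or the parametrization of $\mathcal{P}$ by $A_{\pm}$), or fall back on the paper's explicit conjugation computation where the $g_1^{-1}$ in $\underline{\ast}$ is produced once, not twice.
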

\begin{proof}
    By \eqref{eq:compose-phi} and Lemma \ref{lem:well-defineness-of-phi}, we only need to show that $\phi_{\pm}$ induces an $A_{g_2^{-1}, m}(V)-A_{g_3^{-1}, n}(V)$-bimodule homomorphism. We prove $\phi_{+}$ is a homomorphism, a similar proof can be applied to  $\phi_{-}$. Similar to the proof of conjugation formulas in \cite[Lemma 5.2.3]{FHL93}, we have the following conjugation formulas for a twisted module $M$:
    \begin{align}
        e^{\pi \iu L_{(0)}}Y_{M}(a, z)&e^{-\pi \iu L_{(0)}}=Y_{M}\big((-1)^{L_{(0)}}a, y\big)\mid_{y^\alpha=e^{\alpha\pi \iu}z^\alpha},\\
        e^{L_{(1)}}Y_M(a, z)e^{-L_{(1)}}&=Y_{M}\big(e^{(1-z)L_{(1)}}(1-z)^{-2L_{(0)}}a, \frac{z}{1-z}\big).
    \end{align}
    Let $j_3^{\vee}=\overline{-j_1-j_2}$. 
    
    We first show $\phi_{+}(a\bar{\ast}_{g_2^{-1}, n, p}^{m}v)=\phi_{+}(v)\underline{\ast}_{g_2, m, p}^{n}\phi(a)$ for homogeneous $a\in V^{(j_1, j_2)}$ and $v\in M^1$. Since $g_2^{-1}a=e^{\frac{2\pi \iu (-j_2)}{T}}$, $g_3^{-1}a=e^{\frac{2\pi \iu j_3^{\vee}}{T}}a$, and $\phi(a)\in V^{(j_1, j_2)}$, by \eqref{eq:left-composition}, when $\overline{\Tilde{p}-\Tilde{m}+j_2}\neq 0$, both $\phi_{+}(a\bar{\ast}_{g_2^{-1}, n, p}^{m}v)$ and $\phi_{+}(v)\underline{\ast}_{g_2, m, p}^{n}\phi(a)$ are 0; when $\overline{\Tilde{p}-\Tilde{m}+j_2}=0$, we have 
    \begin{align*}
        \MoveEqLeft
        \phi_{+}(a\bar{\ast}_{g_2^{-1}, n, p}^{m}v)\\
        &=\Res_z \sum_{i=0}^{\lf p \rf}(-1)^i\binom{\la(n, j_3^{\vee})+m-p+i}{i}\frac{(1+z)^{\wt a+\la(n, j_3^{\vee})}}{z^{\la(n, j_3^{\vee})+m-p+i+1}}e^{L_{(1)}}e^{\pi \iu L_{(0)}}Y_{M^1}(a, z)v\\
        &=\Res_z \sum_{i=0}^{\lf p \rf}(-1)^i\binom{\la(n, j_3^{\vee})+m-p+i}{i}\frac{(1+z)^{\wt a+\la(n, j_3^{\vee})}}{z^{\la(n, j_3^{\vee})+m-p+i+1}}\\
        & \hspace{1.5cm}\times e^{L_{(1)}}Y_{M^1}\left((-1)^{L_{(0)}}a, y\right)e^{\pi \iu L_{(0)}}v\mid_{y^\alpha=e^{\alpha \pi \iu}z^\alpha}\\
        &=\Res_z \sum_{i=0}^{\lf p \rf}(-1)^i\binom{\la(n, j_3^{\vee})+m-p+i}{i}\frac{(1+z)^{\wt a+\la(n, j_3^{\vee})}}{z^{\la(n, j_3^{\vee})+m-p+i+1}}\\
        &\hspace{1.5cm}\times Y_{M^1}\left(e^{(1-y)L_{(1)}}(1-y)^{-2L_{(0)}}(-1)^{L_{(0)}}a, \frac{y}{1-y}\right)e^{L_{(1)}}e^{\pi \iu L_{(0)}}v\mid_{y^\alpha=e^{\alpha \pi \iu}z^\alpha},\\
        \intertext{Since $y^{\alpha}=e^{\pi \iu \alpha}z^{\alpha}$ for $\alpha\in \C$, in particular, $z^k=e^{-\pi \iu k}y^k=(-y)^k$ for $k\in \Z$,}
        &=e^{-\pi \iu}\Res_y \sum_{i=0}^{\lf p \rf}(-1)^i\binom{\la(n, j_3^{\vee})+m-p+i}{i}\frac{(1-y)^{\wt a+\la(n, j_3^{\vee})}}{e^{-\pi \iu\left(\la(n, j_3^{\vee})+m-p+i+1\right)}y^{\la(n, j_3^{\vee})+m-p+i+1}}\\
        &\hspace{1.5cm}\times Y_{M^1}\left(e^{(1-y)L_{(1)}}(1-y)^{-2L_{(0)}}(-1)^{L_{(0)}}a, \frac{y}{1-y}\right)e^{L_{(1)}}e^{\pi \iu L_{(0)}}v,\\
        &=\Res_x\sum_{i=0}^{\lf p \rf}\sum_{j\geq 0}e^{\pi \iu \left(p-m-\la(n, j_3^{\vee})\right)}\binom{\la(n, j_3^{\vee})+m-p+i}{i}\\
        &\times \frac{(1+x)^{\la(n, j_3^{\vee})+m-p+i+1-\wt a-\la(n, j_3^{\vee})-j+2\wt a-2}}{x^{\la(n, j_3^{\vee})+m-p+i+1}} Y_{M^1}\left(g_1^{-1}\frac{1}{j!}L_{(1)}^j(-1)^{L_{(0)}}a, x\right)\phi_{+}(v)\\
        &=\Res_x\sum_{i=0}^{\lf p \rf}(-1)^{p-m-\la(n, j_3^{\vee})}\binom{\la(n, j_3^{\vee})+m-p+i}{i}\\
        &\hspace{1.5cm}\times \frac{(1+x)^{m-p+i-1}}{x^{\la(n, j_3^{\vee})+m-p+i+1}} Y_{M^1}\left((1+x)^{L_{(0)}}g_1^{-1}\phi(a), x\right)\phi_{+}(v)\\
        &=\phi_{+}(v)\underline{\ast}_{g_2, m, p}^{n}\phi(a).
    \end{align*}
    The last equality follows from the fact that $m-p=\la(m, j_2)-\lf p \rf$ when $\overline{\Tilde{p}-\Tilde{m}+j_2}=0$.
    Letting $p=m$ in the above calculation shows that $\phi_{+}$ is a left $A_{g_2^{-1}, m}(V)$-module homomorphism.
    
    Next we show $\phi_{+}(v\underline{\ast}_{g_3^{-1}, n, p}^m g_1a)=\phi(a)\bar{\ast}_{g_3, m, p}^n \phi_{+}(v)$. Again, they are both zero when $\overline{\Tilde{n}-\Tilde{p}+j_1+j_2}\neq0$. When $\overline{\Tilde{n}-\Tilde{p}+j_1+j_2}=0$, we have
    \begin{align*}
        \MoveEqLeft
        \phi_{+}(v\underline{\ast}_{g_3^{-1}, n, p}^m g_1a)\\
        &=\Res_z\sum_{i=0}^{\lf p \rf}(-1)^{p-n-\la(m, j_2)}\binom{\la(m, j_2)+n-p+i}{i}\\
        &\hspace{1.5cm}\times \frac{(1+z)^{\wt a+\la(n, j_3^{\vee})-\lf p \rf+i-1}}{z^{\la(m, j_2)+n-p+i+1}}e^{L_{(1)}}e^{\pi \iu L_{(0)}}Y_{M^1}(a, z)v\\
        &=e^{-\pi \iu}\Res_y\sum_{i=0}^{\lf p \rf}(-1)^{i+1}\binom{\la(m, j_2)+n-p+i}{i}\frac{(1-y)^{\wt a+\la(n, j_3^{\vee})-\lf p \rf+i-1}}{y^{\la(m, j_2)+n-p+i+1}}\\
        &\hspace{1.5cm}\times Y_{M^1}\left(e^{(1-y)L_{(1)}}(1-y)^{-2L_{(0)}}(-1)^{L_{(0)}}a, \frac{y}{1-y}\right) e^{L_{(1)}}e^{\pi \iu L_{(0)}}v\mid_{y^\alpha=e^{\alpha \pi \iu}z^\alpha}\\
        &=\Res_x\sum_{i=0}^{\lf p \rf}\sum_{j\geq 0}(-1)^{i}\binom{\la(m, j_2)+n-p+i}{i}\frac{(1+x)^{\la(m, j_2)+n-p-\la(n, j_3^{\vee})+\lf p \rf}}{x^{\la(m, j_2)+n-p+i+1}}\\
        &\hspace{1.5cm}\times Y_{M^1}\left(\frac{1}{j!}(1+x)^{L_{(0)}}L_{(1)}^j(-1)^{L_{(0)}}a, x\right)
        e^{L_{(1)}}e^{\pi \iu L_{(0)}}v\\
        &=\phi(a)\bar{\ast}_{g_3, m, p}^n \phi_{+}(v).
    \end{align*}
    The last equality follows from the fact that $n-p=\la(n, j_3^{\vee})-\lf p \rf$ when $\overline{\Tilde{n}-\Tilde{p}+j_1+j_2}=0$. Letting $p=n$ shows that $\phi_{+}$ is a right $A_{g_3^{-1}, n}(V)$-module homomorphism.
\end{proof}
From now on, for $v\in M^1$, we still use $v$ to denote the image of $v$ in $\A_{g_3, g_2, n, m}(M^1)$ for any $n, m\in \frac{1}{T}\N$ when it is clear from the context. 

Let $m\in \frac{1}{T}\N$ and $U$ be a left $A_{g_2, m}(V)$-module. Set 
\begin{equation}
    \mathcal{M}(M^1, U)=\oplus_{n\in \frac{1}{T}\N} \mathcal{A}_{g_3, g_2, n, m}(M^1)\otimes_{A_{g_2, m}(V)}U.
\end{equation}
Then $\mathcal{M}(M^1, U)$ has a $\frac{1}{T}\N$-grading with \[\mathcal{M}(M^1, U)(n)=\mathcal{A}_{g_3, g_2, n, m}(M^1)\otimes_{A_{g_2, m}(V)}U\] for $n\in \frac{1}{T}\N$ and each piece $\mathcal{M}(M^1, U)(n)$ is an $A_{g_3, n}(V)$-module.
We will show that the whole space $\mathcal{M}(M^1, U)$ is an admissible $g_3$-twisted $V$-module. For this purpose, we first need to define a $g_3$-twisted vertex operator module map. For $a\in V^{(j_1, j_2)}$ homogeneous, $p\in \frac{1}{T}\Z$, and $v\otimes u\in \mathcal{M}(M^1, U)(n)$, define a linear operator $a_{(p)}$ from $\mathcal{M}(M^1, U)(n)$ to $\mathcal{M}(M^1, U)(n+\wt a-p-1)$ by 
\begin{equation}\label{eq:module-structure-of-M}
    a_{(p)}(v\otimes u)=
    \left\{
    \begin{aligned}
        (a\bar{\ast}_{g_3, m, n}^{n+\wt a-p-1}v)\otimes u,& \quad  \text{if} \quad n+\wt a-p-1\geq 0;\\
        0, & \quad \text{othewise.}
    \end{aligned}
    \right.
\end{equation}
It follows from the definition \ref{def:left-action} that if $p\notin \frac{j_1+j_2}{T}+\Z$, $a_{(p)}=0$. 
\begin{lemma}\label{lem:well-definedness-of-operator}
    For homogeneous $a\in V$, the action $a_{(p)}$ is well-defined.
\end{lemma}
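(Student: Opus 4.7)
The plan is to check the two well-definedness requirements for the formula
\[a_{(p)}(v \otimes u) = (a \bar{\ast}_{g_3, m, n}^{n'} v) \otimes u, \qquad n' := n + \wt a - p - 1,\]
on $\mathcal{M}(M^1, U)(n) = \mathcal{A}_{g_3, g_2, n, m}(M^1) \otimes_{A_{g_2, m}(V)} U$: first, that the class of $a \bar{\ast}_{g_3, m, n}^{n'} v$ in $\mathcal{A}_{g_3, g_2, n', m}(M^1)$ depends only on the class of $v$ in $\mathcal{A}_{g_3, g_2, n, m}(M^1) = M^1/\mathcal{O}_{g_3, g_2, n, m}(M^1)$, and second, that the assignment descends through the $A_{g_2, m}(V)$-balance of the tensor product.

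The case $n' < 0$ is trivial since $a_{(p)}$ is then zero by definition, so assume $n' \geq 0$. The first requirement is immediate from \eqref{eq:a-star-O} in Proposition \ref{prop:star-transpotation}: renaming the pair of indices $(p, n)$ there as $(n, n')$ gives
\[a \bar{\ast}_{g_3, m, n}^{n'} \mathcal{O}_{g_3, g_2, n, m}(M^1) \subseteq \mathcal{O}_{g_3, g_2, n', m}(M^1),\]
so replacing a lift of the class $v$ by any other lift changes the output by an element of $\mathcal{O}_{g_3, g_2, n', m}(M^1)$.

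For the second requirement, given $b \in V$ and $u \in U$ the defining relation of the tensor product over $A_{g_2, m}(V)$ rewrites
\[(a \bar{\ast}_{g_3, m, n}^{n'} v) \otimes ([b] \cdot u) = \bigl((a \bar{\ast}_{g_3, m, n}^{n'} v) \underline{\ast}_{g_2, m}^{n'} b\bigr) \otimes u,\]
so one is reduced to showing
\[(a \bar{\ast}_{g_3, m, n}^{n'} v) \underline{\ast}_{g_2, m}^{n'} b \;\equiv\; a \bar{\ast}_{g_3, m, n}^{n'} (v \underline{\ast}_{g_2, m}^{n} b) \pmod{\mathcal{O}_{g_3, g_2, n', m}(M^1)}.\]
This is exactly the third congruence in Proposition \ref{prop:subsets-of-O} after the specialization $(n, p_1, p_2) \mapsto (n', m, n)$ of the parameters there.

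Finiteness of the sum defining $a \bar{\ast}_{g_3, m, n}^{n'} v$ follows from the truncation $i \in \{0, \ldots, \lf n \rf\}$ in \eqref{eq:left-composition}, and the vanishing of $a_{(p)}$ for $p \notin \frac{j_1 + j_2}{T} + \Z$, already noted in the text, is built into the corresponding vanishing convention for $\bar{\ast}_{g_3, m, n}^{n'}$. The argument is therefore pure bookkeeping: all conceptual content has been absorbed into Propositions \ref{prop:star-transpotation} and \ref{prop:subsets-of-O}, and the only thing to watch is to get the degree shift $n \mapsto n' = n + \wt a - p - 1$ correct when specializing the indices.
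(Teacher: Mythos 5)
Your proof is correct and follows essentially the same route as the paper: well-definedness with respect to the quotient by $\mathcal{O}_{g_3, g_2, n, m}(M^1)$ via \eqref{eq:a-star-O} of Proposition \ref{prop:star-transpotation}, and well-definedness with respect to the $A_{g_2, m}(V)$-balance of the tensor product via the third congruence of Proposition \ref{prop:subsets-of-O} with the specialization $(n, p_1, p_2) \mapsto (n', m, n)$. The only difference is that you spell out the index substitutions explicitly, which the paper leaves implicit.
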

\begin{proof}
    By Proposition \ref{prop:star-transpotation}, if $v\in \mathcal{O}_{g_3, g_2, n, m}(M^1)$, then 
    \begin{equation}
        a\bar{\ast}_{g_3, m, n}^{n+\wt a-p-1}v\in \mathcal{O}_{g_3, g_2, n+\wt a-p-1, m}(M^1).
    \end{equation}
    Thus $a_{(p)}(v\otimes u)=0$ for $v\in \mathcal{O}_{g_3, g_2, n, m}(M^1)$.  Now let $b\in V$, then by Proposition \ref{prop:subsets-of-O},
    \begin{align*}
        \MoveEqLeft
        a_{(p)}(v\underline{\ast}_{g_2, m, m}^n b\otimes u)
        =\big(a\bar{\ast}_{g_3, m, n}^{n+\wt a-p-1}(v\underline{\ast}_{g_2, m, m}^n b)\big)\otimes u\\
        &=\big((a\bar{\ast}_{g_3, m, n}^{n+\wt a-p-1}v)\underline{\ast}_{g_2, m, m}^{n+\wt a-p-1}b\big)\otimes u
        =(a\bar{\ast}_{g_3, m, n}^{n+\wt a-p-1}v)\otimes bu\\
        &=a_{(p)}(v\otimes bu).
    \end{align*}
\end{proof}
We denote $\mathcal{M}(M^1, U)$ by $\mathcal{M}$ for short if it is clear from the context. For $a\in V^{(j_1, j_2)}$, define 
\begin{equation}
    Y_{\mathcal{M}}(a, z)=\sum_{p\in \frac{1}{T}\Z}a_{(p)}z^{-p-1}.
\end{equation}
We now show $(M, Y_{\mathcal{M}})$ is an admissible $g_3$-twisted $V$-module.
\begin{proposition}\label{prop:vacuum-action}
    For $v\in M^1$, we have $\vac\bar{\ast}_{g_3, m, p}^{n}v=\delta_{p, n}v$, $v\underline{\ast}_{g_2, m, p}^n\vac=\delta_{m, p}v$.
\end{proposition}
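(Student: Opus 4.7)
The plan is a direct computation, exploiting that $\vac\in V^{(0,0)}$ collapses almost every ingredient of \eqref{eq:left-composition} and \eqref{eq:right-composition}. First I would record the simplifying facts for $a=\vac$: one has $j_1=j_2=0$ (hence $j_3^\vee=0$), $\wt\vac=0$, $L_{(0)}\vac=0$, $g_1^{-1}\vac=\vac$, and $Y_{M^1}(\vac,z)=\id_{M^1}$; moreover $\la(m,0)=\lf m\rf$ and $\la(n,0)=\lf n\rf$ because $\de_{\Tilde{x}}(0)=1$. Next I would observe that the non-vanishing conditions in the definitions become $\Tilde{p}=\Tilde{n}$ for $\bar\ast$ and $\Tilde{p}=\Tilde{m}$ for $\underline\ast$; whenever these fail, both sides are automatically $0$ and the claim is trivial.

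Assuming $\Tilde{p}=\Tilde{n}$ and setting $k:=n-p\in\Z$, the definition \eqref{eq:left-composition} reduces to
\[
\vac\bar{\ast}_{g_3,m,p}^n v=\Bigl(\sum_{i=0}^{\lf p\rf}(-1)^i\binom{\lf m\rf+k+i}{i}\binom{\lf m\rf}{\lf m\rf+k+i}\Bigr)v,
\]
after extracting the residue via $\Res_z(1+z)^{\lf m\rf}/z^{\lf m\rf+k+i+1}=\binom{\lf m\rf}{\lf m\rf+k+i}$. I would then split on the sign of $k$: if $k>0$ every second binomial vanishes; if $k=0$ only the $i=0$ term survives and gives $v$; if $k<0$ and $q:=\lf m\rf+k\ge0$, the trinomial identity $\binom{\lf m\rf}{q+i}\binom{q+i}{i}=\binom{\lf m\rf}{q}\binom{\lf m\rf-q}{i}$ reduces the sum to $\binom{\lf m\rf}{q}(1-1)^{\lf m\rf-q}=0$, while if $q<0$ each term is $0$ by term-wise inspection of the binomial factors. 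Hence $\vac\bar{\ast}_{g_3,m,p}^n v=\de_{p,n}v$.

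For the right action, with $\Tilde{p}=\Tilde{m}$ and $k:=m-p\in\Z$, the same substitutions in \eqref{eq:right-composition} give
\[
v\underline{\ast}_{g_2,m,p}^n\vac=(-1)^{-k-\lf n\rf}\Bigl(\sum_{i=0}^{\lf p\rf}\binom{\lf n\rf+k+i}{i}\binom{k+i-1}{\lf n\rf+k+i}\Bigr)v.
\]
I would again split on the sign of $k$. When $k>0$, $(1+z)^{k+i-1}$ is a polynomial of degree $k+i-1<\lf n\rf+k+i$, so every coefficient vanishes. When $k=0$, the factor $\binom{i-1}{\lf n\rf+i}$ is $0$ for $i\ge 1$, and for $i=0$ one uses $\binom{-1}{\lf n\rf}=(-1)^{\lf n\rf}$, which cancels the prefactor to yield $v$. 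The only delicate case is $k<0$: here $(1+z)^{k+i-1}$ is a genuine infinite series for small $i$, so the ``coefficient'' has to be read as a generalized binomial; I expect this to be the main (minor) obstacle. I would dispatch it either by converting $\binom{k+i-1}{\lf n\rf+k+i}$ via $\binom{-A}{B}=(-1)^B\binom{A+B-1}{B}$ and then applying the same Vandermonde-style cancellation as in the left action, or more cleanly by recognizing the whole sum as $(-1)^{-k-\lf n\rf}\Res_z(1+z)^{-\lf n\rf-1}z^{-1}\cdot(\text{polynomial})$ and noting that $(1+z)^{-\lf n\rf-1}$ has no $z^{-1}$ term. Either route yields $0$ when $k\ne0$, completing the identity $v\underline{\ast}_{g_2,m,p}^n\vac=\de_{m,p}v$.
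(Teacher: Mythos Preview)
Your proof is correct and follows essentially the same direct-computation approach as the paper: both reduce $\vac\bar{\ast}_{g_3,m,p}^n v$ to the binomial sum $\sum_{i=0}^{\lf p\rf}(-1)^i\binom{\lf m\rf+k+i}{i}\binom{\lf m\rf}{\lf m\rf+k+i}$ and dispatch it by cases on the sign of $k$, using the trinomial identity to collapse the $k<0$ case. The paper only carries out the left action and declares the right action ``similar''; you go further and actually sketch it, correctly identifying that the sign-flip $\binom{-A}{B}=(-1)^B\binom{A+B-1}{B}$ converts the right-action sum into exactly the left-action form with $\lf n\rf$ in place of $\lf m\rf$, after which the same case analysis applies.
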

\begin{proof}
We prove $\vac\bar{\ast}_{g_3, m, p}^{n}v=\delta_{p, n}v$, the proof for the other one is similar. If $\Tilde{p}-\Tilde{n}\neq 0$, then $\vac\bar{\ast}_{g_3, m, p}^{n}v=0$. If $\Tilde{p}-\Tilde{n}=0$, then
\begin{align*}
    \MoveEqLeft
    \vac\bar{\ast}_{g_3, m, p}^{n}v\\
    =&\Res_z \sum_{i=0}^{\lf p \rf}(-1)^i\binom{\lf m \rf+\lf n \rf-\lf p \rf+i}{i}\frac{(1+z)^{\lf m \rf}}{z^{\lf m \rf+\lf n \rf-\lf p \rf+1+i}}Y_{M^1}(\vac, z)v\\
    =&\sum_{i=0}^{\lf p \rf}(-1)^i\binom{\lf m \rf+\lf n \rf-\lf p \rf+i}{i}\binom{\lf m \rf}{\lf m \rf+\lf n \rf-\lf p \rf+i}v.
\end{align*}
In this case, when $\lf n \rf-\lf p \rf>0$, $\binom{\lf m \rf}{\lf m \rf+\lf n \rf-\lf p \rf+i}=0$, hence $\vac\bar{\ast}_{g_3, m, p}^{n}v=0$; When $\lf n \rf-\lf p \rf=0$, $\vac\bar{\ast}_{g_3, m, p}^{n}v=v$; When $\lf n \rf-\lf p \rf<0$, 
\begin{align*}
    \MoveEqLeft
    \vac\bar{\ast}_{g_3, m, p}^{n}v\\
    =&\sum_{i=0}^{\lf p \rf}(-1)^i\binom{\lf m \rf+\lf n \rf-\lf p \rf+i}{i}\binom{\lf m \rf}{\lf m \rf+\lf n \rf-\lf p \rf+i}v\\
    =&\sum_{i=0}^{\lf p \rf-\lf n \rf}(-1)^i\binom{\lf m \rf+\lf n \rf-\lf p \rf+i}{i}\binom{\lf m \rf}{\lf m \rf+\lf n \rf-\lf p \rf+i}v.
\end{align*}
Set $k=\lf m \rf+\lf n \rf-\lf p \rf\geq0$. If $k>0$, then $$\vac\bar{\ast}_{g_3, m, p}^{n}v=\sum_{i=0}^{\lf m \rf-k}(-1)^i\binom{k+i}{i}\binom{\lf m \rf}{k+i}v=\sum_{i=0}^{\lf m \rf-k}(-1)^i\binom{\lf m \rf}{k}\binom{\lf m \rf-k}{i}v=0;$$ If $k<0$, then $\vac\bar{\ast}_{g_3, m, p}^{n}v=\sum_{i=-k}^{\lf m \rf-k}(-1)^i\binom{k+i}{i}\binom{\lf m \rf}{k+i}v=0$. The proof is complete.
\end{proof}
\begin{lemma}\label{lem:truncation-vacuum}
    For homogeneous $a\in V^{(j_1, j_2)}$, $v\otimes u\in \mathcal{A}_{g_3, g_2, n, m}(M^1)\otimes U$, and $p\in \frac{j_1+j_2}{T}+\Z$, we have
    \begin{enumerate}
        \item $a_{(p)}(v\otimes u)=0$ when $p$ is sufficiently large;
        \item $Y_{\mathcal{M}}(\vac, z)=1$.
    \end{enumerate}
\end{lemma}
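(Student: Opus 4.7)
Both parts of the lemma are essentially immediate consequences of the definition \eqref{eq:module-structure-of-M} together with Proposition \ref{prop:vacuum-action}; the real content has already been absorbed into Lemma \ref{lem:well-definedness-of-operator}, so what remains is bookkeeping.

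For (1), I would simply observe that the definition \eqref{eq:module-structure-of-M} forces $a_{(p)}(v\otimes u)=0$ whenever $n+\wt a-p-1<0$. Thus for a homogeneous element $v\otimes u$ of degree $n$, the operator $a_{(p)}$ annihilates it for every $p>n+\wt a-1$. Since an arbitrary element of $\mathcal{M}$ is a finite sum of homogeneous pieces, truncation at sufficiently large $p$ follows by linearity.

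For (2), the plan is to apply Proposition \ref{prop:vacuum-action} with $a=\vac$, so that $\wt a=0$. For $v\otimes u\in\mathcal{M}(n)$ the definition \eqref{eq:module-structure-of-M} gives
\[
\vac_{(p)}(v\otimes u)=\bigl(\vac\,\bar{\ast}_{g_3,m,n}^{\,n-p-1}v\bigr)\otimes u
\]
when $n-p-1\geq 0$, and zero otherwise. Reading off indices against Proposition \ref{prop:vacuum-action} (its ``$p$'' is our middle slot $n$, its ``$n$'' is our upper slot $n-p-1$), the right-hand side equals $\delta_{n,\,n-p-1}\,v\otimes u=\delta_{p,-1}\,v\otimes u$. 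The degree condition $n-p-1\geq 0$ is automatically satisfied at $p=-1$, so $\vac_{(-1)}$ acts as the identity on each $\mathcal{M}(n)$ while $\vac_{(p)}=0$ for all $p\neq -1$, and hence $Y_{\mathcal{M}}(\vac,z)=\sum_{p\in\frac{1}{T}\Z}\vac_{(p)}z^{-p-1}=\id_{\mathcal{M}}$.

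Because both items reduce to inspection of the relevant definitions, I do not foresee a genuine obstacle. The only thing that requires care is keeping straight the roles of the three indices in the notation $a\bar{\ast}_{g_3,m,\cdot}^{\cdot}v$: $n$ as the source degree (middle slot), $n+\wt a-p-1$ as the target degree (upper slot), and $p$ as the Laurent index in $Y_\mathcal{M}(a,z)$. Once the identification with Proposition \ref{prop:vacuum-action} is made correctly, there is nothing further to verify.
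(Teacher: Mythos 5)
Your proof is correct and follows essentially the same route as the paper, which simply cites the definition of $a_{(p)}$ for part (1) and Proposition \ref{prop:vacuum-action} for part (2); you have merely spelled out the index identifications that the paper leaves implicit.
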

\begin{proof}
    (1) follows from the definition of $a_{(p)}$ and (2) follows from Proposition \ref{prop:vacuum-action}.
\end{proof}
Recall \ref{def:g-twisted-module}, we still need to show that $Y_{\mathcal{M}}$ satisfies the Jacobi identity. It suffices to show that $Y_{\mathcal{M}}$ satisfies the associativity. 
\begin{lemma}\label{lem:associativity-for-M}
    For $a\in V^{(j_1, j_2)}$ homogeneous, and $b\in V$, we have
    \begin{equation}
        (z_0+z_2)^{\wt a+q}Y_{\mathcal{M}}(a, z_0+z_2)Y_{\mathcal{M}}(b, z_2)=(z_2+z_0)^{\wt a+q}Y_{\mathcal{M}}\left(Y(a, z_0)b, z_2\right)
    \end{equation}
    on $\mathcal{M}(n)$, where $q=\la(n, \overline{j_1+j_2})$.
\end{lemma}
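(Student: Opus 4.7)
My plan is to reduce the module-associativity of $Y_{\mathcal{M}}$ to the bimodule-level $\bar{\ast}$-associativity already established in Proposition \ref{prop:subsets-of-O}, followed by a formal-variable rearrangement that collapses an inner $\bar{\ast}$-product on $V$ into the action of $Y(a,z_0)b$. To start, fix $v\otimes u\in \mathcal{M}(n)$ and expand both sides coefficient-wise via \eqref{eq:module-structure-of-M}: the coefficient of $(z_0+z_2)^{-p-1}z_2^{-q-1}$ in $Y_{\mathcal{M}}(a,z_0+z_2)Y_{\mathcal{M}}(b,z_2)(v\otimes u)$ is $\bigl(a\bar{\ast}_{g_3,m,n_q}^{n_{p,q}}(b\bar{\ast}_{g_3,m,n}^{n_q}v)\bigr)\otimes u$, with $n_q=n+\wt b-q-1$ and $n_{p,q}=n_q+\wt a-p-1$, vanishing when either drops below $0$.

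Next, Proposition \ref{prop:subsets-of-O} (applied with outer index $n_{p,q}$ and intermediate indices $p_1=n$, $p_2=n_q$) replaces this coefficient in $\mathcal{A}_{g_3,g_2,n_{p,q},m}(M^1)$ by $\bigl((a\bar{\ast}_{g_3,n,n_q}^{n_{p,q}}b)\bar{\ast}_{g_3,m,n}^{n_{p,q}}v\bigr)\otimes u$. The inner product $(a\bar{\ast}_{g_3,n,n_q}^{n_{p,q}}b)$ is now computed on $V$ itself for the quadruple $(V,1,g_3,g_3)$; in that setting $a$ is a $g_3$-eigenvector labelled by $\overline{j_1+j_2}$, so the critical exponent $\la(n,\overline{j_1+j_2})=q$ appears in its explicit formula \eqref{eq:left-composition}.

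Multiplying both sides by $(z_0+z_2)^{\wt a+q}$ and expanding $(z_0+z_2)^{\wt a+q-p-1}$ in nonnegative powers of $z_0$, the binomial coefficients combine with the finite sum inside the definition of $a\bar{\ast}_{g_3,n,n_q}^{n_{p,q}}b$. The choice $q=\la(n,\overline{j_1+j_2})$ is precisely the minimal element of $\tfrac{\overline{j_1+j_2}}{T}+\Z$ making $z^{\wt a+q}Y_{M^3}(a,z)$ polynomial on $\Omega_n(M^3)$ (cf.\ Remark \ref{rmk:choice-of-powers}); this is what ensures that the binomial expansions of $(z_0+z_2)^{\wt a+q}$ and $(z_2+z_0)^{\wt a+q}$ agree on the relevant subspace. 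The sum should telescope, for each fixed $q$, into a single residue $\Res_{z_0}z_0^{(\cdot)}Y(a,z_0)b$ placed into $Y_{\mathcal{M}}(-,z_2)$, yielding exactly $(z_2+z_0)^{\wt a+q}Y_{\mathcal{M}}(Y(a,z_0)b,z_2)(v\otimes u)$.

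The main obstacle is this last generating-function collapse: tracking the binomial coefficients from $(z_0+z_2)^{\wt a+q-p-1}$ together with those from the explicit formula for $a\bar{\ast}_{g_3,n,n_q}^{n_{p,q}}b$, and verifying that the vanishing conditions $n_q,n_{p,q}\ge 0$ on the left match the natural grading constraints on the right. I expect a direct if lengthy formal-variable calculation modelled on the analogous associativity proofs in \cite{DJ08a, DJ08b}, working cleanly precisely because $q=\la(n,\overline{j_1+j_2})$ is tuned to make everything truncate.
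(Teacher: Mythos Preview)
Your approach is correct and matches the paper's: both reduce the associativity of $Y_{\mathcal{M}}$ to Proposition \ref{prop:subsets-of-O} for the $\bar{\ast}$-associativity, observe that the inner product $a\bar{\ast}_{g_3,n,n_q}^{n_{p,q}}b$ on $V$ coincides with the product $a\ast_{g,n,n_q}^{n_{p,q}}b$ of \cite{DJ08b} (cf.\ Remark \ref{rmk:connection-with-DJ}), and then defer the remaining binomial telescoping to the calculation already carried out in \cite[Lemmas 5.9, 5.10]{DJ08b}. One small point: you use $q$ both as the summation index in $z_2^{-q-1}$ and as the fixed quantity $\la(n,\overline{j_1+j_2})$, which should be disambiguated when you write this out.
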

\begin{proof}
    Replacing $A_{g, n, m}(V)$ by $\mathcal{A}_{g_3, g_2, n, m}(M^1)$  in the proof of \cite[Lemmas 5.9, 5.10]{DJ08b}, and note that $a\ast_{g, r, s}^t b=a\bar{\ast}_{g_3, r, s}^t b$ for $r, s,t \in \frac{1}{T}\N$, we can see that the same proof works here with the help of Proposition \ref{prop:subsets-of-O}.
\end{proof}
\begin{theorem}\label{thm:g_3-twisted-module}
    Let $g_1, g_2$ be two commuting automorphisms of $V$ such that $g_1^T=g_2^T=1$ and $g_3=g_1g_2$. Let $M^1$ be a $g_1$-twisted $V$-module and $U$ be an $A_{g, m}(V)$-module for some $m\in \frac{1}{T}\N$. Then \[\M(M^1, U)=\oplus_{n\in \frac{1}{T}\N}\A_{g_3, g_2, n, m}(M^1)\otimes_{A_{g_2, m}(V)}U\] is an admissible $g_3$-twisted module with degree $n$ subspace \[\M(M^1, U)(n)=\A_{g_3, g_2, n, m}(M^1)\otimes_{A_{g_2, m}(V)}U.\]
\end{theorem}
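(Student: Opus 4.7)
The plan is to verify, one at a time, the axioms of Definition \ref{def:g-twisted-module} together with the admissibility grading. The bulk of the technical input has already been packaged: the well-definedness of $a_{(p)}$ is Lemma \ref{lem:well-definedness-of-operator}, the truncation and vacuum axioms are Lemma \ref{lem:truncation-vacuum}, and the associativity of $Y_{\mathcal{M}}$ is Lemma \ref{lem:associativity-for-M}. So the remaining tasks reduce to: (i) confirming that the $\frac{1}{T}\Z$-grading of each $Y_{\mathcal{M}}(a, z)$ is correct for a $g_3$-twisted operator, (ii) confirming the admissibility grading $\mathcal{M}(n)$, and (iii) establishing the twisted Jacobi identity from the associativity Lemma \ref{lem:associativity-for-M}.

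First I would dispatch (i) and (ii). By construction \eqref{eq:module-structure-of-M}, $a_{(p)}$ sends $\mathcal{M}(n)$ into $\mathcal{M}(n+\wt a-p-1)$ or to zero, which gives (ii) immediately. For (i), I would inspect the formula \eqref{eq:left-action}: for $a\in V^{(j_1, j_2)}$, the product $a\bar{\ast}_{g_3, m, p}^{n}v$ is nonzero only when $\overline{\Tilde p-\Tilde n-j_1-j_2}=0$, so the only powers of $z$ appearing in $Y_{\mathcal{M}}(a, z)$ lie in $\frac{j_1+j_2}{T}+\Z$; and since $g_3 a=g_1g_2 a=e^{2\pi\iu(j_1+j_2)/T}a$, this matches the requirement $Y_{\mathcal M}(a,z)\in (\mathrm{End}\,\mathcal{M})\{z\}$ with indices in $\frac{\overline{j_1+j_2}}{T}+\Z$. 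Combined with Lemma \ref{lem:truncation-vacuum}, this settles all axioms except the Jacobi identity.

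For (iii), my strategy is the standard two-step reduction used throughout \cite{DLM98a, DLM98c, DJ08a, DJ08b}: derive Jacobi from associativity plus commutativity via the $\delta$-function decomposition
\[
z_0^{-1}\delta\!\left(\tfrac{z_1-z_2}{z_0}\right)-z_0^{-1}\delta\!\left(\tfrac{-z_2+z_1}{z_0}\right)=z_1^{-1}\delta\!\left(\tfrac{z_2+z_0}{z_1}\right),
\]
appropriately twisted by factors of $\big(\tfrac{z_1-z_2}{z_0}\big)^{j_1/T}$ etc. Lemma \ref{lem:associativity-for-M} supplies one side of this decomposition. The missing ingredient is a commutativity statement of the form
\[
(z_1-z_2)^{N+j_1/T}Y_{\mathcal{M}}(a, z_1)Y_{\mathcal{M}}(b, z_2)=(-z_2+z_1)^{N+j_1/T}Y_{\mathcal{M}}(b, z_2)Y_{\mathcal{M}}(a, z_1)
\]
for $N$ sufficiently large (depending on the degree of the piece and on $a, b$), which I would prove by computing both sides on $(v\otimes u)\in \mathcal{M}(n)$. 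Each side expands via iterated left actions $a\bar{\ast}_{g_3, m, n'}^{n''}(b\bar{\ast}_{g_3, m, n}^{n'}v)$; after applying the first relation of Proposition \ref{prop:subsets-of-O} to pass from nested products on the left to a single product of a rearranged element, both sides reduce to the same element of $\mathcal{A}_{g_3, g_2, n'', m}(M^1)$ modulo $\mathcal{O}_{g_3, g_2, n'', m}(M^1)$, as required. This argument is essentially the proof of \cite[Lemma 5.10]{DJ08b} with $A_{g, n, m}(V)$ replaced by $\mathcal{A}_{g_3, g_2, n, m}(M^1)$ and the $g$-component bookkeeping tracked carefully through the $V^{(j_1, j_2)}$ decomposition.

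The main obstacle, and the only genuinely new combinatorial check beyond citing \cite{DJ08b}, is keeping the $(j_1, j_2)$-refined bookkeeping consistent in the commutativity derivation. In \cite{DJ08b} the relevant associator identity is symmetric because only a single automorphism $g$ appears, whereas here the left action lives over $g_3$-twisted data and the right action over $g_2$-twisted data, with $g_3=g_1g_2$. One must therefore verify that the exponents $\la(m, j_2)$, $\la(n, \overline{-j_1-j_2})$, and the conjugation by $g_1^{\pm 1}$ appearing in \eqref{eq:left-action}--\eqref{eq:right-action} combine to give the same total $z$-exponent on both sides of the commutativity identity. Once this is in hand, the two-variable Jacobi identity follows by applying $\Res_{z_0}z_0^{x+j_1/T}$ for all $x\in\Z$ to the standard $\delta$-decomposition, exactly as in the proof of \cite[Theorem 3.4]{DLM98c} or \cite[Theorem 5.13]{DJ08b}; combined with the grading and vacuum/truncation axioms already verified, this completes the proof that $\mathcal{M}(M^1, U)$ is an admissible $g_3$-twisted $V$-module.
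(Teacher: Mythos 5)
Your plan is correct in structure but misidentifies what is actually required. Where you write that Lemma~\ref{lem:associativity-for-M} supplies ``one side of this decomposition'' and that ``the missing ingredient is a commutativity statement,'' you are overlooking the standard fact, explicitly invoked in the paper in the sentence immediately preceding Lemma~\ref{lem:associativity-for-M}, that for a \emph{module} map (as opposed to a general intertwining operator) weak associativity together with the truncation and vacuum axioms already implies the twisted Jacobi identity. The reason is that both inputs of $Y_{\mathcal{M}}(\,\cdot\,, z_1)Y_{\mathcal{M}}(\,\cdot\,, z_2)$ come from $V$, so one can bootstrap weak commutativity from weak associativity using the skew-symmetry $Y(a, z_0)b = e^{z_0 L_{(-1)}}Y(b, -z_0)a$ of the vertex algebra $V$ itself; this is the content of, e.g., the module version of the equivalence in Lepowsky--Li \cite{LL04} and its twisted analogue in \cite{DLM98a, DLM98c}. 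Hence the paper's one-line proof ``It follows from Lemma~\ref{lem:truncation-vacuum} and Lemma~\ref{lem:associativity-for-M}'' is complete; no separate commutativity lemma is needed.

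Moreover, the route you sketch for the commutativity you believe is missing would not go through as written. You propose to compare $a\bar{\ast}(b\bar{\ast}v)$ with $b\bar{\ast}(a\bar{\ast}v)$ ``after applying the first relation of Proposition~\ref{prop:subsets-of-O},'' but that relation is an \emph{associator} identity, $a\bar{\ast}_{g_3, m, p_2}^{n}(b\bar{\ast}_{g_3, m, p_1}^{p_2}v) \equiv (a\bar{\ast}_{g_3, p_1, p_2}^{n}b)\bar{\ast}_{g_3, m, p_1}^{n}v$ modulo $\mathcal{O}$, not a commutator identity; it reshapes the bracketing of nested left actions but does not interchange $a$ and $b$. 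The interchange has to come from skew-symmetry of $Y$ on $V$, combined with two applications of associativity. Your checks (i) and (ii) for the $\frac{\overline{j_1+j_2}}{T}+\Z$ grading of $Y_{\mathcal{M}}(a, z)$ and the admissibility grading are correct and match the paper. So the verdict: your plan is workable and identifies all the genuine auxiliary lemmas, but you have introduced an unnecessary commutativity step whose sketched proof is based on the wrong proposition; either cite the standard associativity-implies-Jacobi reduction for twisted modules, as the paper implicitly does, or rewrite the commutativity step using skew-symmetry of $V$.
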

\begin{proof}
    It follows from Lemma \ref{lem:truncation-vacuum} and Lemma \ref{lem:associativity-for-M}.
\end{proof}
Let $g$ be an automorphism of $V$ with finite order. Recall that for an $A_{g, m}(V)$-module $U$, there is a Verma type admissible $g$-twisted module $\overline{M}(U)$ generated by $U$ such that $\overline{M}(U)(m)=U$ (cf. \cite{DJ08b}). We give another construction of the Verma type $g$-twisted module.
\begin{theorem}\label{thm:Verma}
    Let $g\in \text{Aut }V$ with finite order $T$ and $U$ be an $A_{g, m}(V)$-module. Then $\mathcal{M}(V, U)=\oplus_{n\in \frac{1}{T}\N}\mathcal{A}_{g, g, n, m}(V)\otimes_{A_{g, m}(V)} U$ is an admissible $g$-twisted $V$-module generated by $U$ with $\mathcal{M}(V, U)(n)=\mathcal{A}_{g, g, n, m}(V)\otimes_{A_{g, m}(V)} U$ and with the following universal property: for any weak $g$-twisted $V$-module $W$ and an $A_{g, m}(V)$-module homomorphism $\sigma\colon U\mapsto \Omega_{m}(W)$, there is a unique $V$-module homomorphism $\bar{\sigma}\colon \mathcal{M}(V, U)\mapsto W$ of admissible $g$-twisted $V$-module that extends $\sigma$.
    \begin{proof}
        By Theorem \ref{thm:g_3-twisted-module}, $\mathcal{M}(V, U)$ is an admissible $g$-twisted module. $\mathcal{M}(V, U)$ is generated by $U$ (more precisely, $\A_{g, g, m, m}(V)\otimes_{A_{g, m}(V)}U$) since $\mathcal{A}_{g, g, m, m}(V)=A_{g, m}(V)$ by Remark \ref{rmk:recover} and $a_{(\wt a-1+m-n)}(\vac\otimes u)=\left(a\bar{\ast}_{g, m, m}^{n}\vac\right)\otimes u=a\otimes u$ in $\mathcal{A}_{g, g, n, m}(V)$ for any $n\in \frac{1}{T}\N$ and $a\in V$. Define $\bar{\sigma}\colon \mathcal{M}(V, U)\mapsto W$ by $\bar{\sigma}(a\otimes u)=o^{W}_{n, m}(a)\sigma(u)$ for $a\otimes u\in \mathcal{A}_{g, g, n, m}(V)\otimes_{A_{g, m}(V)} U$. Note that $o^{W}_{n, m}(a)=0$ for any $a\in \mathcal{O}_{g, g, n, m}(V)$, and $\bar{\sigma}(a\underline{\ast}_{g, m}^n b\otimes u)=o^{W}_{n, m}(a\underline{\ast}_{g, m}^n b) \sigma(u)=o^{W}_{n, m}(a)o^{W}_{m, m}(b)\sigma(u)=\bar{\sigma}(a\otimes bu)$ for $a\in \mathcal{A}_{g, g, n, m}(V), b\in A_{g, m}(V)$, thus $\bar{\sigma}$ is well-defined. It is a homomorphism since $\bar{\sigma}\big(b_{(p)}(a\otimes u)\big)=\bar{\sigma}(b\bar{\ast}_{g, m, n}^{n+\wt b-p-1}a\otimes u)=o^{W}_{n+\wt b-p-1, n}(b)o^{W}_{n, m}(a)\sigma(u)=b_{(p)}\bar{\sigma}(a\otimes u)$ for any homogeneous $b\in V$ and $p\in \frac{1}{T}\Z$. $\bar{\sigma}$ extends $\sigma$ in the sense that $\bar{\sigma}(\vac \otimes u)=\sigma(u)$. The uniqueness of $\bar{\sigma}$ is straightforward.
    \end{proof}
\end{theorem}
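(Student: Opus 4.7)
The theorem breaks naturally into three assertions: (i) $\mathcal{M}(V,U)$ is an admissible $g$-twisted module with the stated grading, (ii) it is generated by the subspace $U$ sitting in degree $m$, and (iii) it satisfies the universal property against $\sigma\colon U\to\Omega_m(W)$. My plan is to dispatch (i) by directly invoking Theorem \ref{thm:g_3-twisted-module}, then handle (ii) and (iii) by leveraging Propositions \ref{prop:reason-for-left-action} and \ref{prop:reason-for-right-action} in a very formal way, using the intertwining operator $I=Y_W$ (after decomposing $W$ into irreducibles).

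For (i), specialize the quadruple to $(V,1,g,g)$ with $M^1=V$: Theorem \ref{thm:g_3-twisted-module} directly gives that $\mathcal{M}(V,U)=\oplus_{n\in\frac{1}{T}\N}\mathcal{A}_{g,g,n,m}(V)\otimes_{A_{g,m}(V)}U$ is admissible $g$-twisted with degree-$n$ piece as claimed. For (ii), use Proposition \ref{prop:recover-algebras} to identify $\mathcal{A}_{g,g,m,m}(V)\otimes_{A_{g,m}(V)}U\cong U$, so that $u\in U$ is represented by $\vac\otimes u$. For any $a\in V$ homogeneous and $n\in\frac{1}{T}\N$, apply the operator $a_{(\wt a-1+m-n)}$ from \eqref{eq:module-structure-of-M} to $\vac\otimes u$; this produces $(a\,\bar{\ast}_{g,m,m}^{n}\vac)\otimes u$. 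The key point is that $a\,\bar{\ast}_{g,m,m}^{n}\vac=a$ in $\mathcal{A}_{g,g,n,m}(V)$: indeed, for every $I\in\mathcal{P}$ associated to an irreducible $g$-twisted module (so $M^2=M^3$), Proposition \ref{prop:reason-for-left-action} with $p=m$ and $v=\vac$ yields $o^{I}_{n,m}(a\,\bar{\ast}_{g,m,m}^{n}\vac)=o^{M^3}_{n,m}(a)\circ o^{I}_{m,m}(\vac)=o^{M^3}_{n,m}(a)=o^{I}_{n,m}(a)$, the second equality because $Y_{M^2}(\vac,z)=\id$. Hence $a-a\,\bar{\ast}_{g,m,m}^{n}\vac\in\mathcal{O}_{g,g,n,m}(V)$, which shows that every generator of $\mathcal{M}(V,U)(n)$ lies in $V\cdot(\vac\otimes U)$.

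For (iii), define $\bar{\sigma}\colon\mathcal{M}(V,U)\to W$ on elementary tensors by $\bar{\sigma}(a\otimes u)=o^{W}_{n,m}(a)\sigma(u)$ for $a\otimes u\in\mathcal{A}_{g,g,n,m}(V)\otimes_{A_{g,m}(V)}U$, noting that $\sigma(u)\in\Omega_m(W)$ so $o^{W}_{n,m}(a)\sigma(u)\in\Omega_n(W)\subseteq W$. Well-definedness has two pieces: first, $o^{W}_{n,m}$ vanishes on $\mathcal{O}_{g,g,n,m}(V)$, which follows by decomposing $W$ into irreducible $g$-twisted modules (via $g$-regularity, Proposition \ref{prop:rationality}) and applying Proposition \ref{prop:reducible-M2-M3} to each summand with $I=Y_W$; second, the balancing relation $\bar{\sigma}(a\underline{\ast}_{g,m}^{n}b\otimes u)=\bar{\sigma}(a\otimes bu)$ is exactly Proposition \ref{prop:reason-for-right-action} with $p=m$, applied to $I=Y_W$, combined with the fact that $\sigma$ is an $A_{g,m}(V)$-module map so that $o^{W}_{m,m}(b)\sigma(u)=\sigma(bu)$.

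That $\bar{\sigma}$ intertwines the $V$-actions is a direct translation of Proposition \ref{prop:reason-for-left-action} with $I=Y_W$: for homogeneous $b\in V$ and $p\in\frac{1}{T}\Z$, we compute $\bar{\sigma}(b_{(p)}(a\otimes u))=o^{W}_{n+\wt b-p-1,n}(b)\,o^{W}_{n,m}(a)\sigma(u)=b_{(p)}\bar{\sigma}(a\otimes u)$. Extension holds since $\bar{\sigma}(\vac\otimes u)=\sigma(u)$, and uniqueness follows from (ii) because any $V$-homomorphism extending $\sigma$ is determined on $\vac\otimes U$ and hence everywhere. The main obstacle is the identification $a\,\bar{\ast}_{g,m,m}^{n}\vac=a$ in $\mathcal{A}_{g,g,n,m}(V)$ used for the generation step; once one realizes this equality must be tested only against $o^{I}_{n,m}$ and noticed that $o^{I}_{m,m}(\vac)=\id$, the rest of the argument is an essentially mechanical application of the propositions already established.
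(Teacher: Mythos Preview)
Your proof is correct and follows essentially the same strategy as the paper's: invoke Theorem \ref{thm:g_3-twisted-module} for the module structure, identify $U$ with $\vac\otimes U$ via Proposition \ref{prop:recover-algebras}, prove generation by hitting $\vac\otimes u$ with $a_{(\wt a-1+m-n)}$, and define $\bar{\sigma}(a\otimes u)=o^{W}_{n,m}(a)\sigma(u)$ with well-definedness and the homomorphism property coming from Propositions \ref{prop:reason-for-left-action} and \ref{prop:reason-for-right-action}.

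One small imprecision in your generation step: the equalities $o^{I}_{m,m}(\vac)=\id$ and $o^{M^3}_{n,m}(a)=o^{I}_{n,m}(a)$ are literally true only when $I=Y_{M^2}$, yet $\mathcal{O}_{g,g,n,m}(V)$ is defined as an intersection over \emph{all} $I\in\mathcal{P}$. To close this, note (as in the proof of Proposition \ref{prop:L(-1)+L(0)}) that any nonzero $I\in\mathcal{I}\binom{M^3}{VM^2}$ with $M^2,M^3$ irreducible satisfies $I(-,z)=\phi\circ Y_{M^2}(-,z)$ for the $V$-homomorphism $\phi=I(\vac,z)\colon M^2\to M^3$; then $o^{I}_{n,m}(a\,\bar{\ast}_{g,m,m}^{n}\vac)=o^{M^3}_{n,m}(a)\circ\phi=\phi\circ o^{M^2}_{n,m}(a)=o^{I}_{n,m}(a)$, and your conclusion $a\,\bar{\ast}_{g,m,m}^{n}\vac=a$ in $\mathcal{A}_{g,g,n,m}(V)$ follows. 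The paper simply asserts this equality without elaboration, so your argument is in fact more explicit once this point is addressed.
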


\begin{corollary}
    The admissible $g$-twisted module $\mathcal{M}(V, U)$ coincides with $\overline{M}(U)$. Moreover, $\mathcal{A}_{g, g, n, m}(V)=A_{g, n, m}(V)$ for any $n, m\in \frac{1}{T}\N$.
\end{corollary}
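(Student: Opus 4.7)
The plan is to combine the universal properties established in Theorem \ref{thm:Verma} with the known Verma construction from \cite{DJ08b}. By Theorem \ref{thm:twistedZhu}(5), adapted so that the grading places $U$ at level $m$, the classical generalized Verma module $\overline{M}(U)$ generated by an $A_{g, m}(V)$-module $U$ is characterized up to unique isomorphism by the universal property that any $A_{g, m}(V)$-module map $U\to \Omega_m(W)$ into the level $m$ subspace of a weak $g$-twisted module $W$ extends uniquely to a $V$-module map $\overline{M}(U)\to W$. Theorem \ref{thm:Verma} has just shown that $\mathcal{M}(V, U)$ satisfies the very same universal property. My first step is to invoke uniqueness of universal objects to obtain a canonical isomorphism $\Phi\colon \mathcal{M}(V, U)\xrightarrow{\sim}\overline{M}(U)$ of admissible $g$-twisted $V$-modules extending $\mathrm{id}_U$; this already settles the first half of the corollary.

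For the bimodule identification, I would specialize $U=A_{g, m}(V)$, regarded as a left module over itself. The construction in \cite{DJ08b} realizes the degree $n$ piece of $\overline{M}(A_{g, m}(V))$ as $A_{g, n, m}(V)\otimes_{A_{g, m}(V)}A_{g, m}(V)\cong A_{g, n, m}(V)$, while Theorem \ref{thm:Verma} identifies the degree $n$ piece of $\mathcal{M}(V, A_{g, m}(V))$ with $\mathcal{A}_{g, g, n, m}(V)$. Restricting $\Phi$ to degree $n$ then yields an isomorphism $\Phi_n\colon \mathcal{A}_{g, g, n, m}(V)\xrightarrow{\sim}A_{g, n, m}(V)$ of left $A_{g, n}(V)$-modules.

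To see that $\Phi_n$ is actually a bimodule isomorphism, and in fact that the two quotient subspaces $\mathcal{O}_{g, g, n, m}(V)$ and $O_{g, n, m}(V)$ of $V$ literally coincide, I would trace how each $a\in V$ sits inside the degree $n$ piece. In both constructions one has $a\otimes[\vac]=a_{(\wt a-1+m-n)}(\vac\otimes[\vac])$ by \eqref{eq:module-structure-of-M} combined with the computation already carried out in the proof of Theorem \ref{thm:Verma}. Since $\Phi$ fixes $\vac\otimes[\vac]$ and intertwines every vertex operator, $\Phi_n$ must send the class of $a$ to the class of $a$, forcing $\mathcal{O}_{g, g, n, m}(V)=O_{g, n, m}(V)$. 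The left actions agree on the nose by Remark \ref{rmk:connection-with-DJ}, and the same remark shows that the right action $\underline{\ast}^{n}_{g, m}$ of this paper and the right action $\ast^{n}_{g, m, m}$ of \cite{DJ08b} differ by an element of $\mathcal{O}_{g, g, n, m}(V)$, so they induce identical operations after passing to the quotient.

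I do not expect any deep obstacle. The one bookkeeping point deserving care is checking that the identification $\overline{M}(A_{g, m}(V))(n)=A_{g, n, m}(V)$ from \cite{DJ08b} holds for arbitrary $m$ rather than only at the bottom level, and that the generation identity $a\otimes[\vac]=a_{(\wt a-1+m-n)}(\vac\otimes[\vac])$ really holds symmetrically in both constructions. Once those two points are confirmed, the corollary follows formally from the uniqueness of universal objects.
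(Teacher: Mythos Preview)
Your proposal is correct and follows essentially the same approach as the paper: invoke the uniqueness of universal objects to identify $\mathcal{M}(V,U)$ with $\overline{M}(U)$, then specialize $U=A_{g,m}(V)$ and compare degree-$n$ pieces. The paper's own proof is terser---it simply observes that both degree-$n$ pieces coincide after this specialization---whereas you add the extra step of tracing the class of each $a\in V$ through $\Phi_n$ to conclude that the two kernels $\mathcal{O}_{g,g,n,m}(V)$ and $O_{g,n,m}(V)$ literally agree and that the bimodule structures match; this is a welcome bit of additional care that the paper leaves implicit.
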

\begin{proof}
    The first part follows from Theorem \ref{thm:Verma} since both $\mathcal{M}(V, U)$ and $\overline{M}(U)$ satisfy the same universal property in the category of weak $g$-twisted modules. For the second part, note that $\mathcal{A}_{g, g, n, m}(V)\otimes_{A_{g, m}(V)} U=A_{g, n, m}(V)\otimes_{A_{g, m}(V)} U$ since they are both the degree $n$ piece of the generalized Verma type $g$-twisted module generated by $U$. Now take $U=A_{g, m}(V)$, we get $\mathcal{A}_{g, g, n, m}(V)=A_{g, n, m}(V)$.
\end{proof}

\section{Tensor product of twisted modules and the twisted fusion rules theorem}\label{sec:tensor-product}
Let $g_1, g_2$ be two commuting automorphisms of $V$ such that $g_1^T=g_2^T=1$ for some $T\in \N$, $M^1$ be a $g_1$-twisted module, and $M^2$ be a $g_2$-twisted module. In this section, we give a construction of tensor product of $M^1$ and $M^2$ using bimodules developed in previous sections. Since we assume $V$ to be $g$-rational for any finitely ordered automorphism $g$, without loss of generality, we may assume that $M^1$ and $M^2$ are irreducible with conformal weights $h_1$ and $h_2$, respectively.

Let $g_3=g_1g_2$ and $\mathcal{A}_{g_3, g_2,n, m}(M^1)$ be the $A_{g_3, n}(V)$-$A_{g_2, m}(V)$-bimodule associated to the quadruple $\left(M^1, g_1, g_2, g_3\right)$ for $n, m\in \frac{1}{T}\N$. Fix an $m\in \frac{1}{T}\N$. Consider the $g_3$-twisted module \[\mathcal{M}\left(M^1, M^2(m)\right)=\oplus_{n\in \frac{1}{T}\N}\mathcal{A}_{g_3, g_2, n, m}(M^1)\otimes_{A_{g_2, m}(V)} M^2(m).\]
We denote it by $\M$ when there is no confusion. Here $M^2(m)=\{v_2\in M^2\mid L_{(0)}v_2=(h_2+m)v_2\}$. Since $V$ is $g_2$-rational, $M^2$ is a Verma type $g_2$-twisted module, and we have $M^2=\oplus_{n\in \frac{1}{T}\N}\A_{g_2, g_2, n, m}(V)\otimes_{A_{g_2, m}(V)} M^2(m)$.

For any $g\in \text{Aut}(V)$ such that $g^T=1$ and an admissible $g$-twisted $V$-module $M=\oplus_{n\in \frac{1}{T}\N}M(n)$, define the $m$-th radical of $M$ to be the maximal submodule $W$ of $M$ such that $W\cap M(m)=0$ and denote it by $\text{Rad}_{m}(M)$. Let 
\begin{equation}
    T_{m}(M^1, M^2)=\M/\text{Rad}_{m}(\M).
\end{equation}
Since $V$ is $g_3$-rational, $T_{m}(M^1, M^2)$ is isomorphic to the submodule of $\M$ generated by the $m$-th component $\mathcal{A}_{g_3, g_2, m, m}(M^1)\otimes_{A_{g_2, m}(V)} M^2(m)$. Assume 
\begin{equation}
    \mathcal{A}_{g_3, g_2, m, m}(M^1)\otimes_{A_{g_2, m}(V)} M^2(m)=\oplus_{i\in \Lambda}c_iU_{i},
\end{equation}
where $U_{i}$ are irreducible $A_{g_3, m}(V)$-modules for $i\in \Lambda$, $\Lambda$ is a finite index set, and $c_i\in \N$. Assume further $L_{(0)}|_{U_i}=\lambda_i+m$. Let $T_{m}^{\lambda_i}(M^1, M^2)$ be the submodule of $T_{m}(M^1, M^2)$ generated by $c_iU_{i}$. Then $T_{m}(M^1, M^2)=\oplus_{i\in \Lambda}T_{m}^{\lambda_i}(M^1, M^2)$. Each $T_{m}^{\lambda_i}(M^1, M^2)$ has a $\frac{1}{T}\N$-grading 
\begin{equation}
    T_{m}^{\lambda_i}(M^1, M^2)=\oplus_{n\in \frac{1}{T}\N}T_{m}^{\lambda_i}(M^1, M^2)_{\lambda_i+n},
\end{equation} 
where 
\begin{equation}
    T_{m}^{\lambda_i}(M^1, M^2)_{\lambda_i+n}=\{w\in T_{m}^{\lambda_i}(M^1, M^2)|L_{(0)}w=(\lambda_i+n)w\}
\end{equation}
for $n\in \frac{1}{T}\N$. Set $T_{m}^{\lambda_i}(M^1, M^2)(n)=T_{m}^{\lambda_i}(M^1, M^2)_{\lambda_i+n}$. Then
$T_{m}(M^1, M^2)$ has a $\frac{1}{T}\N$-grading 
\begin{equation}
    T_{m}(M^1, M^2)=\oplus_{n\in \frac{1}{T}\N}T_{m}(M^1, M^2)(n),
\end{equation}
where 
\begin{equation}
    T_{m}(M^1, M^2)(n)=\oplus_{i\in \Lambda}T_{m}^{\lambda_i}(M^1, M^2)(n).
\end{equation}
It is easy to see that 
\begin{equation}
    T_{m}(M^1, M^2)(n)\subseteq \M(M^1, M^2(m))(n)=\A_{g_3, g_2, n, m}(M^1)\otimes_{A_{g_2, m}(V)} M^2(m)
\end{equation}
by the module structure of $\M(M^1, M^2(m))$ given by \eqref{eq:module-structure-of-M}.

We now show that there is a twisted intertwining operator $F(-, z)$ among $M^1$, $M^2$ and $T(M^1, M^2)$. For this purpose, for $v\in M^1$ homogeneous, $n, p\in \frac{1}{T}\N$, $b\in \A_{g_2, g_2, p, m}(V)$, and $u\in M^2(m)$, we define $v_{(n)}\colon M^2(p)\rightarrow \M(p+\deg v-n-1)$ by
\begin{equation}
    v_{(n)}(b\otimes u)=(v\underline{\ast}_{g_2, m, p}^{p+\deg v-n-1} b)\otimes u.
\end{equation}
Similar to the proof of Lemma \ref{lem:well-definedness-of-operator}, one can show that $v_{(n)}$ is well-defined. Moreover, $\M$ is spanned by the set $\{v_{(n)}u\mid v\in M^1, u\in M^2(m), n\in\frac{1}{T}\N\}$ since for any homogeneous $v\in M^1$ and $\vac\otimes u\in \A_{g_2, g_2, m, m}(V)\otimes M^2(m)$, we have $v_{(\deg v-1-n)}(\vac\otimes u)=(v\underline{\ast}_{g_2, m, m}^{n}\vac)\otimes u=v\otimes u\in \M(n)$. Next, we show 
\begin{equation}
    F^\circ(v, z)\colon =\sum_{n\in \frac{1}{T}\Z}v_{(n)}z^{-n-1}\in \Hom(M^2, \M)[[z^{\pm\frac{1}{T}}]]
\end{equation} 
satisfies the generalized Jacobi identity. It suffices to show that $F^{\circ}(-, z)$ satisfies the associativity \eqref{eq:associativity-1} and the commutativity \eqref{eq:commutativity}.
The following lemma will be used later. 
\begin{lemma}\label{lem:combinatorics-identity}
    Let $l\in \N$, $e, f\in \C$. Then we have
    \begin{equation}\label{eq:combinatorics-for-associativity}
        \sum_{i=0}^{l}\sum_{j=0}^{l-i}\binom{e+i}{i}(-1)^j\binom{f+i+j}{j}z^{-i-j}=\sum_{p=0}^{l}\binom{e-f}{p}z^{-p},
    \end{equation}
    and 
    \begin{equation}\label{eq:combinatorics-for-commutativity}
        \sum_{i=0}^{l}\sum_{j=0}^{l-i}\binom{e}{i}\binom{f+i+j}{j}\big(\frac{1+z}{z}\big)^{i+j}=\sum_{p=0}^{l}\binom{e+f+p}{p}\big(\frac{1+z}{z}\big)^{p}.
    \end{equation}
\end{lemma}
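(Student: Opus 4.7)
The plan is to reindex both double sums by the diagonal variable $p=i+j$, thereby reducing each to a single sum over $p$ whose coefficients can be matched directly against the right-hand side. In both cases, the inner sum that emerges is an instance of the classical Vandermonde convolution $\sum_{i=0}^{p}\binom{\alpha}{i}\binom{\beta}{p-i}=\binom{\alpha+\beta}{p}$, in one of the two cases after a single application of the standard upper-index negation $\binom{\alpha}{k}=(-1)^{k}\binom{k-\alpha-1}{k}$.

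For \eqref{eq:combinatorics-for-commutativity}, setting $w=\tfrac{1+z}{z}$ and grouping terms with $i+j=p$ fixed rewrites the left-hand side as
\[
\sum_{p=0}^{l}\Biggl(\sum_{i=0}^{p}\binom{e}{i}\binom{f+p}{p-i}\Biggr)w^{p}.
\]
The inner sum equals $\binom{e+f+p}{p}$ by Vandermonde's convolution, which matches the right-hand side of \eqref{eq:combinatorics-for-commutativity}.

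For \eqref{eq:combinatorics-for-associativity}, I would first rewrite $\binom{e+i}{i}=(-1)^{i}\binom{-e-1}{i}$. After combining signs and grouping by $p=i+j$, the left-hand side becomes
\[
\sum_{p=0}^{l}(-1)^{p}\Biggl(\sum_{i=0}^{p}\binom{-e-1}{i}\binom{f+p}{p-i}\Biggr)z^{-p}.
\]
Vandermonde gives $\binom{f-e-1+p}{p}$ for the inner sum, and a second use of the upper-index negation, now in the form $(-1)^{p}\binom{f-e-1+p}{p}=\binom{e-f}{p}$, yields the right-hand side. There is essentially no obstacle; the only bookkeeping point is to confirm that the index set $\{(i,j):i,j\ge 0,\,i+j\le l\}$ corresponds bijectively to $\{(p,i):0\le i\le p\le l\}$ under $p=i+j$, which is immediate. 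Alternatively, one could read off coefficients of $x^{p}$ in the formal power series identities $(1-x)^{-e-1}(1-x)^{f}=(1-x)^{f-e-1}$ and $(1-x)^{-e}(1-x)^{-f-1}=(1-x)^{-e-f-1}$, but the Vandermonde approach above is more direct.
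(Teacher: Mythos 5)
Your proof is correct, and it takes a genuinely different (and cleaner) route than the paper's. The paper proves \eqref{eq:combinatorics-for-associativity} by induction on $l$: it isolates the terms that enter when passing from $l=n$ to $l=n+1$, packages them as a single residue $\Res_{z_0}z_0^{-n-2}(1+z_0)^{-e-1}(1+z_0)^{f+n+1}$ (itself a Vandermonde sum in generating-function disguise), evaluates that residue, and telescopes; it then dispatches \eqref{eq:combinatorics-for-commutativity} with ``can be proved similarly.'' You avoid the induction entirely by reindexing along the antidiagonal $p=i+j$ at the outset and invoking Vandermonde's convolution once per power of $z^{-1}$ (resp.\ $w=\tfrac{1+z}{z}$), which uses the same core ingredient but organizes it more transparently and treats both identities in exactly parallel fashion. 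One small caveat on your closing parenthetical: the fixed Cauchy-product identity $(1-x)^{-e-1}(1-x)^{f}=(1-x)^{f-e-1}$ does not directly yield the inner sum, because the second factor in the original sum is $\binom{f+i+j}{j}=\binom{f+p}{p-i}$, whose upper index varies with $p$; to extract the coefficient one needs the $p$-dependent product $(1+x)^{-e-1}(1+x)^{f+p}$, which is exactly what your explicit Vandermonde computation carries out. This aside does not affect the validity of your main argument.
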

\begin{proof}
    We prove \eqref{eq:combinatorics-for-associativity}, \eqref{eq:combinatorics-for-commutativity} can be proved similarly. Let \[f(l)=\sum_{i=0}^{l}\sum_{j=0}^{l-i}\binom{e+i}{i}(-1)^j\binom{f+i+j}{j}z^{-i-j}.\] Note that $f(0)=1$. We use induction on $l$. 

    When $l=0$, both sides of \eqref{eq:combinatorics-for-associativity} are equal to 1. Assume \eqref{eq:combinatorics-for-associativity} holds for $l=n$. Then for $l=n+1$, we have 
    \begin{align*}
        \MoveEqLeft
        f(n+1)\\
        &=\sum_{i=0}^{n}\sum_{j=0}^{n+1-i}\binom{e+i}{i}(-1)^j\binom{f+i+j}{j}z^{-i-j}+\binom{e+n+1}{n+1}z^{-n-1}\\
        &=f(n)+\sum_{i=0}^{n}\binom{e+i}{i}(-1)^{n+1-i}\binom{f+n+1}{n+1-i}z^{-n-1}+\binom{e+n+1}{n+1}z^{-n-1}\\
        &=f(n)+\sum_{i=0}^{n+1}\binom{e+i}{i}(-1)^{n+1-i}\binom{f+n+1}{n+1-i}z^{-n-1}\\
        &=f(n)+(-1)^{n+1}z^{-n-1}\Res_{z_0}z_0^{-n-2}(1+z_0)^{-e-1}(1+z_0)^{f+n+1}\\
        &=f(n)+(-1)^{n+1}\binom{f-e+n}{n+1}z^{-n-1}
    \end{align*}
    Thus, 
    \begin{align*}
        \MoveEqLeft
        f(l)=f(0)+\sum_{p=0}^{l-1}(-1)^{p+1}\binom{f-e+p}{p+1}z^{-p-1}
        =\sum_{p=0}^{l}(-1)^p\binom{f-e-1+p}{p}z^{-p}\\
        &=\sum_{p=0}^{l}\binom{e-f}{p}z^{-p}.
    \end{align*}
\end{proof}
\begin{proposition}\label{prop:associativity-for-F^circ}
    For $v\in M^1$ homogeneous, $a\in V^{(j_1, j_2)}$ homogeneous, and $v_2\in M^2(p)$ with $p\in \frac{1}{T}\N$, we have
\begin{equation}\label{eq:associativity-for-F^circ}
    \begin{aligned}
        \MoveEqLeft
        z_2^{\deg v-q}(z_0+z_2)^{\wt a+q}Y_{\M}(a, z_0+z_2)F^\circ(v, z_2)v_2\\
        &=z_2^{\deg v-q}(z_2+z_0)^{\wt a+q}F^{\circ}(Y_{M^1}(a, z_0)v, z_2)v_2,
    \end{aligned}
\end{equation}
where $q=\la(p, j_2)$.
\end{proposition}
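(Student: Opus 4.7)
The plan is to expand both sides as formal series in $z_0$ and $z_2$, match coefficients, and reduce the identity to the $\bar\ast$--$\underline{\ast}$ associator of Proposition \ref{prop:subsets-of-O}(3) together with the explicit residue formula \eqref{eq:left-composition} expressing $\bar\ast$ in terms of $Y_{M^1}$. Since $V$ is $g_2$-rational, $M^2$ is a generalized Verma module, so $M^2(p)$ is spanned by elements of the form $b \otimes u$ with $b \in \mathcal{A}_{g_2, g_2, p, m}(V)$ and $u \in M^2(m)$, and without loss of generality we may take $v_2 = b \otimes u$. The prefactor $(z_0+z_2)^{\wt a + q}$ with $q = \lambda(p, j_2)$ is exactly the minimal power needed for $(z_0+z_2)^{\wt a + q} Y_{\mathcal{M}}(a, z_0+z_2)$ to expand with only integer powers of $z_0$ on the $\mathcal{M}(p)$--component (cf.\ Remark \ref{rmk:choice-of-powers}), while $z_2^{\deg v - q}$ absorbs the fractional shift in $z_2$ produced by $F^\circ$ acting on $M^2(p)$.

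Unwinding the definitions of $F^\circ$ and $Y_{\mathcal{M}}$ (see \eqref{eq:module-structure-of-M}), each coefficient on the left-hand side is of the form
\[
\bigl[\,a\,\bar\ast_{g_3, m, s}^{\,t}\,(v\,\underline{\ast}_{g_2, m, p}^{\,s}\,b)\,\bigr]\otimes u,
\]
indexed by $s, t \in \frac{1}{T}\N$. Applying Proposition \ref{prop:subsets-of-O}(3) with $p_1 = p$, $p_2 = s$, $n = t$ gives, modulo $\mathcal{O}_{g_3, g_2, t, m}(M^1)$ (and hence identically after tensoring with $u$),
\[
a\,\bar\ast_{g_3, m, s}^{\,t}\,(v\,\underline{\ast}_{g_2, m, p}^{\,s}\,b)\;=\;(a\,\bar\ast_{g_3, p, s}^{\,t}\,v)\,\underline{\ast}_{g_2, m, p}^{\,t}\,b.
\]
Substituting the residue formula \eqref{eq:left-composition} for $a\,\bar\ast_{g_3, p, s}^{\,t}\,v$ and resumming over $s$ and $t$, the generating series takes the shape of $\underline{\ast}_{g_2, m, p}^{\,t}$ applied to a residue of $Y_{M^1}((1+z_0)^{L_{(0)}}a, z_0)v$ against a rational function of $z_0$. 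Interpreting this resum as $F^\circ$ evaluated at the appropriate argument identifies the result with $z_2^{\deg v - q}(z_2+z_0)^{\wt a + q} F^\circ(Y_{M^1}(a, z_0)v, z_2)(b\otimes u)$, which is the right-hand side.

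The main obstacle is the combinatorial bookkeeping. One must verify that the rational factors $(1+z_0)^{\lambda(p, j_2)}/z_0^{\lambda(p, j_2)+t-s+i+1}$ summed against the binomial coefficients produced by the expansion of $\bar\ast_{g_3, p, s}^{\,t}$ collapse, via an identity in the spirit of \eqref{eq:combinatorics-for-associativity} of Lemma \ref{lem:combinatorics-identity}, into the single generating function $(z_2+z_0)^{\wt a + q}$ on the right. A secondary technical check is that the various $\delta$-parity conditions imposed by $\bar\ast$ (namely $\overline{\widetilde{t}-\widetilde{s}-j_1-j_2}=0$) and by $\underline{\ast}$ (namely $\overline{\widetilde{s}-\widetilde{p}-j_2}=0$) combine consistently, and that they exactly match the integrality condition on the $z_0$-powers produced by expanding $(z_2+z_0)^{\wt a + q}$. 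Finally, one should note that replacing $Y_{M^1}((1+z_0)^{L_{(0)}}a, z_0)$ by $Y_{M^1}(a, z_0)$ under the residue is justified by the standard conjugation formula $(1+z_0)^{L_{(0)}}$ acting on $a$, which supplies precisely the $(1+z_0)^{\wt a}$ factor needed to convert between the two sides.
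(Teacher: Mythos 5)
Your proposal is correct and follows essentially the same route as the paper's own proof: write $v_2=b\otimes u$, expand both sides via \eqref{eq:module-structure-of-M} and the definition of $F^\circ$, invoke the associator identity of Proposition~\ref{prop:subsets-of-O} to pass from $a\bar\ast(v\underline\ast b)$ to $(a\bar\ast v)\underline\ast b$, and match $z_0^x z_2^y$ coefficients, reducing to the combinatorial identity \eqref{eq:combinatorics-for-associativity} of Lemma~\ref{lem:combinatorics-identity} (with $e=f=x$, $l=\lf p+y+q\rf$). Your closing remark about $(1+z_0)^{L_{(0)}}a$ is not really a conjugation formula but simply the observation that for homogeneous $a$ it equals $(1+z_0)^{\wt a}a$, which is how the exponent $\wt a+q$ appears in the residue; the paper uses this implicitly throughout and it is not an extra step.
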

\begin{proof}
    Let $v_2=b\otimes u$ for some $b\in \A_{g_2, g_2, p, m}(V)$ and $u\in M^2(m)$ and $j_3^{\vee}=\overline{j_1-j_2}$.
    \begin{align*}
        \MoveEqLeft
        \text{The left-hand side of \eqref{eq:associativity-for-F^circ}}\\
        &=z_2^{\deg v-q}(z_0+z_2)^{\wt a+q}Y_{\M}(a, z_0+z_2)F^\circ(v, z_2)(b\otimes u)\\
        &=\sum_{p_1\in \frac{j_1+j_2}{T}+\Z}\sum_{n_1\in \frac{1}{T}\N}(z_0+z_2)^{\wt a+q-p_1-1}z_2^{\deg v-q-n_1-1}a_{(p_1)}v_{(n_1)}(b\otimes u)\\
        &=\sum_{p_1\in \frac{j_1+j_2}{T}+\Z}\sum_{n_1\in \frac{1}{T}\N}\sum_{i\geq 0}\binom{\wt a+q-p_1-1}{i}z_0^{\wt a+q-p_1-1-i}z_2^{\deg v-q+i-n_1-1}\\
        &\hspace{1.5cm} \times \left(a\bar{\ast}_{g_3, m, p+\deg v-n_1-1}^{p+\deg v-n_1-1+\wt a-p_1-1}(v\underline{\ast}_{g_2, m, p}^{p+\deg v-n_1-1}b)\right)\otimes u\\
        &=\sum_{p_1\in \frac{j_1+j_2}{T}+\Z}\sum_{n_1\in \frac{1}{T}\N}\sum_{i\geq 0}\binom{\wt a+q-p_1-1}{i}z_0^{\wt a+q-p_1-1-i}z_2^{\deg v-q+i-n_1-1}\\
        &\hspace{1.5cm}\times \left((a\bar{\ast}_{g_3, p, p+\deg v-n_1-1}^{p+\deg v-n_1-1+\wt a-p_1-1}v)\underline{\ast}_{g_2, m, p}^{m+\deg v-n_1-1+\wt a-p_1-1}b\right)\otimes u.
    \end{align*}
    \begin{align*}
        \MoveEqLeft
        \text{The right-hand side of \eqref{eq:associativity-for-F^circ}}\\
        &=z_2^{\deg v-q}(z_2+z_0)^{\wt a+q}I^{\circ}\big(Y_{M^1}(a, z_0)v, z_2\big)(b\otimes u)\\
        &=\sum_{p_2\in\frac{j_1}{T}+\Z}\sum_{n_2\in \frac{1}{T}\Z}(z_2+z_0)^{\wt a+q}z_0^{-p_2-1}z_2^{\deg v-q-n_2-1}(a_{(p_2)}v)_{(n_2)}(b\otimes u)\\
        &=\sum_{p_2\in\frac{j_1}{T}+\Z}\sum_{n_2\in \frac{1}{T}\Z}\sum_{j\geq 0}\binom{\wt a+q}{j}z_0^{j-p_2-1}z_2^{\wt a+\deg v-j-n_2-1}\\
        &\hspace{1.5cm}\times (a_{(p_2)}v\underline{\ast}_{g_2, m, p}^{p+\deg v+\wt a-p_2-1-n_2-1}b)\otimes u.
    \end{align*}
    We compare the coefficients of $z_0^xz_2^y$ of both sides for $x\in -\frac{j_1}{T}+\Z$, $y\in \frac{1}{T}\Z$. Set \[\wt a+q-p_1-1-i=j-p_2-1=x,\] \[\deg v-q+i-n_1-1=\wt a+\deg v-j-n_2-1=y.\] Then the corresponding coefficients are 
    \begin{equation}
        \text{LHS}=\sum_{i\geq 0}\binom{x+i}{i}\left((a\bar{\ast}_{g_3, p, p+y+q-i}^{p+x+y}v)\underline{\ast}_{g_2, m, p}^{p+x+y}b\right)\otimes u,
    \end{equation}
    and 
    \begin{equation}
        \text{RHS}=\left(\Res_{z}\frac{(1+z)^{\wt a+q}}{z^{x+1}}Y_{M^1}(a, z)v\underline{\ast}_{g_2, m, p}^{p+x+y}b\right)\otimes u.
    \end{equation}
    Since 
    \begin{equation}
        \begin{aligned}
            \MoveEqLeft
            a\bar{\ast}_{g_3, p, p+y+q-i}^{p+x+y}v\\
            &=\sum_{j=0}^{\lfloor p+y+q-i\rfloor}(-1)^j\binom{x+i+j}{j}
            \Res_{z}\frac{(1+z)^{\wt a+q}}{z^{x+i+j+1}}Y_{M^1}(a, z)v
        \end{aligned}
    \end{equation}
    We may assume $\deg v-n_1-1+p\geq 0$, otherwise $v_{(n_1)}(b\otimes u)=0$. This implies that $p+y+q-i=p+\deg v-n_1-1\geq 0$. So, it suffices to show 
    \begin{equation}
        \sum_{i=0}^{\lfloor p+y+q\rfloor}\binom{x+i}{i}\sum_{j=0}^{\lfloor p+y+q-i\rfloor}(-1)^j\binom{x+i+j}{j}\frac{1}{z^{i+j}}=1.
    \end{equation}
    But this follows from \eqref{eq:combinatorics-for-associativity} by letting $l=\lfloor p+y+q\rfloor$, $e=f=x$. 
\end{proof}
Multiplying both sides of \eqref{eq:associativity-for-F^circ} by $z_2^{q-\deg v}$ gives the associativity for $F^{\circ}(-, z)$.
\begin{proposition}\label{prop:commutativity-for-F^circ}
    For $v\in M^1$ homogeneous, $a\in V^{(j_1, j_2)}$ homogeneous, $p\in \frac{1}{T}\N$, and $v_2\in M^2(p)$, we have
    \begin{equation}\label{eq:commutativity-for-F^circ}
     \begin{aligned}
        \MoveEqLeft
        z_2^{\deg v-q_1}(z_1-z_2)^{\wt a+q_1}Y_{\mathcal{M}}(a, z_1)F^{\circ}(v, z_2)v_2\\
        &=z_2^{\deg v-q_1}(-z_2+z_1)^{\wt a+q_1}F^{\circ}(v, z_2)Y_{M^2}(a, z_1)v_2,
    \end{aligned}
\end{equation}
where $q_1=\la(\deg v, j_1)$. 
\end{proposition}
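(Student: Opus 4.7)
The plan is to mirror the strategy of Proposition~\ref{prop:associativity-for-F^circ}: expand both sides of \eqref{eq:commutativity-for-F^circ} as formal series in $z_1, z_2$, match coefficients of $z_1^\alpha z_2^\beta$, and reduce the resulting equality to the combinatorial identity \eqref{eq:combinatorics-for-commutativity} of Lemma~\ref{lem:combinatorics-identity}. First write $v_2 = b \otimes u$ with $b \in \A_{g_2, g_2, p, m}(V)$ and $u \in M^2(m)$, using the Verma realization of $M^2$ guaranteed by Theorem~\ref{thm:Verma}.

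On the left-hand side, $F^\circ(v, z_2)$ sends $b \otimes u$ to $(v \underline{\ast}_{g_2, m, p}^{\cdot} b) \otimes u$, and $Y_{\M}(a, z_1)$ then acts by \eqref{eq:module-structure-of-M} as left-composition with $a \bar{\ast}_{g_3, m, \cdot}^{\cdot}$. Use the third relation of Proposition~\ref{prop:subsets-of-O} to rewrite each summand as $\bigl((a \bar{\ast}_{g_3, p, p + \deg v - n_1 - 1}^{N} v) \underline{\ast}_{g_2, m, p}^{N} b\bigr) \otimes u$ modulo $\mathcal{O}_{g_3, g_2, N, m}(M^1)$, so the $a$-action sits directly on $v$. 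On the right-hand side, $Y_{M^2}(a, z_1)$ acts on $b \otimes u$ by the Verma structure as $a \bar{\ast}_{g_2, m, p}^{\cdot} b$, and $F^\circ(v, z_2)$ then composes via $v \underline{\ast}_{g_2, m, \cdot}^{\cdot}$. After multiplying by $z_2^{\deg v - q_1}$ and expanding both $(z_1 - z_2)^{\wt a + q_1}$ and $(-z_2 + z_1)^{\wt a + q_1}$ as $\sum_i (-1)^i \binom{\wt a + q_1}{i} z_1^{\wt a + q_1 - i} z_2^i$, the coefficient of $z_1^\alpha z_2^\beta$ on either side sits in $\A_{g_3, g_2, N, m}(M^1) \otimes M^2(m)$ for a common $N = p + \alpha + \beta$.

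To verify equality of these coefficients in the quotient, unravel each $\bar{\ast}$ and $\underline{\ast}$ via \eqref{eq:left-composition} and \eqref{eq:right-composition}, expressing every twisted product as a $\Res_z$-integral of $Y_{M^1}(a, z) v$ (for the LHS) or $Y(a, z) b$ (for the RHS) against a rational kernel. Swapping the summation coming from the expansion of the binomial $(z_1 - z_2)^{\wt a + q_1}$ against the $\lfloor \cdot \rfloor$-cutoff in the bimodule product, both sides should collapse to an instance of \eqref{eq:combinatorics-for-commutativity} with parameters $e, f$ read off from $\la(p, j_2)$, $\la(N, j_3^\vee)$, $\wt a$, and $q_1$.

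The main obstacle is, as in Proposition~\ref{prop:associativity-for-F^circ}, the detailed bookkeeping among the many indices ($p_1, p_2, n_1, n_2$, together with auxiliary summation variables), the integrality constraints such as $\overline{\widetilde{p} - \widetilde{n} - j_1 - j_2} = 0$, and the $\lfloor \cdot \rfloor$-cutoffs. A subtlety absent from the associativity proof is that the LHS ultimately produces $(a \bar{\ast}_{g_3} v) \underline{\ast}_{g_2} b$ (with $a$ acting on $v$), while the RHS produces $v \underline{\ast}_{g_2}(a \bar{\ast}_{g_2} b)$ (with $a$ acting on $b$); bridging these two types of composite products ultimately invokes the Jacobi identity for $Y_{M^1}$, which is encoded in \eqref{eq:combinatorics-for-commutativity} via the rearrangement of iterated residue integrals.
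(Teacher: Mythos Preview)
Your outline follows the right template but contains two genuine gaps that would prevent the proof from going through.

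First, you write that both $(z_1-z_2)^{\wt a+q_1}$ and $(-z_2+z_1)^{\wt a+q_1}$ expand as $\sum_i (-1)^i\binom{\wt a+q_1}{i}z_1^{\wt a+q_1-i}z_2^i$. This is false: since $\wt a+q_1\in\Z+\tfrac{j_1}{T}$ is typically not a nonnegative integer, the two binomial series differ. The paper expands the first in nonnegative powers of $z_2$ and the second in nonnegative powers of $z_1$, obtaining on the right the sign pattern $(-1)^{\wt a+q_1-j}$. If the two expansions agreed, commutativity would be trivial and there would be nothing to prove; the whole content of the proposition is that the difference between these two expansions is killed. Relatedly, you leave the right-hand side in the form $v\underline{\ast}_{g_2}(a\bar{\ast}_{g_2}b)$; the paper instead uses Proposition~\ref{prop:subsets-of-O} once more (together with Remark~\ref{rmk:connection-with-DJ}) to rewrite it as $(v\underline{\ast}_{g_2,p,\cdot}^{\,\cdot}a)\underline{\ast}_{g_2,m,p}^{\,\cdot}b$, so that both sides are compared as elements of $\A_{g_3,g_2,\cdot,p}(M^1)$ acting on the same $b$.

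Second, the reduction is not to \eqref{eq:combinatorics-for-commutativity} alone. The paper simplifies the left coefficient with \eqref{eq:combinatorics-for-associativity} and the right coefficient with \eqref{eq:combinatorics-for-commutativity}, then invokes Lemma~\ref{lem:for-comutativity-of-F} and \cite[Proposition~5.1]{DJ08a} to show that the difference of the two simplified expressions equals $\Res_z\frac{(1+z)^{-x-1}}{z^{-q_1-\wt a}}Y_{M^1}(a,z)v$. The vanishing of this residue comes not from any Jacobi identity but from the truncation $z^{\wt a+q_1}Y_{M^1}(a,z)v\in M^1[[z]]$, which is exactly where the specific choice $q_1=\la(\deg v,j_1)$ enters. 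Your final sentence, attributing the bridge to ``the Jacobi identity for $Y_{M^1}$ encoded in \eqref{eq:combinatorics-for-commutativity}'', misidentifies the mechanism: \eqref{eq:combinatorics-for-commutativity} is a pure binomial identity with no vertex-algebra content, and the actual input is locality of $a$ against $v$ in $M^1$.
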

\begin{proof}
    Let $v_2=b\otimes u$ for some $b\in \A_{g_2, g_2, p, m}(V)$ and $u\in M^2(m)$. 
    \begin{align*}
        \MoveEqLeft
        \text{The left-hand side of \eqref{eq:commutativity-for-F^circ}}\\
        &=z_2^{\deg v-q_1}(z_1-z_2)^{\wt a+q_1}Y_{\mathcal{M}}(a, z_1)F^{\circ}(v, z_2)(b\otimes u)\\
        &=\sum_{p_1\in \frac{j_1+j_2}{T}+\Z}\sum_{n_1\in \frac{1}{T}\Z}(z_1-z_2)^{\wt a+q_1}z_1^{-p_1-1}z_2^{\deg v-q_1-n_1-1}a_{(p_1)}v_{(n_1)}(b\otimes u)\\
        &=\sum_{p_1\in \frac{j_1+j_2}{T}+\Z}\sum_{n_1\in \frac{1}{T}\Z}\sum_{i\geq 0}\binom{\wt a+q_1}{i}(-1)^iz_1^{\wt a+q_1-i-p_1-1}z_2^{i+\deg v-q_1-n_1-1}\\
        &\hspace{1.5cm}\times \left(a\bar{\ast}_{g_3, m, p+\deg v-n_1-1}^{p+\deg v-n_1-1+\wt a-p_1-1}(v\underline{\ast}_{g_2, m, p}^{p+\deg v-n_1-1}b)\right)\otimes u\\
        &=\sum_{p_1\in \frac{j_1+j_2}{T}+\Z}\sum_{n_1\in \frac{1}{T}\Z}\sum_{i\geq 0}\binom{\wt a+q_1}{i}(-1)^iz_1^{\wt a+q_1-i-p_1-1}z_2^{i+\deg v-q_1-n_1-1}\\
        &\hspace{1.5cm}\times \left((a\bar{\ast}_{g_3, p, p+\deg v-n_1-1}^{p+\deg v-n_1-1+\wt a-p_1-1}v)\underline{\ast}_{g_2, m, p}^{p+\deg v-n_1-1+\wt a-p_1-1}b\right)\otimes u.
    \end{align*}
    \begin{align*}
        \MoveEqLeft
        \text{The right-hand side of \eqref{eq:commutativity-for-F^circ}}\\
        &=z_2^{\deg v-q_1}(-z_2+z_1)^{\wt a+q_1}F^{\circ}(v, z_2)Y_{M^2}(a, z_1)(b\otimes u)\\
        &=\sum_{n_2\in \frac{1}{T}\Z}\sum_{p_2\in \frac{j_2}{T}+\Z}(-z_2+z_1)^{\wt a+q_1}z_1^{-p_2-1}z_2^{\deg v-q_1-n_2-1}v_{(n_2)}a_{(p_2)}(b\otimes u)\\
        &=\sum_{n_2\in \frac{1}{T}\Z}\sum_{p_2\in \frac{j_2}{T}+\Z}\sum_{j\geq 0}\binom{\wt a+q_1}{j}(-1)^{\wt a+q_1-j}z_1^{j-p_2-1}z_2^{\wt a+\deg v-j-n_2-1}\\
        &\hspace{1.5cm}\times \left(v\underline{\ast}_{g_2, m, p+\wt a-p_2-1}^{p+\wt a-p_2-1+\deg v-n_2-1}(a\bar{\ast}_{g_2, m, p}^{p+\wt a-p_2-1}b)\right)\otimes u\\
        &=\sum_{n_2\in \frac{1}{T}\Z}\sum_{p_2\in \frac{j_2}{T}+\Z}\sum_{j\geq 0}\binom{\wt a+q_1}{j}(-1)^{\wt a+q_1-j}z_1^{j-p_2-1}z_2^{\wt a+\deg v-j-n_2-1}\\
        &\hspace{1.5cm}\times \left((v\underline{\ast}_{g_2, p, p+\wt a-p_2-1}^{p+\wt a-p_2-1+\deg v-n_2-1}a)\underline{\ast}_{g_2, m, p}^{p+\wt a-p_2-1+\deg v-n_2-1}b\right)\otimes u.
    \end{align*}
    Set
    \[\wt a+q_1-i-p_1-1=j-p_2-1=x,\]
    \[\deg v-q_1+i-n_1-1=\wt a+\deg v-j-n_2-1=y.\]
    The coefficient of $z_1^xz_2^y$ on the left-hand side of \eqref{eq:commutativity-for-F^circ} ($x\in -\frac{j_2}{T}+\Z$, $y\in \frac{1}{T}\Z$) is
    \begin{equation}\label{eq:comm-LHS}
        \sum_{i\geq 0}\binom{\wt a+q_1}{i}(-1)^i\big((a\bar{\ast}_{g_3, p, p+y+q_1-i}^{p+x+y}v)\underline{\ast}_{g_2, m, p}^{p+x+y}b\big)\otimes u,
    \end{equation}
    and the coefficient of $z_1^xz_2^y$ on the right-hand side of \eqref{eq:commutativity-for-F^circ} is
    \begin{equation}\label{eq:comm-RHS}
        \sum_{i\geq 0}\binom{\wt a+q_1}{i}(-1)^{\wt a+q_1-i}\left((v\underline{\ast}_{g_2, p, p+\wt a+x-i}^{p+x+y}a)\underline{\ast}_{g_2, m, p}^{p+x+y}b\right)\otimes u.
    \end{equation}
    Let $q=\la(p, j_2)$. 
    Using \eqref{eq:combinatorics-for-associativity}, we have
    \begin{align*}
        &\sum_{i\geq 0}\binom{\wt a+q_1}{i}(-1)^ia\bar{\ast}_{g_3, p, p+y+q_1-i}^{p+x+y}v\\
        &=\sum_{i=0}^{\lfloor p+y+q_1\rfloor}\sum_{j=0}^{\lfloor p+y+q_1-i\rfloor}\binom{\wt a+q_1}{i}(-1)^i(-1)^j\binom{x+q-q_1+i+j}{j}\\
        &\hspace{1.5cm}\times \Res_{z}\frac{(1+z)^{\wt a+q}}{z^{x+q-q_1+i+j+1}}Y_{M^1}(a, z)v\\
        &=\sum_{i=0}^{\lfloor p+y+q_1\rfloor}\sum_{j=0}^{\lfloor p+y+q_1-i\rfloor}\binom{-\wt a-q_1-1+i}{i}(-1)^j\binom{x+q-q_1+i+j}{j}\\
        &\hspace{1.5cm}\times \Res_{z}\frac{(1+z)^{\wt a+q}}{z^{x+q-q_1+i+j+1}}Y_{M^1}(a, z)v\\
        &=\sum_{i=0}^{\lfloor p+y+q_1\rfloor}\binom{-\wt a-x-q-1}{i}\Res_{z}\frac{(1+z)^{\wt a+q}}{z^{x+q-q_1+i+1}}Y_{M^1}(a, z)v.
    \end{align*}
    Since $x\in \Z-\frac{j_2}{T}$, we have
    \begin{equation}
        \lf p+\wt a+x \rf=\lf \la(p, j_2)+\wt a+x\rf=q+\wt a+x,
    \end{equation} 
    \begin{equation}
        (-1)^{q_1+x-\la(p+x+y, j_3^{\vee})-\frac{2j_1}{T}}=(-1)^{\la(p+x+y, j_3^{\vee})-x+q_1}.
    \end{equation}
    Using \eqref{eq:combinatorics-for-commutativity}, we have
    \begin{align*}
        \MoveEqLeft
        \sum_{i\geq 0}\binom{\wt a+q_1}{i}(-1)^{\wt a+q_1-i}v\underline{\ast}_{g_2, p, p+\wt a+x-i}^{p+x+y}a\\
        &=\sum_{i=0}^{\lfloor p+\wt a+x\rfloor}\sum_{j=0}^{\lfloor p+\wt a+x-i\rfloor}\binom{\wt a+q_1}{i}(-1)^{q_1+x-\la(p+x+y, j_3^{\vee})}\\
        &\hspace{1.5cm} \times \binom{\la(p+x+y, j_3^{\vee})-\wt a-x+i+j}{j}\\
        &\hspace{1.5cm} \times \Res_{z}\frac{(1+z)^{\wt a+q-\lf p+\wt a+x-i\rf +j-1}}{z^{\la(p+x+y, j_3^{\vee})-\wt a-x+i+j+1}}Y_{M^1}(g_1^{-1}a, z)v\\
        &=\Res_{z}\sum_{i=0}^{q+\wt a+x}(-1)^{\la(p+x+y, j_3^{\vee})-x+q_1}
        \binom{\la(p+x+y, j_3^{\vee})-x+q_1+i}{i}\\
        &\hspace{1.5cm} \times \Res_{z}\frac{(1+z)^{\wt a+q-\lf p+\wt a+x\rf +i-1}}{z^{\la(p+x+y, j_3^{\vee})-\wt a-x+i+1}}Y_{M^1}(a, z)v.
    \end{align*}
    Thus 
    \begin{align*}
        \MoveEqLeft
        \sum_{i\geq 0}\binom{\wt a+q_1}{i}(-1)^ia\bar{\ast}_{g_3, p, p+y+q_1-i}^{p+x+y}v-\sum_{i\geq 0}\binom{\wt a+q_1}{i}(-1)^{\wt a+q_1-i}v\underline{\ast}_{g_2, p, p+\wt a+x-i}^{p+x+y}a\\
        &=\Res_{z}\frac{(1+z)^{\wt a+q}}{z^{1-q_1-\wt a}}Y_{M^1}(a, z)v\left[\sum_{i=0}^{\lf p+y+q_1 \rf}\binom{\wt a+x+q+i}{i}(-1)^i\frac{1}{z^{\wt a+x+q+i}}-\right.\\
        &\left.\sum_{i=0}^{\wt a+x+q}(-1)^{\la(p+x+y, j_3^{\vee})-x+q_1}\binom{\la(p+x+y, j_3^{\vee})-x+q_1+i}{i}\right.\\
        &\hspace{1.5cm} \times \left. \frac{(1+z)^{i-q-\wt a-x-1}}{z^{\la(p+x+y, j_3^{\vee})+q_1-x+i}}\right].
    \end{align*}
    By Lemma \ref{lem:for-comutativity-of-F} in the Appendix, 
    \begin{equation}
        \la(p+y+x, j_3^{\vee})-x+q_1=\lf p+y+q_1 \rf.
    \end{equation}
    Let $e=\lf p+y+q_1 \rf$, $f=\wt a+x+q$. Then $e, f\in \N$. By \cite[Proposition 5.1]{DJ08a},
    \begin{align*}
        \MoveEqLeft
        \sum_{i=0}^{e}\binom{f+i}{i}(-1)^i\frac{1}{z^{f+i}}-\sum_{i=0}^{f}(-1)^e\binom{e+i}{i}\frac{(1+z)^{i-f-1}}{z^{e+i}}\\
        &=(1+z)^{-f-1}\left[\sum_{i=0}^{e}\binom{f+i}{i}(-1)^i\frac{(1+z)^{f+1}}{z^{f+i}}-\sum_{i=0}^{f}(-1)^e\binom{e+i}{i}\frac{(1+z)^{i}}{z^{e+i}}\right]\\
        &=\frac{z}{(1+z)^{f+1}}.
    \end{align*}
    Note that $z^{\wt a+q_1}Y_{M^1}(a, z)v\in M^1[[z]]$. Hence \[\eqref{eq:comm-LHS}-\eqref{eq:comm-RHS}=\left(\left(\Res_{z}\frac{(1+z)^{-x-1}}{z^{-q_1-\wt a}}Y_{M^1}(a, z)v\right)\underline{\ast}_{g_2, m, p}^{p+x+y}b\right)\otimes u=0.\]
    This completes the proof of the commutativity.  
\end{proof}
Denote the projection from $\M$ to $T_{m}(M^1, M^2)$ by $P$, then $P\circ F^{\circ}(-, z)$ is a linear map from $M^1$ to $\Hom\left(M^2, T_{m}(M^1, M^2)\right)[[z^{\pm\frac{1}{T}}]]$ satisfying the generalized Jacobi identity. Moreover, we have
\begin{equation}
    P\circ v_{(n)}M^2(p)\subseteq T(M^1, M^2)(p+\deg v-n-1)
\end{equation}
for $v\in M^1$ homogeneous, $p\in \frac{1}{T}\N$, and $n\in \frac{1}{T}\Z$. 
\begin{lemma}\label{lem:for-L_{(-1)}-derivative-property}
    Let $M^i=\oplus_{n\in\frac{1}{T}\N}M^i(n)$ be a $g_i$-twisted module such that $L_{(0)}$ acts on $M^i(n)$ as $(h_i+n)Id$ with $h_i\in \C$ for $i=1, 2, 3$. Suppose
    \begin{align*}
        A^{\circ}(-, z)\colon &M^1\longrightarrow \Hom(M^2, M^3)[[z^{\pm\frac{1}{T}}]]\\
        &v\mapsto A(v, z)=\sum_{n\in\frac{1}{T}\Z}v_{(n)}z^{-n-1}
    \end{align*}
    is a linear map satisfying the generalized Jacobi identity and $v_{(n)}M^2(p)\subseteq M^3(p+\deg v-n-1)$ for any $p\in \frac{1}{T}\N$. Set 
    \begin{equation}
        A(v, z)=z^{h_3-h_1-h_2}A^{\circ}(v, z).
    \end{equation}
    Then 
    \begin{equation}
        A(L_{(-1)}v, z)=\odv{z}A(v, z).
    \end{equation}
\end{lemma}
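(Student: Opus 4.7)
The plan is to derive the identity from the generalized Jacobi identity applied to the conformal vector $a = \omega$, combined with the grading hypothesis $v_{(n)} M^2(p) \subseteq M^3(p + \deg v - n - 1)$. Since $\omega \in V^{(0,0)}$, the fractional powers in the Jacobi identity disappear for this choice, and the identity takes its untwisted form; the argument is then a grading-sensitive adaptation of the derivation in \cite{FHL93}.

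I would first apply $\Res_{z_0}$ to the Jacobi identity for $A^\circ$ with $\omega$, producing the commutator-type identity
\begin{equation*}
Y_{M^3}(\omega, z_1) A^\circ(v, z_2) - A^\circ(v, z_2) Y_{M^2}(\omega, z_1) = \sum_{n, m} \binom{n}{m+1} z_2^{n-m-1} z_1^{-n-1} A^\circ(L_{(m)} v, z_2).
\end{equation*}
Applying $\Res_{z_1} z_1$ then extracts $L_{(0)}$ on the left-hand side; on the right-hand side only $n = 0$ contributes, and $\binom{1}{m+1}$ vanishes unless $m \in \{-1, 0\}$, yielding
\begin{equation*}
L_{(0)}^{M^3} A^\circ(v, z_2) - A^\circ(v, z_2) L_{(0)}^{M^2} = z_2 A^\circ(L_{(-1)} v, z_2) + A^\circ(L_{(0)} v, z_2).
\end{equation*}

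Next, I would evaluate the same left-hand side a second way, using the grading hypothesis. For homogeneous $v \in M^1(k)$ and $v_2 \in M^2(p)$, the vector $v_{(n)} v_2 \in M^3(p + k - n - 1)$, so a short manipulation gives
\begin{equation*}
L_{(0)}^{M^3} A^\circ(v, z_2) v_2 - A^\circ(v, z_2) L_{(0)}^{M^2} v_2 = (h_3 - h_2 + k) A^\circ(v, z_2) v_2 + z_2 \odv{z_2} A^\circ(v, z_2) v_2.
\end{equation*}
Since $L_{(0)} v = (h_1 + k) v$ gives $A^\circ(L_{(0)} v, z_2) = (h_1 + k) A^\circ(v, z_2)$, equating the two expressions for the commutator and dividing through by $z_2$ yields
\begin{equation*}
A^\circ(L_{(-1)} v, z_2) = (h_3 - h_1 - h_2) z_2^{-1} A^\circ(v, z_2) + \odv{z_2} A^\circ(v, z_2).
\end{equation*}
Applying the product rule to $A(v, z_2) = z_2^{h_3 - h_1 - h_2} A^\circ(v, z_2)$ then converts this into the desired identity $A(L_{(-1)} v, z_2) = \odv{z_2} A(v, z_2)$, and extension to arbitrary (non-homogeneous) $v$ follows by linearity.

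There is no deep obstacle here; the argument is essentially residue bookkeeping. The one point worth emphasizing is that the grading hypothesis, which replaces the $L_{(-1)}$-derivative axiom normally imposed on intertwining operators, is precisely what upgrades the weaker commutator identity $[L_{(-1)}, A^\circ(v, z_2)] = A^\circ(L_{(-1)} v, z_2)$ --- obtained by taking $\Res_{z_1}$ in place of $\Res_{z_1} z_1$ --- to the full derivative property. Without grading control, the derivative structure would not emerge from the Jacobi identity alone.
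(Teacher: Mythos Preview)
Your proof is correct and follows essentially the same route as the paper: derive the identity $[L_{(0)}, A^\circ(v,z)] = zA^\circ(L_{(-1)}v,z) + A^\circ(L_{(0)}v,z)$ from the Jacobi identity with $a=\omega$, compute the same commutator via the grading hypothesis, equate, divide by $z$, and apply the product rule. One tiny slip in your write-up: after applying $\Res_{z_1}z_1$ it is the $n=1$ term that survives, not $n=0$ --- your subsequent use of $\binom{1}{m+1}$ shows you already have the right term in hand, so this is only a typo in the narrative.
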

\begin{proof}
    Suppose $v\in M^1$ is homogeneous and $v_2\in M^2(p)$ for $p\in \frac{1}{T}\N$. The generalized Jacobi identity implies the following formula:
    \begin{equation}
        \left[L_{(0)}, A^{\circ}(v, z)\right]=zA^{\circ}(L_{(-1)}v, z)+A^{\circ}(L_{(0)}v, z).
    \end{equation}
    Note that 
    \begin{equation}
        \begin{aligned}
            \MoveEqLeft
            \left[L_{(0)}, v_{(n)}\right]v_2\\
            &=L_{(0)}v_{(n)}v_2-v_{(n)}L_{(0)}v_2\\
            &=(h_3+p+\deg v-n-1)v_{(n)}v_2-(h_2+p)v_{(n)}v_2\\
            &=(h_3-h_2+\deg v-n-1)v_{(n)}v_2.
        \end{aligned}    
    \end{equation}
    Thus
    \begin{equation}
        \begin{aligned}
            \MoveEqLeft
            A^{\circ}(L_{(-1)}v, z)v_2\\
            &=z^{-1}\left[L_{(0)}, A^{\circ}(v, z)\right]v_2-z^{-1}A^{\circ}(L_{(0)}v, z)v_2\\
            &=(h_3-h_1-h_2)z^{-1}A^{\circ}(v, z)v_2+\odv{z}A^{\circ}(v, z)v_2.
        \end{aligned}
    \end{equation}
    Therefore,
    \begin{equation}
        \begin{aligned}
            \MoveEqLeft
            A(L_{(-1)}v, z)v_2\\
            &=(h_3-h_1-h_2)z^{h_3-h_1-h_2-1}A^{\circ}(v, z)v_2+z^{h_3-h_1-h_2}\odv{z}A^{\circ}(v, z)v_2\\
            &=\odv{z}A(v, z)v_2.
        \end{aligned}
    \end{equation}
\end{proof}
Recall that we have $T_{m}(M^1, M^2)=\oplus_{i\in \Lambda}T_{m}^{\lambda_i}(M^1, M^2)$. Let $P_{\lambda_i}$ be the projection from $T_{m}(M^1, M^2)$ to $T_{m}^{\lambda_i}(M^1, M^2)$. 
\begin{proposition}\label{prop:L_{-1}-derivative-of-F}
    For $v\in M^1$, define 
    \begin{equation}
        F(v, z)=\sum_{i\in \Lambda}z^{\lambda_i-h_1-h_2}P_{\lambda_i}\circ P\circ F^{\circ}(v, z).
    \end{equation}
    Then we have 
    \begin{equation}
        F(L_{(-1)}v, z)=\odv{z}F(v, z).
    \end{equation}
\end{proposition}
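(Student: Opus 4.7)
The plan is to apply Lemma \ref{lem:for-L_{(-1)}-derivative-property} separately to each component $T_{m}^{\lambda_i}(M^1, M^2)$ and then sum the resulting identities over $i \in \Lambda$. The idea is that Propositions \ref{prop:associativity-for-F^circ} and \ref{prop:commutativity-for-F^circ} already give the associativity and commutativity for $F^{\circ}(-, z)$, which together are equivalent to the generalized Jacobi identity. So $F^{\circ}$ meets the hypotheses of Lemma \ref{lem:for-L_{(-1)}-derivative-property} except for landing in a module with a well-defined conformal weight; passing through the projection to each eigencomponent repairs this.

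More concretely, for each $i \in \Lambda$, the projection $P$ onto $T_m(M^1, M^2)$ is a $V$-module homomorphism (since $\text{Rad}_m(\M)$ is a $V$-submodule), and $P_{\lambda_i}$ is a $V$-module homomorphism onto the $L_{(0)}$-generalized-eigenspace summand $T_m^{\lambda_i}(M^1, M^2)$. Consequently, $A^{\circ}_i(-, z) := P_{\lambda_i} \circ P \circ F^{\circ}(-, z)$ inherits the generalized Jacobi identity from $F^{\circ}(-, z)$, because the projections commute with all coefficient operators $Y_{\M}(a, z_1)$ arising in the Jacobi identity. Moreover, $T_m^{\lambda_i}(M^1, M^2)$ is a lower truncated $\frac{1}{T}\Z$-graded weight module on which $L_{(0)}$ acts on degree $n$ as $(\lambda_i + n)\mathrm{Id}$, and the grading-preservation property $(A^{\circ}_i)_{(n)} M^2(p) \subseteq T_m^{\lambda_i}(M^1, M^2)(p + \deg v - n - 1)$ follows directly from the analogous property already established for $F^{\circ}(-, z)$ combined with the fact that $P$ and $P_{\lambda_i}$ preserve the $\frac{1}{T}\N$-grading.

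Given these verifications, Lemma \ref{lem:for-L_{(-1)}-derivative-property} applies to $A^{\circ}_i(-, z)$ with $(h_1, h_2, h_3) = (h_1, h_2, \lambda_i)$, yielding
\[
    z^{\lambda_i - h_1 - h_2} A^{\circ}_i(L_{(-1)} v, z) = \frac{d}{dz}\bigl( z^{\lambda_i - h_1 - h_2} A^{\circ}_i(v, z) \bigr).
\]
Summing over $i \in \Lambda$ gives $F(L_{(-1)} v, z) = \frac{d}{dz} F(v, z)$, as required.

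The only point requiring care is the first one: confirming that $P$ and $P_{\lambda_i}$ truly commute with the twisted module action on $\M$, so that the generalized Jacobi identity passes through them. For $P$, this is immediate from the definition of quotient module structure. For $P_{\lambda_i}$, it follows from the fact that $L_{(0)}$ commutes with every $a_{(n)}$ up to a shift which is an integer on each graded piece, so the decomposition into generalized eigenspaces of $L_{(0)}$ is stable under the action of $V$; hence each $T_m^{\lambda_i}(M^1, M^2)$ is a $V$-submodule and $P_{\lambda_i}$ is a module map. Everything else is a direct application of the lemma and linearity.
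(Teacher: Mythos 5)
Your proof is correct and takes essentially the same route as the paper, which simply states that the proposition follows from Lemma \ref{lem:for-L_{(-1)}-derivative-property} immediately; you fill in the verification the paper leaves implicit: that each $A^{\circ}_i = P_{\lambda_i}\circ P\circ F^{\circ}$ inherits the generalized Jacobi identity and the grading-preservation property (because $P$ and each $P_{\lambda_i}$ are grading-preserving $V$-module maps), that $T_m^{\lambda_i}(M^1,M^2)$ has $L_{(0)}$ acting on its degree-$n$ piece as $(\lambda_i+n)\mathrm{Id}$ so the lemma applies with $h_3=\lambda_i$, and that summing over $i\in\Lambda$ recovers the stated identity for $F$. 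One cosmetic remark: rather than arguing via stability of $L_{(0)}$-generalized eigenspaces, you can simply invoke the decomposition $T_m(M^1,M^2)=\oplus_{i\in\Lambda}T_m^{\lambda_i}(M^1,M^2)$ into $V$-submodules, which the paper has already provided, to conclude that each $P_{\lambda_i}$ is a $V$-module projection.
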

\begin{proof}
    It follows from Lemma \ref{lem:for-L_{(-1)}-derivative-property} immediately.
\end{proof}
To summarize, we have the following theorem:
\begin{theorem}
    Let $M^i$ be an irreducible $g_i$-twisted module with conformal weight $h_i$ for $i=1, 2$. Then $F(-, z)$ is a twisted intertwining operator in $\mathcal{I}\binom{T_{m}(M^1, M^2)}{M^1M^2}$. 
\end{theorem}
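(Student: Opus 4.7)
The plan is to verify the three defining properties of a twisted intertwining operator in Definition~\ref{def:TIO}: the truncation property, the generalized Jacobi identity, and the $L_{(-1)}$-derivative property. The $L_{(-1)}$-derivative property is already established in Proposition~\ref{prop:L_{-1}-derivative-of-F}, so the remaining work is to confirm truncation and the Jacobi identity.

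For truncation, I would fix a homogeneous $v\in M^1$ and $v_2\in M^2(p)$ and observe that, by construction of the operators $v_{(n)}$, the image $v_{(n)}v_2$ lies in $\M(p+\deg v-n-1)$, which forces $v_{(n)}v_2=0$ whenever $p+\deg v-n-1<0$, i.e., for all sufficiently large $n$. Applying the module projections $P$ and $P_{\lambda_i}$ preserves this vanishing, and the shift by $z^{\lambda_i-h_1-h_2}$ merely offsets the powers of $z$. Extending to arbitrary $v_2\in M^2$ through the graded decomposition $M^2=\oplus_{p\in\frac{1}{T}\N}\A_{g_2,g_2,p,m}(V)\otimes_{A_{g_2,m}(V)}M^2(m)$ then yields the truncation property for $F$.

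For the generalized Jacobi identity, the strategy is to combine the associativity (Proposition~\ref{prop:associativity-for-F^circ}) and commutativity (Proposition~\ref{prop:commutativity-for-F^circ}) of $F^\circ(-,z)$, which together encode the Jacobi identity for $F^\circ$ valued in $\M$ (after removing the harmless $z_2^{\deg v - q}$ factor that merely ensures the identity lies in the appropriate formal series ring). Since $P$ and $P_{\lambda_i}$ are $V$-module homomorphisms, they commute with $Y_{\M}(a,z_1)$, so applying them preserves the Jacobi identity and gives one for $P_{\lambda_i}\circ P\circ F^\circ$ valued in $T_m^{\lambda_i}(M^1,M^2)$. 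Because $T_m^{\lambda_i}(M^1,M^2)$ has conformal weight $\lambda_i$, the shift $z^{\lambda_i-h_1-h_2}$ is exactly the conversion between the $\circ$-normalization and the original normalization of Proposition~\ref{prop:shifted-TIO} (playing the role of $h_3=\lambda_i$), so it turns the $F^\circ$-form of the Jacobi identity into the $F$-form demanded by Definition~\ref{def:TIO}. Summing over $i\in\Lambda$ yields the full Jacobi identity for $F$ with target $T_m(M^1,M^2)$.

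The main obstacle—establishing associativity and commutativity in the presence of the somewhat elaborate actions $\bar{\ast}_{g_3,m,p}^n$ and $\underline{\ast}_{g_2,m,p}^n$—has in fact already been overcome in Propositions~\ref{prop:associativity-for-F^circ} and~\ref{prop:commutativity-for-F^circ} via the combinatorial identities of Lemma~\ref{lem:combinatorics-identity}. What remains is essentially bookkeeping: matching the conformal-weight shift uniformly across the decomposition $T_m(M^1,M^2)=\oplus_i T_m^{\lambda_i}(M^1,M^2)$, and verifying that the grading-based truncation descends to the quotient by $\text{Rad}_{m}(\M)$. Both are routine given the admissible $g_3$-twisted module structure on $\M$ already constructed in Section~\ref{sec:bimodules}.
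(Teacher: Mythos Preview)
Your proposal is correct and mirrors the paper's approach exactly: the paper states this theorem as a summary (``To summarize, we have the following theorem'') without a separate proof, relying on precisely the ingredients you identify---Propositions~\ref{prop:associativity-for-F^circ} and~\ref{prop:commutativity-for-F^circ} for the Jacobi identity (via associativity and commutativity), Proposition~\ref{prop:L_{-1}-derivative-of-F} for the $L_{(-1)}$-derivative, and the grading argument for truncation. Your remark about removing the $z_2^{\deg v - q}$ factor is exactly what the paper notes after Proposition~\ref{prop:associativity-for-F^circ}, and your observation that the module projections $P$, $P_{\lambda_i}$ intertwine with $Y_{\M}$ and that the $z^{\lambda_i-h_1-h_2}$ shift converts between the $F^\circ$- and $F$-normalizations is the correct bookkeeping.
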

We are now in a position to give our second main result.
\begin{theorem}
    Let $V$ be a strongly rational vertex operator algebra, $g_1, g_2$ are two commuting automorphisms of $V$ satisfying $g_i^T=1$ for $i=1, 2$. Let $M^i$ be an irreducible $g_i$-twisted module with conformal weight $h_i$ for $i=1, 2$ and $g_3=g_1g_2$. Then for any $m\in \frac{1}{T}\N$, the pair $\left(T_{m}(M^1, M^2), F(-, z)\right)$ is a tensor product of $M^1$ and $M^2$.
\end{theorem}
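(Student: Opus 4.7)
The plan is to construct an intermediate $V$-module homomorphism $\tilde f\colon \M(M^1, M^2(m))\to M^3$ from the admissible $g_3$-twisted module of Theorem \ref{thm:g_3-twisted-module}, and then restrict to $T_m(M^1, M^2)$, regarded as the submodule of $\M(M^1, M^2(m))$ generated by its $m$-th component (via the splitting supplied by $g_3$-rationality). For $I\in \mathcal{I}\binom{M^3}{M^1M^2}$, I would set
\[
\tilde f(v\otimes u)=o^I_{n, m}(v)u\qquad \text{for } v\otimes u\in \A_{g_3, g_2, n, m}(M^1)\otimes_{A_{g_2, m}(V)}M^2(m),\ n\in \tfrac{1}{T}\N.
\]
The quotient by $\mathcal{O}_{g_3, g_2, n, m}(M^1)$ is killed by Proposition \ref{prop:reducible-M2-M3} (which extends the defining intersection to reducible $M^2$ and $M^3$), and the balance over $A_{g_2, m}(V)$ is Proposition \ref{prop:reason-for-right-action} at $p=m$, so $\tilde f$ is well-defined.

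Next I would verify that $\tilde f$ is $V$-linear using the module structure \eqref{eq:module-structure-of-M} and Proposition \ref{prop:reason-for-left-action} with intermediate index $n$:
\[
\tilde f\bigl(a_{(p)}(v\otimes u)\bigr)=o^I_{n+\wt a-p-1, m}\bigl(a\bar{\ast}^{n+\wt a-p-1}_{g_3, m, n}v\bigr)u=o^{M^3}_{n+\wt a-p-1, n}(a)\,o^I_{n, m}(v)u=a_{(p)}\tilde f(v\otimes u),
\]
using that $\tilde f(v\otimes u)\in M^3(n)\subseteq \Omega_n(M^3)$ and $o^{M^3}_{n+\wt a-p-1, n}(a)$ restricts to $a_{(p)}$ there. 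Setting $f=\tilde f|_{T_m(M^1, M^2)}$ via the identification $T_m(M^1, M^2)\cong$ submodule generated by $\M(M^1, M^2(m))(m)$ then gives a $g_3$-twisted module homomorphism into $M^3$.

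To verify $f\circ F=I$, I would realize $v_2\in M^2(p)$ as $b\otimes u$ with $b\otimes u\leftrightarrow o^{M^2}_{p, m}(b)u$ using Theorem \ref{thm:Verma}. The definition of $F^\circ$ preceding Proposition \ref{prop:associativity-for-F^circ} together with Proposition \ref{prop:reason-for-right-action} yields $\tilde f(F^\circ(v, z)v_2)=I^\circ(v, z)v_2$ coefficient-by-coefficient. Because each $U_i$ sits in the $L_{(0)}$-eigenspace at eigenvalue $\lambda_i+m$ and $\tilde f$ is $L_{(0)}$-equivariant, $\tilde f(U_i)\subseteq \bigoplus_{\gamma:\,h_3^\gamma=\lambda_i}M^{3\gamma}(m)$, and since $T_m^{\lambda_i}(M^1, M^2)$ is $V$-generated by $U_i$, $\tilde f(T_m^{\lambda_i})\subseteq\bigoplus_{\gamma:\,h_3^\gamma=\lambda_i}M^{3\gamma}$. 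Expanding
\[
f(F(v, z)v_2)=\sum_i z^{\lambda_i-h_1-h_2}f(P_{\lambda_i}P F^\circ(v, z)v_2)
\]
and collecting by $\gamma$, the shift $z^{\lambda_i-h_1-h_2}$ recombines with the $\gamma$-components ($h_3^\gamma=\lambda_i$) to reproduce the shifts $z^{h_3^\gamma-h_1-h_2}$ in $I=\sum_\gamma I^\gamma$, giving $f\circ F=I$. Uniqueness follows because $f'\circ F=I$ determines any such $f'$ on $\A_{g_3, g_2, m, m}(M^1)\otimes_{A_{g_2, m}(V)}M^2(m)$ (by Proposition \ref{prop:vacuum-action}, every $v\otimes u$ there is an $F^\circ$-coefficient via $v\underline{\ast}_{g_2, m, m}^m\vac=v$), and this bottom piece generates $T_m(M^1, M^2)$ as a $V$-module.

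The main obstacle I anticipate is the weight bookkeeping in the third paragraph: reconciling the $z^{\lambda_i-h_1-h_2}$ shifts built into $F$ on each block $T_m^{\lambda_i}$ against the conformal-weight shifts of the corresponding irreducible summands of $M^3$ under $f$, and confirming that the complementary summand in $\M(M^1, M^2(m))=T_m(M^1, M^2)\oplus\text{Rad}_m(\M(M^1, M^2(m)))$ contributes nothing to $f\circ F-I$. All other steps are direct applications of the left- and right-action formulas of Propositions \ref{prop:reason-for-left-action} and \ref{prop:reason-for-right-action} together with the structure already established on $\M(M^1, M^2(m))$.
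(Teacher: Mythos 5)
Your proposal follows the same route as the paper: define $\tilde f$ on $\mathcal{M}(M^1,M^2(m))$ by $v\otimes u\mapsto o^I_{n,m}(v)u$, check well-definedness via Propositions \ref{prop:reducible-M2-M3} and \ref{prop:reason-for-right-action}, check $V$-linearity via Proposition \ref{prop:reason-for-left-action}, restrict to $T_m(M^1,M^2)$, verify $f\circ F=I$ by expanding $v_2=b\otimes u$, and obtain uniqueness from the fact that the $m$-th component generates.

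The one organizational difference is that the paper first reduces, via $g_3$-rationality, to an \emph{irreducible} target $W$; then $W$ has a single conformal weight $\lambda$, so there is a single index $\lambda_{i_0}=\lambda$ in the decomposition $T_m(M^1,M^2)=\oplus_{i\in\Lambda}T_m^{\lambda_i}(M^1,M^2)$ and the map $f$ is identically zero on all other blocks. This makes the weight bookkeeping you flagged as the main obstacle essentially trivial: one shows $f=f\circ P_\lambda\circ P$, and the shift $z^{\lambda-h_1-h_2}$ built into $F$ on the $\lambda$-block exactly matches the one converting $I^\circ$ to $I$. Your version, which keeps $M^3$ possibly reducible and sorts the irreducible summands $M^{3\gamma}$ by which $\lambda_i$ each matches, arrives at the same identity but with more indices to track; the reduction to irreducible $W$ is available here precisely because $V$ is $g_3$-rational and both $\mathrm{Hom}_V(T_m,-)$ and $\mathcal{I}\binom{-}{M^1M^2}$ are additive in the third slot.

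Two points you flagged as gaps are in fact handled in the paper, and you should include them. First, $\tilde f$ does kill $\mathrm{Rad}_m(\mathcal{M})$: since $\tilde f$ preserves the $\frac1T\N$-grading, a lowest-weight vector of $\mathcal{M}$ sitting in degree $n>0$ would map to a lowest-weight vector of $W$ in degree $n>0$, of which there are none for irreducible $W$; so $\tilde f$ vanishes on the radical and factors through $T_m(M^1,M^2)$. Second, for uniqueness, the paper uses the slightly stronger fact that the coefficients of $F(v,z)v_2$, $v\in M^1$, $v_2\in M^2$, already span all of $T_m(M^1,M^2)$ (the $v_{(n)}$-image spans $\mathcal{M}$, hence its projection spans $T_m$), which is cleaner than the two-step "they span the $m$-th level, which generates."
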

\begin{proof}
    Since $V$ is $g_3$-rational, it suffices to show that for any irreducible $g_3$-twisted module $W$ and an intertwining operator $i\in \mathcal{I}\binom{W}{M^1M^2}$, there exists a unique $g_3$-twisted module homomorphism $f$ from $\mathcal{M}\left(M^1, M^2(m)\right)$ to $W$ such that $I=f\circ F$. Since $W$ is irreducible, $W=\oplus_{n\in \frac{1}{T}\N}W_{\lambda+n}=\oplus_{n\in \frac{1}{T}\N}W(n)$. For homogeneous $v\otimes u\in \mathcal{A}_{g_3, g_2, n, m}(M^1)\otimes M^2(m)$, we define
    \[f\colon \mathcal{M}\left(M^1, M^2(m)\right)(n)\longrightarrow W\]
    \[\hspace{3cm} v\otimes u\mapsto o^I_{n, m}(v)u,\] and then linearly extend it to the whole $\M\left(M^1, M^2(m)\right)$.
    It is straightforward to see that $f$ is well-defined, and for any homogeneous $a\in V^{(j_1, j_2)}$, $p\in \frac{j_1+j_2}{T}+\Z$, we have
    \begin{align*}
        \MoveEqLeft
        f\left(a_{(p)}(v\otimes u)\right)\\
        &=f\left((a\ast_{g_3, m, n}^{n+\wt a-p-1}v)\otimes u\right)\\
        &=o^I_{n+\wt a-p-1, m}(a\ast_{g_3, m, n}^{n+\wt a-p-1}v)u\\
        &=o^W_{n+\wt a-p-1, n}(a)o^I_{n, m}(v)u\\
        &=a_{(p)}f(v\otimes u).
    \end{align*}
    Thus $f$ is a homomorphism from $\M$ to $W$. By definition, $f$ preserves grading. Any lowest weight vector in $\M(n)$ with $n\neq 0$ is sent to 0 since there is no nonzero lowest weight vector in $W(n)$. Thus $f(\text{Rad}_{m}(\M))=0$. In addition, $f$ preserves the $L_{(0)}$-spectrum. Recall that $T_{m}(M^1, M^2)=\oplus_{\lambda\in \Lambda}T_{m}^{\lambda_i}(M^1, M^2)$. If we assume the conformal weight of $W$ is $\lambda$, then there exists a $\lambda_{i_0}=\lambda$ for some $i_0\in \Lambda$ such that $f$ on $\oplus_{i\in \Lambda, i\neq i_0}T_{m}^{\lambda_i}(M^1, M^2)$ is the zero map. In conclusion, for any $v_3\in \M$, we have 
    \begin{equation}
        f(v_3)=f\circ P_{\lambda}\circ P(v_3).
    \end{equation}
    Let $\tilde{f}$ be the restriction of $f$ on $T_{m}(M^1, M^2)$. Note that for any homogeneous $v_2\in M^2(p)$, $v_2=b\otimes u$ for some $b\in A_{g_2, p, m}(V)$ and $u\in M^2(m)$. Thus we have 
    \begin{align*}
        \MoveEqLeft
        \tilde{f}\circ F(v, z)v_2=f\circ \sum_{i\in\Lambda}z^{\lambda_i-h_1-h_2}P_{\lambda_i}\circ P\circ F^{\circ}(v, z)v_2\\
        &=z^{\lambda-h_1-h_2}f\circ P_{\lambda}\circ P\circ F^{\circ}(v, z)v_2=z^{\lambda-h_1-h_2}\sum_{n\in \frac{1}{T}\Z}z^{-n-1}f(v_{(n)}(b\otimes u))\\
        &=\sum_{n\in \frac{1}{T}\Z}z^{\lambda-h_1-h_2-n-1}f\left((v\underline{\ast}_{g_2, m, p}^{p+\deg v-n-1}b)\otimes u\right)\\
        &=\sum_{n\in \frac{1}{T}\Z}z^{\lambda-h_1-h_2-n-1}o^I_{p+\deg v-n-1, m}(v\underline{\ast}_{g_2, m, p}^{p+\deg v-n-1}b)u\\
        &=\sum_{n\in \frac{1}{T}\Z}z^{\lambda-h_1-h_2-n-1}o^I_{p, p+\deg v-n-1}(v)o^{M^2}_{p, m}(b)u\\
        &=I(v, z)v_2.
    \end{align*}
    Now suppose there is another homomorphism $f_1\colon T_{m}(M^1, M^2)\rightarrow W$ satisfying $f_1\circ F=I$. Then $\tilde{f}\circ F(v, z)v_2=f_1\circ F(v, z)v_2$ for all $v\in M^1, v_2\in M^2$. Since all the coefficients in $F(v, z)v_2$ for $v\in M^1$ and $v_2\in M^2$ linearly span the whole $T_{m}(M^1, M^2)$, $\tilde{f}=f_1$.
\end{proof}
We have the following corollary:
\begin{corollary}
    Let $m_1, m_2$ be two rational numbers in $\frac{1}{T}\N$. Then
    \begin{equation}
        T_{m_1}(M^1, M^2)\cong T_{m_2}(M^1, M^2).
    \end{equation}
\end{corollary}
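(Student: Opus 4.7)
The plan is to invoke the uniqueness of objects characterised by a universal property. By the preceding theorem, for each $m\in\frac{1}{T}\N$ the pair $\bigl(T_m(M^1, M^2), F_m(-, z)\bigr)$ is a tensor product of $M^1$ and $M^2$ in the sense of the definition at the end of Subsection \ref{subsec:TIO}, where I write $F_m$ for the intertwining operator constructed in the discussion preceding that theorem (the construction, and hence $F_m$, depends on the choice of $m$). Since any two tensor products of the same ordered pair of twisted modules are canonically isomorphic via their universal property, the corollary is immediate.

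Concretely, applying the universal property of $\bigl(T_{m_1}(M^1, M^2), F_{m_1}\bigr)$ to the intertwining operator $F_{m_2}\in \mathcal{I}\binom{T_{m_2}(M^1, M^2)}{M^1 M^2}$ yields a unique $g_3$-twisted module homomorphism
\[
f_{12}\colon T_{m_1}(M^1, M^2)\longrightarrow T_{m_2}(M^1, M^2)
\]
with $f_{12}\circ F_{m_1}=F_{m_2}$. Swapping the roles of $m_1$ and $m_2$ produces $f_{21}\colon T_{m_2}(M^1, M^2)\to T_{m_1}(M^1, M^2)$ with $f_{21}\circ F_{m_2}=F_{m_1}$. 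Then both $f_{21}\circ f_{12}$ and $\id_{T_{m_1}(M^1, M^2)}$ are $g_3$-twisted module homomorphisms whose composition with $F_{m_1}$ equals $F_{m_1}$, and the uniqueness clause of the universal property forces $f_{21}\circ f_{12}=\id$. Symmetrically $f_{12}\circ f_{21}=\id$, so $f_{12}$ and $f_{21}$ are mutually inverse, giving the desired isomorphism.

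The argument is entirely formal once the preceding theorem is in hand, and there is no substantive obstacle. The only points worth double-checking are bookkeeping: first, that the notation in the preceding theorem be upgraded to reflect the $m$-dependence of the intertwining operator (so that $F_{m_1}$ and $F_{m_2}$ are distinct data to be compared), and second, that the construction of $F_m$ indeed produces a nonzero intertwining operator so that the universal property is non-vacuously witnessed. The latter follows from Proposition \ref{prop:vacuum-action}: pairing $F_m(v, z)$ against $\vac\otimes u\in \mathcal{A}_{g_2, g_2, m, m}(V)\otimes_{A_{g_2, m}(V)} M^2(m)$ one finds $v_{(\deg v-1)}(\vac\otimes u)=v\otimes u$, which is nonzero in $\mathcal{M}(M^1, M^2(m))$ for generic $v$ and $u$.
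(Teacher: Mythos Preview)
Your proof is correct and is exactly the argument the paper intends: the corollary is stated without proof immediately after the theorem establishing that each $\bigl(T_m(M^1,M^2),F_m\bigr)$ is a tensor product, and the isomorphism follows from the standard uniqueness-up-to-isomorphism of universal objects. The final paragraph checking that $F_m$ is nonzero is superfluous for the corollary itself---the universal-property argument goes through regardless---but it does no harm.
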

As an application, we have the following twisted fusion rules theorem:
\begin{theorem}\label{thm:twisted-fusion-rule}
    Suppose $V$ is strongly rational, and $g_1, g_2$ are two commuting automorphisms with finite order. Let $g_3=g_1g_2$, and $M^i$ be an irreducible $g_i$-twisted $V$-module for $i=1, 2, 3$. Then for any $m\in \frac{1}{T}\N$, as vector spaces, we have 
    \begin{equation}
        \mathcal{I}\binom{M^3}{M^1M^2}\cong\Hom_{A_{g_3, m}(V)}\left(\mathcal{A}_{g_3, g_2, m, m}(M^1)\otimes_{A_{g_2, m}(V)}M^2(m), M^3(m)\right).
    \end{equation}
 \end{theorem}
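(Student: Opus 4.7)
The plan is to combine the tensor product construction of the previous theorem with a restriction-to-bottom-component argument. The target is to establish the chain of isomorphisms
\[
\mathcal{I}\binom{M^3}{M^1M^2} \cong \Hom_V\bigl(T_m(M^1,M^2),\, M^3\bigr) \cong \Hom_{A_{g_3,m}(V)}\bigl(T_m(M^1,M^2)(m),\, M^3(m)\bigr),
\]
combined with the identification $T_m(M^1,M^2)(m) = \mathcal{A}_{g_3,g_2,m,m}(M^1) \otimes_{A_{g_2,m}(V)} M^2(m)$.

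First, I would invoke the universal property of $(T_m(M^1,M^2), F(-,z))$ established in the previous theorem: every intertwining operator $I$ factors uniquely as $I = \tilde{I} \circ F$ for a $V$-module homomorphism $\tilde{I}\colon T_m(M^1,M^2) \to M^3$, which yields the first isomorphism. The identification of $T_m(M^1,M^2)(m)$ is immediate from the definition $T_m(M^1,M^2) = \mathcal{M}(M^1,M^2(m))/\text{Rad}_m(\mathcal{M}(M^1,M^2(m)))$ together with the fact that $\text{Rad}_m$ intersects the $m$-th component trivially.

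For the second isomorphism, I would set up the restriction map $\Phi\colon f \mapsto f|_{T_m(M^1,M^2)(m)}$. Well-definedness: $f$ commutes with $L_{(0)}$, so it kills every summand $T_m^{\lambda_i}$ with $\lambda_i \neq h_3$ (where $h_3$ is the conformal weight of $M^3$), and the restriction lands in $M^3(m)$; $A_{g_3,m}(V)$-linearity is automatic since $V$-module maps commute with all zero-weight operators $a_{(\wt a - 1)}$. Injectivity of $\Phi$ follows because $T_m(M^1,M^2)$ is generated as a $V$-module by its $m$-th component. For surjectivity, I would use $g_3$-regularity (Proposition \ref{prop:rationality}) to decompose $T_m(M^1,M^2) = \bigoplus_k N_k^{\oplus c_k}$ into irreducible $g_3$-twisted modules, and then apply the classification in Theorem \ref{thm:twistedZhu}(5): each irreducible $N$ is of the form $L(N(m))$ with $N(m)$ an irreducible $A_{g_3,m}(V)$-module, and the assignment $N \mapsto N(m)$ is injective on isomorphism classes. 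Hence $\Hom_{A_{g_3,m}(V)}(N_k(m), M^3(m)) = 0$ for every $k$ with $N_k \not\cong M^3$, so any given $\phi$ is supported on the $M^3$-isotypic part, where Schur's lemma extends it uniquely to a $V$-module map assembling to $\tilde{\phi}\colon T_m(M^1,M^2) \to M^3$.

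The main obstacle is the surjectivity step, which rests on the structural fact that for an irreducible $g_3$-twisted module $N$, the degree-$m$ component $N(m)$ is irreducible over $A_{g_3,m}(V)$ and determines $N$ up to isomorphism. This follows from Theorem \ref{thm:twistedZhu}(5) via the equivalence ``$L(U)$ irreducible iff $U$ irreducible'' together with the uniqueness of the irreducible quotient of $\overline{M}(U)$ under $g_3$-rationality; extracting this correspondence cleanly is the delicate point of the argument.
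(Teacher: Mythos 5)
Your proof is correct and follows the same two-step architecture as the paper: first reduce to $\Hom_V\bigl(T_m(M^1,M^2), M^3\bigr)$ via the universal property of the tensor product, then pass to $\Hom_{A_{g_3,m}(V)}$ by restricting to the degree-$m$ piece. Where the paper compresses the second step into the single assertion that $g_3$-rationality makes $T_m(M^1,M^2)$ the Verma type module over its $m$-th component (so restriction is an isomorphism by the universal property of $\overline{M}(U)$ from Theorem \ref{thm:twistedZhu}(5)), you instead decompose $T_m(M^1,M^2)$ into irreducible summands and argue summand-by-summand via Schur's lemma and the bijection $N \leftrightarrow N(m)$; this is precisely the verification that the paper's Verma-module claim requires, so the two arguments are essentially equivalent. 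One small point worth tightening: in the well-definedness discussion, the fact that $f$ annihilates $T_m^{\lambda_i}$ for $\lambda_i\neq h_3$ is not purely an $L_{(0)}$-eigenvalue count when $\lambda_i-h_3\in\frac{1}{T}\Z\setminus\{0\}$ — there you need the Schur argument (no irreducible summand of $T_m^{\lambda_i}$ can be isomorphic to $M^3$ since the conformal weights differ), which you in effect invoke later anyway.
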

 \begin{proof}
     We have a linear isomorphism
     \begin{equation}
         \mathcal{I}\binom{M^3}{M^1M^2}\cong \Hom_{V}\left(T_{m}(M^1, M^2), M^3\right)
     \end{equation}
     by the definition of the tensor product of $M^1$ and $M^2$. In addition, since $V$ is $g_3$-rational, $T_{m}(M^1, M^2)$ is the Verma type $g_3$-twisted module generated by its $m$-th component, and for any $f\in \Hom_{V}\left(T_{m}(M^1, M^2), M^3\right)$, 
     we have
     \begin{equation}
         f\left(\mathcal{A}_{g_3, g_2, m, m}(M^1)\otimes_{A_{g_2, m}(V)}M^2(m)\right)\subseteq M^3(m).
     \end{equation}
     Hence
     \begin{equation}
         \Hom_{V}\left(T_{m}(M^1, M^2), M^3\right)\cong \Hom_{A_{g_3, m}(V)}\left(\mathcal{A}_{g_3, g_2, m, m}(M^1)\otimes_{A_{g_2, m}(V)}M^2(m), M^3(m)\right).
     \end{equation}
     This completes the proof.
 \end{proof}

\section{Appendix}\label{sec:Appendix}
In this section, we prove some identities used in previous sections. By $\overline{n}$ for $n\in \Z$, we mean the residue of $n$ modulo $T$ for a fixed $T\in \N$. Also recall that $\Tilde{x}=\overline{Tx}$ for $x\in \frac{1}{T}\Z$.
\begin{lemma}\label{lem:delta-number-property}
     For any $i,r, x\in \Z$, we have
\begin{equation}
    \begin{aligned}
        &\de_{i+x}(r+x)=\de_{i}(r),\\
        &\de_{i}(r-x)=\de_{x}(r-i),\\
        &\lf r+x \rf=\lf r \rf+\lf x \rf+\de_{\Tilde{r}}(T-\Tilde{x}).
    \end{aligned}
\end{equation}
\begin{proof}
    The proof is straightforward.
\end{proof}
\end{lemma}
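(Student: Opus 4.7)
The plan is to verify each of the three identities directly from the definitions, with no machinery beyond the defining inequality $\delta_{i}(r)=1 \iff i\geq r$ and the standard decomposition $y=\lfloor y\rfloor + \tilde{y}/T$ for $y\in \frac{1}{T}\Z$.

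For identity (1), I would simply observe that the inequality $i+x\geq r+x$ is equivalent to $i\geq r$, so both Boolean values $\delta_{i+x}(r+x)$ and $\delta_{i}(r)$ agree. For identity (2), I would rearrange the inequality $i\geq r-x$ to $x\geq r-i$, giving the equality of the two $\delta$-values. Both of these are essentially one-line checks.

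The only identity with any content is (3). First I would write $r=\lfloor r\rfloor+\tilde{r}/T$ and $x=\lfloor x\rfloor+\tilde{x}/T$, with $0\leq \tilde{r},\tilde{x}\leq T-1$, and combine to get
\[
r+x=\lfloor r\rfloor+\lfloor x\rfloor+\frac{\tilde{r}+\tilde{x}}{T}.
\]
Since $0\leq \tilde{r}+\tilde{x}\leq 2T-2$, the fractional correction $\lfloor (\tilde{r}+\tilde{x})/T\rfloor$ is either $0$ or $1$, and equals $1$ exactly when $\tilde{r}+\tilde{x}\geq T$, equivalently $\tilde{r}\geq T-\tilde{x}$, which is precisely $\delta_{\tilde{r}}(T-\tilde{x})=1$. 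Taking floors of both sides of the displayed equation then yields the claim.

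The only subtlety worth noting is the boundary case $\tilde{x}=0$: then $T-\tilde{x}=T>\tilde{r}$, so $\delta_{\tilde{r}}(T-\tilde{x})=0$, which is consistent with the fact that $\tilde{r}+\tilde{x}=\tilde{r}<T$ in this case. So there is no real obstacle; the lemma is a routine bookkeeping statement that will be used repeatedly in the index manipulations for the $\lambda_t(x,r)$-formulas of the preceding sections.
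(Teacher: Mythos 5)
Your proof is correct, and since the paper simply asserts that the proof is straightforward, you have done exactly what was left to the reader: (1) and (2) are immediate from the definition of $\delta$, and (3) follows from the decomposition $y=\lf y\rf+\tilde{y}/T$ exactly as you describe. One small point worth flagging: for (3) to be nontrivial you correctly (and silently) read $r,x$ as ranging over $\frac{1}{T}\Z$ rather than $\Z$ as the lemma's hypothesis literally states — with $r,x\in\Z$ one has $\tilde{r}=\tilde{x}=0$ and (3) degenerates to $r+x=r+x$ — and indeed this is how the lemma is invoked later (e.g.\ in Lemma \ref{lem:for-comutativity-of-F} with $z+x,\,q_1-x\in\frac{1}{T}\Z$), so the hypothesis should really be $i\in\Z$, $r,x\in\frac{1}{T}\Z$.
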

\begin{lemma}\label{lem: for-simplifying-right-action}
    Suppose $m, p, n, j_1, j_2, t$ are a set of integers and assume that $j_1, j_2, t$ lies between 0 and $T-1$. Suppose $\overline{\Tilde{m}-\Tilde{p}-j_2}=0$. Let $r=\overline{t+\Tilde{p}-\Tilde{n}-j_1}$ and $j_3^{\vee}=\overline{-j_1-j_2}$. Then 
    \begin{equation}\label{eq:goal}
        \la_{t}(m, r)+n-\frac{t}{T}=\la(n, j_3^{\vee})+m.
    \end{equation}
\end{lemma}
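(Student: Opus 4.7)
The plan is to unpack the definitions of $\la_t$ and $\la$ in \eqref{eq:goal}, cancel common terms, and reduce the statement to a purely combinatorial identity in the residues $t,\Tilde m,\Tilde n,j_3^{\vee}$ and $r$, which can then be verified by a short case analysis.

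First I would substitute $m=\lf m\rf+\Tilde m/T$ and $n=\lf n\rf+\Tilde n/T$ into \eqref{eq:goal}, so that the floor parts $\lf m\rf$ and $\lf n\rf$ cancel between the two sides. After this cancellation, \eqref{eq:goal} is equivalent to
\[
\de_{t+\Tilde m}(r)+\de_{t+\Tilde m-T}(r)+\frac{r+\Tilde n-t-j_3^{\vee}-\Tilde m}{T}=\de_{\Tilde n}(j_3^{\vee}).
\]

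Next I would bring in the hypotheses. From $\overline{\Tilde m-\Tilde p-j_2}=0$ one has $\Tilde p\equiv \Tilde m-j_2\pmod T$, and since $j_3^{\vee}\equiv -j_1-j_2\pmod T$, the definition $r=\overline{t+\Tilde p-\Tilde n-j_1}$ gives $r\equiv t+\Tilde m-\Tilde n+j_3^{\vee}\pmod T$. Writing $r=t+\Tilde m-\Tilde n+j_3^{\vee}-kT$ for the unique integer $k$ making $0\le r\le T-1$, the fractional term in the displayed identity collapses to $-k$, and the identity to prove becomes the integer equation
\[
\de_{s}(r)+\de_{s-T}(r)=\de_{\Tilde n}(j_3^{\vee})+k,\qquad s:=t+\Tilde m.
\]

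The last step is a finite case analysis. Since $t,\Tilde m,\Tilde n,j_3^{\vee}\in[0,T-1]$ and $s\in[0,2T-2]$, the integer $k$ lies in $\{-1,0,1,2\}$. I would first split on the sign of $\Tilde n-j_3^{\vee}$, which determines $\de_{\Tilde n}(j_3^{\vee})$, and then within each branch on which interval of length $T$ the quantity $s-\Tilde n+j_3^{\vee}$ lies in, which determines $k$. In each of the resulting subcases, checking that $r\le s$ and $r\le s-T$ become simple linear inequalities in $\Tilde n-j_3^{\vee}$ and $T$, so both $\de$-symbols on the left become explicit constants matching $\de_{\Tilde n}(j_3^{\vee})+k$. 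I do not expect any conceptual difficulty; the only mild risk is clerical, namely the boundary case $\Tilde n=j_3^{\vee}$ together with $k=1$, where $r=s-T$ exactly and one must use the non-strict inequality in the definition of $\de_{s-T}(r)$. That case I would handle separately before collecting the remaining subcases.
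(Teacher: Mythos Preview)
Your proposal is correct and follows essentially the same route as the paper's proof: expand $\la_t$ and $\la$, cancel the $\lf m\rf$ and $\lf n\rf$ terms, use the hypothesis $\overline{\Tilde m-\Tilde p-j_2}=0$ to get $r\equiv t+\Tilde m-\Tilde n+j_3^{\vee}\pmod T$, and finish with a short case analysis on the integer $k$ (the paper's $s=-k$) and on whether $\Tilde n\ge j_3^{\vee}$. The only cosmetic difference is that the paper first applies the shift identity $\de_{i+x}(r+x)=\de_i(r)$ from Lemma~\ref{lem:delta-number-property} to rewrite $\de_{t+\Tilde m}(r)$ and $\de_{t+\Tilde m-T}(r)$ as $\de_{\Tilde n}(sT+j_3^{\vee})$ and $\de_{\Tilde n}((s+1)T+j_3^{\vee})$ before doing the case check, which makes the inequalities slightly more transparent; your version with $\de_s(r)$ and $\de_{s-T}(r)$ is equivalent.
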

\begin{proof}
    Since $\overline{\Tilde{m}-\Tilde{p}-j_2}=0$, 
    \begin{equation}
        r=\overline{t+\Tilde{m}-\Tilde{n}-j_1-j_2}=\overline{t+\Tilde{m}-\Tilde{n}+j_3^{\vee}}.
    \end{equation}
To show \eqref{eq:goal}, it suffices to prove 
\begin{equation}
    \begin{aligned}
        \MoveEqLeft
        \de_{t+\Tilde{m}}(\overline{t+\Tilde{m}-\Tilde{n}+j_3^{\vee}})+\de_{t+\Tilde{m}-T}(\overline{t+\Tilde{m}-\Tilde{n}+j_3^{\vee}})+\frac{\overline{t+\Tilde{m}-\Tilde{n}+j_3^{\vee}}+\Tilde{n}-t}{T}\\
        &=\de_{\Tilde{n}}(j_3^{\vee})+\frac{\Tilde{m}+j_3^{\vee}}{T}.
    \end{aligned}
\end{equation}
Assume 
\begin{equation}
    \overline{t+\Tilde{m}-\Tilde{n}+j_3^{\vee}}=sT+t+\Tilde{m}-\Tilde{n}+j_3^{\vee},
\end{equation}
where $s$ can only take value in the set $\{-2, -1, 0, 1\}$. Note that, by Lemma \ref{lem:delta-number-property},
\begin{equation}
    \de_{t+\Tilde{m}}(sT+t+\Tilde{m}-\Tilde{n}+j_3^{\vee})=\de_{\Tilde{n}}(sT+j_3^{\vee}),
\end{equation}
and
\begin{equation}
    \de_{t+\Tilde{m}-T}(sT+t+\Tilde{m}-\Tilde{n}+j_3^{\vee})=\de_{\Tilde{n}}\left((s+1)T+j_3^{\vee}\right).
\end{equation}
It remains to show that
\begin{equation}\label{eq:goal-equivalent}
    \de_{\Tilde{n}}(sT+j_3^{\vee})+\de_{\Tilde{n}}\left((s+1)T+j_3^{\vee}\right)+s=\de_{\Tilde{n}}(j_3^{\vee}).
\end{equation}
Note that $t, \Tilde{m}, \Tilde{n}, j_3^{\vee}$ lie between $0$ and $T$. So when $\Tilde{n}\geq j_3^{\vee}$, $s$ can only take value in the set $\{-1, 0, 1\}$; when $\Tilde{n}<j_3^{\vee}$, $s$ can only take value in the set $\{-2, -1, 0\}$. Now \eqref{eq:goal-equivalent} follows from a direct case-by-case check.  
\end{proof}

\begin{lemma}\label{lem:for-simplifying-circ-product}
    Suppose $m, n, j_3^{\vee}, r, t$ are a set of integers and assume that $j_3^{\vee}, r, t$ lies between 0 and $T-1$. Suppose $\overline{\Tilde{m}-\Tilde{n}+t+j_3^{\vee}-r}=0$. Then
    \begin{equation}\label{eq:for-simplifying-circ-product}
        \la_{t}(m, r)+n-m-\frac{t}{T}=\la(n, j_3^{\vee}).
    \end{equation}
\end{lemma}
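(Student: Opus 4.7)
The plan is to substitute the definitions of $\la_{t}$ and $\la$ directly and reduce the identity to a small finite case-by-case check, exactly in the spirit of Lemma \ref{lem: for-simplifying-right-action}. First, using $m = \lf m \rf + \tfrac{\Tilde{m}}{T}$ and $n = \lf n \rf + \tfrac{\Tilde{n}}{T}$, the floor parts of both sides match up to $\lf n \rf - 1$, so the claim \eqref{eq:for-simplifying-circ-product} becomes the purely combinatorial identity
\[
\de_{t+\Tilde{m}}(r) + \de_{t+\Tilde{m}-T}(r) + \frac{r+\Tilde{n}-\Tilde{m}-t}{T} \;=\; \de_{\Tilde{n}}(j_3^{\vee}) + \frac{j_3^{\vee}}{T}.
\]

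Next I would use the hypothesis $\overline{\Tilde{m}-\Tilde{n}+t+j_3^{\vee}-r}=0$ to write $\Tilde{m}-\Tilde{n}+t+j_3^{\vee}-r = sT$ for a unique integer $s$. Since $\Tilde{m},t,j_3^{\vee} \in [0,T)$ and $\Tilde{n},r \in [0,T)$, the combination $\Tilde{m}+t+j_3^{\vee}-\Tilde{n}-r$ lies in $(-2T,\,3T)$, so $s\in\{-1,0,1,2\}$. Substituting, the fractional term becomes $\tfrac{j_3^{\vee}}{T} - s$, and the identity to prove collapses to
\[
\de_{t+\Tilde{m}}(r) + \de_{t+\Tilde{m}-T}(r) - s \;=\; \de_{\Tilde{n}}(j_3^{\vee}).
\]

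Finally I would run a short case analysis on $s$. Using $t+\Tilde{m} = \Tilde{n} + r - j_3^{\vee} + sT$ and Lemma \ref{lem:delta-number-property}, one checks: for $s=-1$ both $\delta$'s vanish and the constraint $t+\Tilde{m}\geq 0$ forces $\Tilde{n}\geq j_3^{\vee}$; for $s=0$ the first $\delta$ equals $\de_{\Tilde{n}}(j_3^{\vee})$ and the second vanishes; for $s=1$ the first $\delta$ equals $1$ and the second equals $\de_{\Tilde{n}}(j_3^{\vee})$; for $s=2$ both $\delta$'s equal $1$ and the range constraint $t+\Tilde{m}\leq 2(T-1)$ forces $j_3^{\vee}>\Tilde{n}$. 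In every case both sides agree. The only mildly subtle point—and the one place worth being careful—is recognizing which values of $s$ are actually realizable, which comes down to keeping track of the two-sided range of $t+\Tilde{m}$; once that is in hand the verification is immediate.
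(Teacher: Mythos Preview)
Your proof is correct and follows essentially the same approach as the paper: expand the definitions, cancel the floor terms, use the congruence hypothesis to introduce an integer parameter $s$, and finish with a short case-by-case check. The only cosmetic difference is that your $s$ is the negative of the paper's (the paper writes $r=sT+\Tilde{m}-\Tilde{n}+t+j_3^{\vee}$ with $s\in\{-2,-1,0,1\}$), and you give more explicit detail in each case than the paper, which simply asserts the case check goes through.
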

\begin{proof} 
Note that $r=sT+\Tilde{m}-\Tilde{n}+t+j_3^{\vee}$, where $s$ take value in the set $\{-2, -1, 0, 1\}$. Thus
    \begin{align*}
        \MoveEqLeft
        \la_{t}(m, r)+n-m-\frac{t}{T}\\
        &=-1+\lf m \rf+\de_{t+\Tilde{m}}(r)+\de_{t+\Tilde{m}-T}(r)+n-m+\frac{r-t}{T}\\
        &=-1+\lf n \rf+\de_{t+\Tilde{m}}(r)+\de_{t+\Tilde{m}-T}(r)+\frac{r+\Tilde{n}-\Tilde{m}-t}{T}\\
        &=-1+\lf n \rf+\de_{\Tilde{n}}(sT+j_3^{\vee})+\de_{\Tilde{n}}\left((s+1)T+j_3^{\vee}\right)+\frac{sT+j_3^{\vee}}{T}.
    \end{align*}
    Note that $r, t, \Tilde{m}, \Tilde{n}, j_3^{\vee}$ lie between $0$ and $T$. So when $\Tilde{n}\geq j_3^{\vee}$, $s$ can only take value in the set $\{-1, 0, 1\}$; when $\Tilde{n}<j_3^{\vee}$, $s$ can only take value in the set $\{-2, -1, 0\}$. Now a case-by-case discussion shows that \eqref{eq:for-simplifying-circ-product} holds.
\end{proof}
\begin{lemma}\label{lem:for-comutativity-of-F}
    Suppose $z\in \frac{1}{T}\Z$, $x\in \Z-\frac{j_2}{T}$, $q_1\in \Z+\frac{j_1}{T}$, and $j_3^{\vee}=\overline{-j_1-j_2}$. Then
    \begin{equation}
        \la(z+x, j_3^{\vee})-x+q_1=\lf z+q_1\rf.
    \end{equation}
\end{lemma}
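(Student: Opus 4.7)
The plan is to substitute the definition of $\lambda$, use the floor formula from Lemma~\ref{lem:delta-number-property} to rewrite the floors of $z+x$ and $z+q_1$ in terms of $\lfloor z\rfloor$ plus $\delta$-corrections, and then reduce the claim to a purely combinatorial identity among the residues $\tilde z,\ \tilde x,\ j_1,\ j_3^\vee$ modulo $T$. First I would record that, because $x\in \Z-\frac{j_2}{T}$ and $q_1\in \Z+\frac{j_1}{T}$, we have $\tilde x=\overline{-j_2}$, $\tilde{q_1}=j_1$, $x-\lfloor x\rfloor=\tilde x/T$, and $q_1-\lfloor q_1\rfloor = j_1/T$. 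Applying Lemma~\ref{lem:delta-number-property} to both $\lfloor z+x\rfloor$ and $\lfloor z+q_1\rfloor$ reduces the desired equality to
\[
-1+\delta_{\tilde z}(T-\tilde x)+\delta_{\widetilde{z+x}}(j_3^\vee)+\frac{j_3^\vee-\tilde x+j_1}{T}=\delta_{\tilde z}(T-j_1).
\]

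The second step is the key structural observation: from $j_3^\vee=\overline{-j_1-j_2}=\overline{\tilde x-j_1}$, one deduces that $j_3^\vee+j_1-\tilde x\in\{0,T\}$, with value $0$ when $\tilde x\geq j_1$ and value $T$ when $\tilde x<j_1$. Thus the fractional contribution $\epsilon:=\frac{j_3^\vee-\tilde x+j_1}{T}$ equals $1-\delta_{\tilde x}(j_1)$. Substituting, the target identity becomes
\[
\delta_{\tilde z}(T-\tilde x)+\delta_{\widetilde{z+x}}(j_3^\vee)-\delta_{\tilde x}(j_1)=\delta_{\tilde z}(T-j_1),
\]
which only involves $\{0,1\}$-valued indicators and the residues $\tilde z,\tilde x,j_1$ together with $\widetilde{z+x}=\overline{\tilde z+\tilde x}$ and $j_3^\vee=\overline{\tilde x-j_1}$.

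The third step is to verify this final identity by splitting on the two dichotomies $\tilde x\gtreqless j_1$ (which controls whether $\widetilde{\tilde x-j_1}$ picks up a $+T$) and $\tilde z+\tilde x\gtreqless T$ (which controls $\widetilde{z+x}$ and $\delta_{\tilde z}(T-\tilde x)$), and in each of the resulting four branches comparing $\widetilde{z+x}$ with $j_3^\vee$ and $\tilde z$ with $T-j_1$. In every branch the four indicators combine to give the same value on both sides; for example, when $\tilde x\geq j_1$ and $\tilde z+\tilde x<T$, one gets $j_3^\vee=\tilde x-j_1$, $\widetilde{z+x}=\tilde z+\tilde x$, and the identity collapses to $[\tilde z+\tilde x\geq\tilde x-j_1]-1=[\tilde z\geq T-j_1]-1$, which is clear since $\tilde z+\tilde x\geq\tilde x-j_1$ automatically and $\tilde z<T-j_1$ in this range. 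The remaining three cases are analogous.

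The main obstacle is purely bookkeeping: one must be careful with the edge cases $j_1=0$, $j_2=0$, and $\tilde z=0$, where $\overline{-j_2}$ drops from $T-j_2$ to $0$ and several $\delta$-values jump simultaneously. A single unified formula in place of the case analysis does not seem to be available, but the case analysis itself is finite and direct once the identity has been reduced to the last displayed form.
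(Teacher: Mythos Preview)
Your argument is correct, but it takes a longer route than the paper's. You expand both $\lfloor z+x\rfloor$ and $\lfloor z+q_1\rfloor$ separately in terms of $\lfloor z\rfloor$ via Lemma~\ref{lem:delta-number-property}, which introduces three indicator functions $\delta_{\tilde z}(T-\tilde x)$, $\delta_{\widetilde{z+x}}(j_3^{\vee})$, $\delta_{\tilde z}(T-j_1)$ and forces a four-branch case split. The paper instead writes $\lfloor z+q_1\rfloor=\lfloor (z+x)+(q_1-x)\rfloor$ and applies Lemma~\ref{lem:delta-number-property} once, keeping $\lfloor z+x\rfloor$ intact on both sides. After using $q_1-x=\lfloor q_1-x\rfloor+\frac{\overline{j_1+j_2}}{T}$, everything cancels down to the single identity
\[
\delta_{\widetilde{z+x}}(T-\overline{j_1+j_2})=-1+\delta_{\widetilde{z+x}}(j_3^{\vee})+\frac{j_3^{\vee}+\overline{j_1+j_2}}{T},
\]
which is checked in two cases: if $\overline{j_1+j_2}=0$ then $j_3^{\vee}=0$ and both sides vanish, and if $\overline{j_1+j_2}\neq 0$ then $j_3^{\vee}=T-\overline{j_1+j_2}$ and the two $\delta$'s coincide. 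So the paper's decomposition eliminates $\tilde z$ from the residual identity entirely and avoids your four-case analysis. Your approach buys nothing extra here, though it does make the role of $\tilde x=\overline{-j_2}$ and the relation $j_3^{\vee}=\overline{\tilde x-j_1}$ explicit.

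One small slip: in your sample case the reduced identity should read $[\tilde z+\tilde x\geq\tilde x-j_1]-1=[\tilde z\geq T-j_1]$, not with $-1$ on both sides; your subsequent justification (``$\tilde z+\tilde x\geq\tilde x-j_1$ automatically and $\tilde z<T-j_1$ in this range'') is nonetheless correct and gives $0=0$.
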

\begin{proof}
    By Lemma \ref{lem:delta-number-property}, 
    \begin{equation}
        \lf z+q_1\rf=\lf z+x+q_1-x\rf=\lf z+x\rf+\lf q_1-x\rf+\de_{\widetilde{z+x}}(T-\overline{j_1+j_2}).
    \end{equation}
    While
    \begin{equation}
        \begin{aligned}
            \MoveEqLeft
            \la(z+x, j_3^{\vee})-x+q_1\\
            &=-1+\lf z+x\rf+\de_{\widetilde{z+x}}(j_3^{\vee})+\frac{j_3^{\vee}}{T}+q_1-x\\
            &=-1+\lf z+x\rf+\de_{\widetilde{z+x}}(j_3^{\vee})+\frac{j_3^{\vee}}{T}+\lf q_1-x\rf+\frac{\overline{j_1+j_2}}{T}.
        \end{aligned}
    \end{equation}
    So it suffices to show that
    \begin{equation}\label{eq:emm}
        \de_{\widetilde{z+x}}(T-\overline{j_1+j_2})=-1+\de_{\widetilde{z+x}}(j_3^{\vee})+\frac{j_3^{\vee}+\overline{j_1+j_2}}{T}.
    \end{equation}
    When $j_1+j_2=0$, we have $j_3^{\vee}=0$. Then both sides of \eqref{eq:emm} are 0; When $j_1+j_2\neq 0$, we have $j_3^{\vee}=T-\overline{j_1+j_2}$. Then \eqref{eq:emm} clearly holds. 
\end{proof}

\section*{Competing interests}
No competing interest is declared.

\bibliographystyle{alpha}
\bibliography{voa}
\end{document}